\definecolor{limegreen}{rgb}{0.196,0.804,0.196}
\definecolor{darkgreen}{rgb}{0.0,0.5,0.0}
\definecolor{darkbluegreen}{rgb}{0,0.3,0.6}
\definecolor{badgerred}{rgb}{0.715,0.004,0.004}
\newcommand{\R}{{\mathbb R}}
\newcommand{\N}{{\mathbb N}}
\newcommand{\cL}{{\mathcal L}}
\newcommand{\cI}{{\mathcal I}}
\newcommand{\cQ}{{\mathcal Q}}
\newcommand{\bU}{U}
\newcommand{\cM}{{\mathcal M}}
\newcommand{\cN}{{\mathcal N}}
\newcommand{\cR}{{\mathcal R}}
\newcommand{\cO}{{\mathcal O}}
\newcommand{\pd}{\partial}
\newtheorem{theorem}{Theorem}
\newtheorem{proposition}[theorem]{Proposition}
\newtheorem{lemma}[theorem]{Lemma}
\newtheorem{corollary}[theorem]{Corollary}
\theoremstyle{remark}
\newtheorem{claim}[theorem]{Claim}
\numberwithin{equation}{subsection}
\numberwithin{theorem}{subsection}
\title[Type II smoothing]{Type II smoothing in Mean curvature flow }
\author[Angenent]{Sigurd Angenent}
\address{Department of Mathematics, University of Wisconsin -- Madison}
\author[Daskalopoulos]{Panagiota Daskalopoulos}
\address{Department of Mathematics, Columbia University, New York}
\author[Sesum]{Natasa Sesum}
\address{Department of Mathematics, Rutgers University, New Jersey}
\thanks{
  P.~Daskalopoulos thanks the NSF for support in DMS-1266172.
  N.~Sesum thanks the NSF for support in DMS-1056387 and in DMS-1811833.
}
\begin{document}

\maketitle

\begin{abstract}

  In 1994 Velázquez~\cite{Velaz} constructed a smooth \(O(4)\times O(4)\) invariant
  Mean Curvature Flow that forms a type-II singularity at the origin in
  space-time.  Stolarski \cite{S} very recently showed that the mean curvature
  on this solution is uniformly bounded.  Earlier, Velázquez~\cite{AIV} also provided
  formal asymptotic expansions for a possible smooth continuation of the
  solution after the singularity. 
  
  Here we prove short time existence of Velázquez' formal continuation, and we
  verify that the mean curvature is also uniformly bounded on the continuation.
  Combined with the earlier results of Velázquez--Stolarski we therefore show
  that there exists a solution \(\{M_t^7\subset\R^8 \mid -t_0 <t<t_0\}\) that
  has an isolated singularity at the origin \(0\in\R^8\), and at \(t=0\);
  moreover, the mean curvature is uniformly bounded on this solution, even though the second fundamental form is unbounded near the singularity.

\end{abstract}

\section{Introduction} 

We say that a family of hypersurfaces \(\{M_t\}_{t\in [0,T)} \subset \mathbb{R}^{n+1}\)
moves by the mean curvature flow if
\begin{equation}
  \label{eq-mcf}
  \frac{\partial \vec F}{\partial t} = \vec H\tag{MCF}
\end{equation}
where \(\vec H(\cdot,t)\) is the mean curvature vector of the hypersurface
\(M_t\),  and \(\vec F(\cdot,t): M \to M_t \subset \mathbb{R}^{n+1}\) is a
smooth family of parametrizations of the moving hypersurface.  In the case of
closed hypersurfaces, Huisken showed the norm of the second fundamental form
blows up at finite time \(T < \infty\), that is
\[
  \limsup_{t\to T} \max_{M_t} |A|(\cdot,t) = \infty.
\]
Very often, even in a complete, noncompact setting, mean curvature flow
\eqref{eq-mcf} develops a singularity at a finite time \(T < \infty\).  It is
very natural to ask whether the mean curvature also needs to blow up at a finite
time singularity, or equivalently, whether a uniform bound on \(|\vec
H|\) for all \(t\in [0,T)\) guarantees the existence of smooth solution
past time \(T\).

For mean convex flows it is well known \cite{Hu} that the mean curvature bounds the
second fundamental form \(A\), i.e.~\(|A|/|\vec H|\) attains its maximum at \(t=0\) and therefore is uniformly bounded.
This implies that for mean convex flows the mean curvature is never bounded
near a singularity.  Dropping the assumption of mean convexity, it was shown in
\cite{LS1,LS2,LS3} by Lin-Sesum and Le-Sesum, and in \cite{XYZ} by Xu-Ye-Zhao
that for mean curvature flow of closed hypersurfaces the mean curvature needs
to blow up at the first singular time, given some extra assumptions, such as
having only Type I singularities or being close to a sphere in the \(L^2\)
sense.  More recently, in \cite{LW}, Li and Wang showed, using a quite involved
argument that in the case of closed surfaces in \(\mathbb{R}^3\) the mean
curvature always blows up at the first singular time.  
The question of boundedness of the mean curvature on a singular mean curvature flow is therefore completely settled in the case of compact surfaces in \(\R^3\), and a variety of extra assumptions for hypersurfaces in higher dimensions.

For \(n \ge 4\), in \cite{Velaz} Velázquez constructed \(N =
2n-1\)-dimensional, \(O(n)\times O(n)\) symmetric solutions that converge to
the Simons cone at parabolic scales around the singularity, and converge to a
smooth minimal surface desingularizing Simons cone at the scale at which the
norm of the second fundamental form blows up at the origin.  Using formal
asymptotic expansions Velázquez \cite{AIV} also suggested a way in which the
solution \(\{M_t\}\) might be continued smoothly after the singularity,
i.e.~for \(t>0\).    

\smallskip 
It was believed that these complete noncompact solutions should provide
examples of higher dimensional mean curvature flow with the property that the
mean curvature stays bounded at the first singular time.  In \cite{S} Stolarski
used precise asymptotics of these solutions together with sophisticated blow up
arguments to rigorously prove that this is indeed the case for \(t<0\), i.e.~he
showed that before the singularity forms the mean curvature on some of
Velázquez' solutions is uniformly bounded.  (To be precise: he requires the
parameter \(k\) that appears in Velázquez' solutions to be even and not less
than \(4\).)

\smallskip 
Here we consider the case \(n=4\), i.e.~the case of \(7\)-dimensional
hypersurfaces in \(\R^8\).  We first prove existence and regularity of
Velázquez' formal extension of the Velázquez--Stolarski solutions and we
thereby obtain a solution \(\{M_t\subset \R^8 \mid -t_0<t<t_0\}\) of MCF that
is smooth everywhere except at the origin \((0,0)\in\R^8\times(-t_0, t_0)\)
in space-time, and whose {\em mean curvature is uniformly bounded}  even though its
{\em second fundamental form blows up near}  \((0,0)\).  In particular, we show that
the singular hypersurface \(M_0=\lim_{t\nearrow 0} M_t\) that remains after the
Velázquez--Stolarski solution forms its singularity can be used as initial data
for MCF, and that at least one of the ensuing solutions has uniformly bounded
mean curvature.

\smallskip
In \cite{S} Stolarski indicates he expects his result to be true for closed
mean curvature flow that can be obtained by compactifying Velázquez examples,
but it still remains open.  Another question that remains completely open is
what happens in dimensions \(3 \le N \le 6\) where neither an example of a
singular solution with bounded mean curvature nor a theorem proving the
impossibility of such an example are known.

\subsection*{Acknowledgement} The authors would like to thank J.J.L.Velázquez
for helpful conversations about formal asymptotics and the construction of
solutions to MCF.

\subsection{Outline}
In this paper we consider an \(O(4)\times O(4)\) symmetric hypersurface \(M_0\)
defined by the profile function
\[
  u=u_0(x)
\]
where \(u_0:(0,\infty)\to\R\) is a smooth function, that near the origin satisfies 
\begin{equation}
  \label{eq-init-data1}
  u(x,0) = x + K_0 \, x^{2(k-1)} + o(x^{2(k-1)}) \qquad (x\searrow 0),
\end{equation}
for some integer 
\[
  k\geq 4
\] 
and some constant \(K_0 >0\).  
We will also assume that for all \(x>0\) one has
\begin{equation}
  \label{eq-init-data2}
  0 \leq u_0'(x) \leq C_0, \qquad  |u_{0}''(x)| \leq C_0,   \qquad 
  |u_{0}''(x)|\leq C_0\, x^{2k-4}
\end{equation}
for some constant \(C_0>0\).  The last assumption implies, after integration, that for all \(x>0\) one has
\begin{equation}
  \label{eq-init-data3}
  |u_{0}'(x) - 1| \leq C \, x^{2k-3}
\end{equation}
for some constant \(C>0\), depending on \(C_0\).  This implies that for \(x\) small enough we have \(u_0'(x) \ge \frac 12\).  By rescaling we may assume that
\begin{equation}
  \label{eq-der-below}
  u_0'(x) \ge c > 0, \qquad \mbox{for} \qquad x\in [0,1].
\end{equation}

It turns out that such a function \(u_0(x)\) is the profile near a singularity
\((0,0)\) of the \(O(4) \times O(4)\) MCF solution \(M_t\), \(- t_1 < t <0\), for some
small \(t_1 <0\), which was constructed by Velázquez in \cite{Velaz}.  It was
recently shown in \cite{S} that the Velázquez solution has bounded mean curvature at
the singularity, that is the mean curvature of \(M_t\) remains bounded as \(t \to
0^-\) near \((0,0)\). 

\smallskip 
Our goal in this paper is to show that  the MCF starting at \(M_0\) can be
continued for  \(0 < t < t_0\), for some \(t_0 >0\) small, with a smooth solution
\(M_t\), \(t \in (0, t_0)\) which is \(O(4) \times O(4)\) symmetric.  Furthermore, the
mean curvature of \(M_t\) as \(t \to 0^+\) will {\em remain uniformly bounded } despite the
fact that \(M_0\) is singular at \(x=0\). 

\smallskip 

The   solution \(M_t\) will be defined by a profile function \(u:(0,\infty)\times(0, t_0)\to(0,\infty)\), 
that satisfies the initial value problem 
\begin{subequations}
  \begin{gather}
    \label{eq-u-pde}
    u_t = \frac{u_{xx}}{1+u_x^2} + \frac 3x u_x - \frac 3u\\
    \label{eq-u-bcond}
    \lim_{x\to0}u_x(x, t) = 0 \\
    \label{eq-u-initcond}
    \lim_{t\to0}u(x, t) = u_0(x).
  \end{gather}
\end{subequations}
Note the condition \( \lim_{x\to0}u_x(x, t) = 0\) assures that \(u_0(x,t)\) defines a
\(O(4) \times O(4)\) hypersurface \(M_t\) that is smooth at the origin and hence
everywhere.

\smallskip

We will prove the following Theorem:

\subsection{Main Theorem}\itshape Assume that \(M_0\) is a
\(O(4)\times O(4)\) symmetric hypersurface defined by the profile function
\(u_0:[0,\infty)\to\R\) which is smooth for \(x >0\) and at \(x=0\) satisfies
condition \eqref{eq-init-data1},  for some \(k > 3\).   Then,
there exists \(t_0>0\) and a \(C^\infty\)-smooth \(O(4) \times O(4)\) symmetric MCF
solution \(M_t\), \(0 < t \leq t_0\) defined by a profile function
\(u:(0,\infty)\times(0, t_0]\to(0,\infty)\) which satisfies the initial value
problem \eqref{eq-u-pde}--\eqref{eq-u-initcond}.  Furthermore the mean curvature
\(H(x,t)\) of the hypersurface \(M_t\) satisfies
\[  
\sup_{(x,t) \in [0,1] \times (0,a]} |H(x,t) |< +\infty
\]
for some \(0 < a \leq t_0\), i.e., \(H(x,t)\) is uniformly bounded near the origin
as \(t \to 0^+\) despite the fact that the mean curvature of \(M_0\) is undefined
at the origin. 

\upshape\medskip

As a corollary of the Main Theorem and the results in \cite{S} we have the following result.

\begin{corollary} There exists a \(O(4)\times O(4)\) symmetric complete
  noncompact mean curvature flow solution \(\{M_t\}_{t\in (-t_0, t_0)}\), so
  that \(M_t\) is smooth for all \(t\in (-t_0, t_0)\backslash \{0\}\), has a Type
  II singularity at the origin, at time \(t = 0\), and has uniformly bounded mean
  curvature away from \(t = 0\).  More precisely, there exists a uniform constant
  \(C\) so that
  \[
    \sup_{\mathbb{R}\times (-t_0,t_0)\backslash\{0\}} |H(x,t)| \le C.
  \]
\end{corollary}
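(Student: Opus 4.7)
The plan is to concatenate three pieces: the Velázquez solution \(\{M_t^-\}_{t\in(-t_1,0)}\) from \cite{Velaz}, the mean curvature bound on this backward piece due to Stolarski \cite{S}, and the forward continuation \(\{M_t^+\}_{t\in(0,t_0]}\) furnished by the Main Theorem.

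First, I would recall from \cite{Velaz} that for \(n=4\) there exists a complete noncompact \(O(4)\times O(4)\)-symmetric MCF solution \(\{M_t^-\}_{t\in(-t_1,0)}\) whose profile function \(u^-(x,t)\) develops a Type II singularity at the space-time origin, and whose asymptotic expansion at \(t\nearrow 0\) yields a well-defined limiting profile \(u_0(x)\eqdef\lim_{t\nearrow 0} u^-(x,t)\). This \(u_0\) is smooth on \((0,\infty)\), and the asymptotics constructed in \cite{Velaz} show it satisfies the expansion \eqref{eq-init-data1} at \(x\searrow 0\) with an integer \(k\ge 4\) and some \(K_0>0\), together with the derivative bounds \eqref{eq-init-data2}, hence also \eqref{eq-init-data3} and \eqref{eq-der-below}. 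Let \(M_0\) denote the singular hypersurface with profile \(u_0\); by parabolic regularity \(M_t^-\to M_0\) smoothly on compact subsets of \(\R^8\setminus\{0\}\) as \(t\nearrow 0\). Stolarski's theorem \cite{S} supplies a constant \(C_-\) with \(\sup_{M_t^-}|H|\le C_-\) for all \(t\in(-t_1,0)\).

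Next, I would invoke the Main Theorem with this \(u_0\) as initial data. It produces \(t_0>0\), a smooth \(O(4)\times O(4)\)-symmetric MCF solution \(\{M_t^+\}_{t\in(0,t_0]}\), and constants \(a\in(0,t_0]\), \(C_+<\infty\) such that
\[
  \sup_{(x,t)\in[0,1]\times(0,a]}|H(x,t)|\le C_+.
\]
Away from the spatial origin the profile equation \eqref{eq-u-pde} is strictly parabolic with smooth initial data on \(\{x\ge\delta\}\), so standard interior parabolic estimates yield a uniform bound \(|H|\le C_+'(\delta)\) on \(\{x\ge\delta\}\times(0,t_0]\) for every \(\delta>0\); combining this with the bound on \([0,1]\times(0,a]\) gives a uniform bound \(|H|\le C_+''\) on the whole forward piece for \(t\in(0,a']\) after shrinking \(a\) if necessary.

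Finally, I would set
\[
  t_0'\eqdef \min(t_1,\,a),\qquad
  M_t\eqdef\begin{cases} M_t^- & t\in(-t_0',0),\\ M_0 & t=0,\\ M_t^+ & t\in(0,t_0').\end{cases}
\]
Smoothness of \(\{M_t\}\) on \((-t_0',t_0')\setminus\{0\}\) follows from \cite{Velaz}, the Main Theorem, and the smooth convergence \(M_t^\pm\to M_0\) on \(\R^8\setminus\{0\}\) as \(t\to 0^\mp\); the type of the singularity at the origin is Type II by \cite{Velaz}; and the uniform bound
\[
  \sup_{\R^8\times(-t_0',t_0')\setminus\{(0,0)\}}|H|\le\max(C_-,C_+'')\eqdef C
\]
is immediate from the two bounds on the backward and forward pieces. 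The only point requiring care, and in my view the one substantive (though essentially bookkeeping) step, is verifying that the limiting profile \(u_0\) of the Velázquez solution indeed satisfies the precise form \eqref{eq-init-data1}--\eqref{eq-init-data2} assumed in the Main Theorem; this is implicit in the matched asymptotic construction of \cite{Velaz} and used in \cite{S}, but must be read off explicitly to apply the theorem as stated.
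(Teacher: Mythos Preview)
Your proposal is correct and follows exactly the approach the paper intends: the Corollary is stated without proof as an immediate consequence of the Main Theorem together with the Velázquez construction \cite{Velaz} and Stolarski's bound \cite{S}, and you have correctly spelled out the concatenation. The paper asserts directly (just before the Main Theorem) that the limiting profile \(u_0\) of the Velázquez solution satisfies \eqref{eq-init-data1}--\eqref{eq-init-data2}, and your extension of the forward mean-curvature bound from \([0,1]\times(0,a]\) to the whole hypersurface is in fact proved in the paper as Theorem~\ref{thm-Hbounded}, so no additional argument is needed there.
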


The short time existence of a smooth MCF solution starting at \(M_0\) follows by
standard quasilinear parabolic PDE theory.  The challenge here is to establish the
{\em uniform bound}  on \(H(\cdot, t)\) near the singularity \((0,0)\).  For this purpose we
will construct sharp upper and lower barriers  which will capture the exact
behavior of the  profile function \(u(x,t)\) of our solution \(M_t\) as \((x,t)  \to
(0,0)\).  This will be done in section \ref{sec-barriers}.  In section
\ref{sec-existence} we will then construct the profile function  \(u(x,t)\), namely 
a solution of the initial boundary value problem \eqref{eq-u-pde}-\eqref{eq-u-initcond}.  The
boundary condition \(u_x(0,t)=0\) and the fact that \(u >0\) will guarantee that
\(u(x,t)\) defines a smooth MCF solution \(M_t\) which is  \(O(4)\times O(4)\)
symmetric.  In section \ref{sec-Hbounded} we will show that \(H(x,t)\) remains
bounded as \(t \to 0\) near the origin.  The barrier construction in section
\ref{sec-barriers} is based on the formal asymptotic expansion of the profile
solution \(u(x,t)\) as \((x,t) \to (0,0)\).  For the convenience of the reader we
will start by giving this expansion in the next section. 

\section{Formal asymptotic expansion of \(u(x,t)\)}
\label{sec-formal}
We start with Velázquez' construction in \cite{AIV} of a formal asymptotic
expansion of the profile solution \(u(x,t)\) for small \(t>0\).  This
construction motivates our choice of barriers in different regions later in
order to rigorously prove the existence of a mean curvature flow past the
singular time with the following properties.  Our solution before the
singularity at \(t = 0\) coincides with the Velázquez solution constructed in
\cite{Velaz}, it continues as a smooth solution for \(t \in (0, t_1)\), for some
\(t_1 > 0\), and has uniformly bounded mean curvature for all times \(t < 0\),
for which it exists, and all \(t\in (0,t_1)\).

\subsection{Outer variables}
We can approximate any smooth solution for small \(t>0\) by using the Taylor
expansion \(u(x, t) = u(x,0) + t \, u_t(x, 0) + o(t)\).  In view of the
PDE~\eqref{eq-u-pde} this implies that any solution \(u(x, t)\) must satisfy
\begin{equation}
  \label{eq-outer-approximation}
  u(x, t) = u_0(x) + 
  t\,\left\{\frac{u_0''(x)}{1+u_0'(x)^2} +\frac 3x u_0'(x) - \frac 3{u_0(x)}\right\}
  +o(t^2),\qquad (t\to0).
\end{equation}
We will see that under our assumptions \eqref{eq-init-data1}--\eqref{eq-init-data3}
on the initial data,  the expansion \eqref{eq-outer-approximation} holds if
\(x^2 \gg t\).  To describe possible solutions for \(x^2\sim t\) we introduce a new
set of coordinates, the intermediate variables.

\subsection{Intermediate variables}
\label{subsec-formal-intermed}
Consider the function \(v(y,\tau)\) defined by
\begin{equation}
  \label{eq-par-scaling}
  u(x, t) = \sqrt{t} \; v\left(\frac{x}{\sqrt t}, \log t\right).
\end{equation}
It satisfies
\begin{equation}\label{eq-v}
  v_\tau
  = \frac{v_{yy}}{1+v_y^2}
  + \Bigl(\frac 3y + \frac y2\Bigr) v_y
  -\frac v2 - \frac 3v.
\end{equation}
Assuming that \(v(y,\tau)\) is close to the cone, we set
\[
  v(y,\tau) = y +  f(y,\tau),
\]
and compute the equation for \(f\)
\begin{equation}
  \label{eq-f}
  f_\tau = \cL\,f + \cN[f],
\end{equation}
where \(\cL\) is the linear differential operator
\begin{equation}
  \label{eqn-defnL}
  \cL  f \stackrel{\rm def}{=}
  \frac 12  f_{yy} + \left(\frac 3y +\frac y2\right)  f_y 
  + \left(\frac{3}{y^2}- \frac 12 \right) f,
\end{equation}
and where
\begin{equation}
  \label{eqn-defnN}
  \cN[f] \stackrel{\rm def}{=} 
  -3\frac{f^2}{y^2(y+ f)} 
  -\frac{2+ f_y}{1 + (1 + f_y)^2} f_y f_{yy}
\end{equation}
collects the nonlinear terms in the equation for \(f\).  

If we assume that the nonlinear terms are much smaller than the linear terms
then \(f\) should be approximated by a solution of the linear equation \(f_\tau =
\cL f\).  The outer approximation \(u(x, t) = u_0(x) + \cO(t)\) together with the
assumption that the initial function satisfies \(u(x,0) = x + K_0 x^{2(k-1)} +
\cdots\) lead to
\begin{equation}\label{eq-outer-in-intermed}
  v(y, \tau) = y + K_0 e^{(k-\frac 32)\tau} y^{2(k-1)} + \cdots
\end{equation}
for \(y \gg  e^{-\tau/2}\).  
This prompts us to look for approximate solutions of the form 
\begin{equation}
  \label{eq-intermediate}
  v(y,\tau) = y + K_1 e^{(k-\frac 32)\tau} \varphi_k(y)
\end{equation}
where \(\varphi_k\) is a solution of the differential equation
\[
  \cL\varphi_k = \left(k-\frac 32\right)\varphi_k.
\]
It turns out that there are positive and convex solutions of this equation that
are defined for all  \(y>0\).  Their asymptotic behavior for small and large
values of \(y\) is given by
\[
  \varphi_k(y) = \frac{1+o(1)}{y^2} \quad(y\to0),\qquad
  \varphi_k(y) = \frac{1+o(1)}{(2k+1)!!}\, y^{2k-2} \quad (y\to\infty).
\]
In appendix \ref{sec-appendix-linear} we present some more details regarding the
eigenfunctions \(\varphi_k\).

This implies that our intermediate solution \(v(y,\tau)\) from
\eqref{eq-intermediate} is given by 
\[
  v(y,\tau) = y + K_1 e^{(k-\frac 32)\tau}\frac{y^{2(k-1)}}{(2k+1)!!} + \cdots
\]
when \(y\) is large\footnote{Notation: \((2k+1)!! =
1\cdot3\cdot5\cdots(2k-1)\cdot(2k+1)\)}.  Comparing with
\eqref{eq-outer-in-intermed} we see that \(K_0\) and \(K_1\) are related by
\begin{equation}\label{eq-K0-K1-relation}
  K_1 = K_0 \, (2k+1)!!.
\end{equation}

\subsection{Inner variables}
\label{subsec-inner-formal}
One can only expect the intermediate approximation to hold if the nonlinear
terms are small compared with the linear terms.  Since the linear terms are all
of order \(\sim f/y^2\) and the nonlinear terms are of order \(f^2/y^3\) we see that
the nonlinear terms are dominated by the linear terms if \(|f/y|\ll 1\).

When \(y\) is small we have \(f(y,\tau) \sim e^{-(k-3/2)\tau}y^{-2}\), so
\(|f/y|\ll 1\) holds if
\[
  e^{(k-\frac 32)\tau} y^{-3} \gg 1, \quad \text{ i.e. }
  y\ll e^{\left(\frac k3 - \frac 12\right) \tau} = e^{\gamma\tau}
\]
where we abbreviate
\[
  \gamma = \frac k3 -\frac 12.
\]
In the original \((x,t)\) coordinates we have \(y=e^{\gamma\tau}\) exactly if
\(x=t^{k/3}\).

This leads us to introduce the new variable
\[
  z=ye^{-\gamma \tau}  = x  t^{-k/3}
\]
and a new function \(w(z, \tau)\) defined by 
\begin{equation}\label{eqn-v100}
  v(y,\tau) = e^{\gamma\tau}\, w(ye^{-\gamma\tau}, \tau). 
\end{equation}
The equation \eqref{eq-v} is equivalent to 
\begin{equation}
  \label{eq-w}
  \frac{w_{zz}}{1+w_z^2} + \frac 3z w_z - \frac 3w =
  e^{2\gamma \tau}
  \left\{ w_\tau + \frac k3 (w - zw_z) \right\}. 
\end{equation}
For \(\tau\to-\infty\) we assume the terms on the right vanish so it is natural
to look for an approximate solution of the form 
\begin{equation}
  \label{eq-inner-approximation}
  w(z,\tau; K_2) = K_2 W\left(\frac {z}{K_2}\right) + \text{correction terms}
\end{equation}
where \(W(z)\) is Alencar's solution\footnote{Alencar considered
  \(SO(m)\times SO(m)\) invariant minimal surfaces of this type in \cite{Alencar},
  although he mostly considered the cases \(m=2,3\) in that first paper.  Velázquez
  dealt with the case \(m\geq 4\) in \cite{Velaz}, and later Alencar, Barros, Palmas,
  Reyes, and Santos gave a complete classification in \cite{Alencar2005}.} of the
minimal surface equation
\begin{equation}
  \label{eqn-Alencar}
  \frac{W''(z)}{1+W'(z)^2} + \frac{3}{z}W'(z) - \frac{3}{W(z)} = 0.
\end{equation}
By scaling invariance of the minimal surface equation, \(KW(z/K)\), with \(K>0\)
an arbitrary constant, is always a solution of \eqref{eqn-Alencar} if \(W\) is
one.  We choose \(W\) so that it is normalized by 
\begin{equation}\label{eqn-Was} 
  W(z) = z+\frac{1}{z^2} + o(z^{-2}) \qquad (z\to\infty).
\end{equation}
The matching condition for the inner solution \(w(z, \tau) = K_2 W(z/K_2) +
\cdots\) with the intermediate solution \(v(y, \tau) = y + K_1 e^{(k-\frac
32)\tau} \varphi_k(y)+\cdots\) is then
\[
  w(z, \tau) \approx e^{-\gamma\tau} v(e^{\gamma\tau}z, \tau),
\]
i.e.
\[
  z+ \frac{K_2^3}{z^2}  + \cdots = z + K_1 \, \frac{e^{(k-\frac 32)\tau}
  e^{-3\gamma\tau}}{z^2}  + \cdots
  = z + \frac{K_1}{z^2}  + \cdots.
\]
Hence the constants \(K_1\) and \(K_2\) are related by
\begin{equation}
  \label{eq:K-def}
  K_2^3 = K_1=K_0 \, (2k+1)!!
\end{equation}
and our approximate inner solution is given by
\[
  w(z, \tau) = K_1^{1/3} W\bigl(K_1^{-1/3}z\bigr).
\]

\section{Barriers}
\label{sec-barriers}
\subsection{The three regions}
\label{subsec-regions-const}
Our goal in this section is to construct upper and lower barriers for
\begin{equation}
  \tag{\ref{eq-u-pde}}
  u_t = \frac{u_{xx}}{1+u_x^2} + \frac 3x u_x - \frac 3u
\end{equation}
that are valid for all \(x \in (0,+\infty)\) and \(0 < t \leq  t_0\), for some small
enough \(t_0>0\).

To do this we modify the approximate solutions from Section~\ref{sec-formal} in
each of the three regions and glue the resulting locally defined barriers into
one set of globally defined upper and lower barriers.

First we define \emph{the three regions}.  In what follows we regard the three
regions as subsets of space time and use the different sets of coordinates \((x,
t)\), \((y,\tau)\), and \((z, \tau)\) on space time to describe them.

\begin{itemize}[leftmargin= 1em, rightmargin=1em, itemindent= 1em]

  \item For any given \(M>0\) we define the \emph{outer region} to be
    \[
      \cO_{M} = \{(x,t)\mid x \ge M\sqrt{t}, \,\,\,\, 0 < t < M^{-2}\}.
    \]
   {\em  We will assume that } \(M >1\). 
  \item For any \(R>0\) and \(\tau_*\in \R\) we define the \emph{intermediate region}
    to be
    \[
      \cM_{R,\tau_*} = 
      \left\{(y, \tau) \mid 
      R \, e^{\gamma\tau} \leq y \leq e^{-\tau/2}, 
      \tau\leq\tau_* 
      \right\}.
    \]
    Since \(y=x/\sqrt{t}= x\, e^{-\tau/2}\) the intermediate region is defined up to
    \(x=1\), hence the intermediate and outer regions clearly overlap.  

  \item Finally, we declare the \emph{inner region} to be
    \[
      \cI_{Z,\tau_*} = \left\{(z, \tau) \mid 0\leq z\leq Z, \tau\leq \tau_*\right\}.
    \]
    Since \(z=e^{-\gamma\tau}y\) we see that the intermediate and inner regions
    overlap if \(Z>R\).

\end{itemize} 

In section \ref{sec-existence} we will construct a nested sequence of barriers
\[
  u_{\delta_{n-1}}^- < u_{\delta_n}^- < u_{\delta_n}^+ < u_{\delta_{n-1}}^+,
\]
where \(\delta_n = 2^{-n} \, \delta_0\), for some \(\delta_0 >0\).  These barriers will be defined for all \(\tau \leq \tau_{\delta_n} \) where \(\tau_{\delta_n} \to -\infty \) as \(\delta_n \to 0\).  As a result we will see that we need to take \(Z=Z_{\delta_n}\) and \(\tau^*=\tau_{\delta_n}\) in the definitions of the intermediate and inner regions above.  In addition we will see that \(Z_{\delta_n} \to +\infty\) as \(\delta_n \to 0\).

\subsection{Fixing the parameters}
\label{sec-fix-parameters}
From here on we fix the parameters \(k > 3\) and \(K_0 >0\), and we let \(K_1\), \(K_2\) be
defined by \eqref{eq:K-def}.  In all our estimates \(c\) and \(C\) will be generic
constants that can depend \emph{only} on \(k, K_0, K_1\), and \(K_2\).
We use \(C\) in upper bounds, and \(c\) in lower bounds.

\subsection{Barriers in the outer region}
\label{subsec-barriers-outer}

\begin{lemma}
\label{lemma-outer}
For sufficiently large \(M>0\) the functions
\begin{equation}
  \label{eq-outer-upper-barrier}
  u^\pm(x,t) = u_0(x) \pm Mt\min\{1, x^{2k-4}\}
\end{equation}
are super-solution or sub-solution in the outer region \(\cO_M\).
\end{lemma}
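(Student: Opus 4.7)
The plan is to verify directly that the operator
\[
\cP[u] \eqdef u_t - \frac{u_{xx}}{1+u_x^2} - \frac 3x u_x + \frac 3u
\]
has the correct sign on \(u^\pm\) by splitting the outer region according to whether \(x\ge 1\) or \(M\sqrt t\le x\le 1\), i.e.\ according to which branch of \(\min\{1,x^{2k-4}\}\) is active. In both cases the idea is the same: compute \(\cP[u^\pm]=u^\pm_t+L[u^\pm]\), where \(L[u]=\cP[u]-u_t\), and compare to \(L[u_0]\); the size of \(u^\pm_t\) is designed precisely to overwhelm \(L[u_0]\) and the remainder \(L[u^\pm]-L[u_0]\) as soon as \(M\) is taken large.

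In the regime \(x\ge 1\) the perturbation is \(\eta=\pm Mt\), which is constant in \(x\), so \(u^\pm_x=u_0'\) and \(u^\pm_{xx}=u_0''\). The assumption \eqref{eq-init-data2} gives \(|u_0'|,|u_0''|\le C_0\), and monotonicity of \(u_0\) together with \eqref{eq-der-below} forces \(u_0(x)\ge u_0(1)\ge c\) for \(x\ge 1\); since \(Mt\le 1/M\), the denominator \(u_0\pm Mt\) is bounded below by \(c/2\) once \(M\) is large. Consequently \(|L[u^\pm]|\le C\) uniformly, and the sign of \(\cP[u^\pm]=\pm M+L[u^\pm]\) is that of \(\pm M\) once \(M>2C\).

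The more delicate regime is \(M\sqrt t\le x\le 1\), where \(\eta=\pm Mtx^{2k-4}\). The first step is to show \(|L[u_0](x)|\le C^* x^{2k-4}\) on \((0,1]\). This uses the asymptotic \eqref{eq-init-data1}, which by a Taylor expansion of \(1/u_0=\frac 1x(1-K_0x^{2k-3}+o(x^{2k-3}))\) gives
\[
\frac 3{u_0}-\frac 3x\, u_0' = -3K_0 x^{2k-4}+o(x^{2k-4})+O(x^{2k-4}),
\]
the \(O\)-term coming from \eqref{eq-init-data3}; the term \(-u_0''/(1+u_0'^2)\) is \(O(x^{2k-4})\) directly from \eqref{eq-init-data2}. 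The second step is to show that the correction \(L[u^\pm]-L[u_0]\) is even smaller in the outer region. Differentiating \(\eta\) produces \(\eta_x=\pm Mt(2k-4)x^{2k-5}\) and \(\eta_{xx}=\pm Mt(2k-4)(2k-5)x^{2k-6}\); all three correction contributions — the change in \(u_{xx}/(1+u_x^2)\), in \((3/x)u_x\), and in \(3/u\) (where one uses \(u_0(x)\ge cx\) coming from \(u_0(0)=0\) and \eqref{eq-der-below}) — are dominated by a constant multiple of \(Mtx^{2k-6}\). The outer-region inequality \(x\ge M\sqrt t\) rewrites as \(Mt\le x^2/M\), so \(Mt x^{2k-6}\le x^{2k-4}/M\), and hence \(|L[u^\pm]-L[u_0]|\le C^{**} x^{2k-4}/M\).

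Combining the two estimates gives
\[
\cP[u^+] \ge \bigl(M-C^*-C^{**}/M\bigr)x^{2k-4}\ge 0
\]
for \(M\) large enough, and symmetrically \(\cP[u^-]\le 0\). The main technical obstacle is the correction step: one must carefully track how each of the three nonlinear pieces responds to adding \(\eta\), and in particular exploit \(u_0\ge cx\) so that the perturbation of \(3/u\) is controlled by \(Mt/x^2\cdot x^{2k-4}\) rather than something larger. Once those bookkeeping bounds are in place, the smallness factor \(1/M\) built into the outer region is exactly what permits absorbing every correction into a single large constant \(M\).
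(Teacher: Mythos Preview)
Your proof is correct and follows essentially the same route as the paper's: both split at \(x=1\) and, for \(M\sqrt t\le x\le 1\), show the elliptic operator applied to \(u^\pm\) is \(O(x^{2k-4})\) with constant independent of \(M\), which \(u^\pm_t=\pm Mx^{2k-4}\) then dominates (the paper bounds \(|u^+-x|\), \(|u^+_x-1|\), \(|u^+_{xx}|\) directly rather than separating \(L[u_0]\) from the correction \(L[u^\pm]-L[u_0]\), but the underlying estimates are the same). One small point you omit: at \(x=1\) the function \(\min\{1,x^{2k-4}\}\) has a concave corner (its derivative drops from \(2k-4>0\) to \(0\)), so \(u^+\) has a concave corner and \(u^-\) a convex one, which the paper notes is the correct parity for the min/max to remain a weak super-/sub-solution.
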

\begin{proof}
We only consider the upper barrier \(u^+\).  Similar arguments apply to the lower
barrier.

When \(x>1\) we have \(u^+(x,t) = u_0(x)+Mt\) so that for \(t\in(0, M^{-2})\)
one has $ u^+(x,t) \geq \inf_{x\geq 1} u_0(x) =: c$. This implies
\[
  \left|\frac{u^+_{xx}}{1+(u^+_x)^2}+ \frac 3x u^+_x - \frac 3{u^+}\right| \leq C
\]
for all \(x\geq1\) and \(t\leq M^{-2}\).  Here \(C\) does not depend on \(M\).  On
the other hand \(u^+_t = M\), so for large enough \(M\) we get
\[
  u^+_t\geq\frac{u^+_{xx}}{1+(u^+_x)^2}+ \frac 3x u^+_x - \frac 3{u^+},
\]
i.e.~\(u^+\) is an upper barrier for \(x\geq 1\).

If \(x \geq M \sqrt{t}\) and \(x\leq 1\),  we have \(u^+(x,t) = u_0(x) + M t \, x^{2k-4}\), so that 
\[
  |u^+_{xx}|\leq |u_{0,xx}|+CMt \, x^{2k-6} \leq C\, x^{2k-4} + CMt\, x^{2k-6} \leq
  Cx^{2k-4}.
\]
Similar estimates hold for \(u^+_x-1\) and \(u^+(x,t)-x\), namely,
\[
  x^2|u^+_{xx}| + x|u^+_x-1| + |u^+-x| \leq C\, x^{2k-2}.
\]
Hence
\[
  \frac{|u^+_{xx}|}{1+(u^+_x)^2}\leq C \, x^{2k-4},
\]
and also
\[
  \left|\frac 3x u^+_x - \frac 3{u^+}\right| \leq \frac 3x |u^+_x-1| +
  3\frac{|u^+-x|}{xu^+} \leq Cx^{2k-4}.
\]
Together we get
\[
  \left| \frac{u^+_{xx}}{1+(u_x^+)^2}+ \frac 3x u^+_x - \frac 3{u^+} \right|\leq
  C\, x^{2k-4},
\]
where \(C\) does not depend on \(M\).
On the other hand,   \(u^+_t  = M \, x^{2k-4}\).
Hence,  it now follows that
\(u_0(x)+Mt\, x^{2k-4}\) is an upper barrier if \(M\) is large enough.

Finaly we observe that at  the point \(x=1\) the function \(u^+(x,t)\) has a concave corner, so that
\(u^+(x,t)= u_0(x) + Mt \, \min \{1, x^{2k-4}\}\) is indeed an upper barrier for all
\(x\geq M\sqrt{t}\), \(t<M^{-2}\).

Similar arguments show that \(u^-(x,t)= u_0(x) - Mt \, \min\{1,x^{2k-4}\}\) is a
lower barrier in the same region. The only difference is that one now uses for
$x >1$, \(t\in(0, M^{-2})\) the lower bound
$u^-(x,t) \geq \inf_{x \geq 1} u_0(x) - Mt \geq \frac 12 \, c$, for $M$ sufficiently
large, where $c:= \inf_{x \geq 1} u_0(x) $.

\end{proof}

\subsection{Barriers in the intermediate region}
\label{subsec-inter} We model the upper and lower barriers in the intermediate
region on the approximate solution \(v(y,\tau) = y + f(y,\tau)\) from
\S~\ref{subsec-formal-intermed}, where \(f\) is assumed to be a small function
that satisfies \eqref{eq-f}, i.e.~\(f_\tau = \cL f + \cN[f]\).  A function
\(f\) defines an upper barrier for \eqref{eq-f} in \(\cM_{R,\tau_*} \) if
\begin{equation} \label{eq-barrier-condition}
  f_\tau - \cL f \geq \cN[f]
\end{equation}
holds throughout \(\cM_{R,\tau_*} \).  For a lower barrier the reverse
inequality must hold.

It turns out that the approximate solution \(f_0(y,\tau) =
Ke^{3\gamma\tau}\varphi_k(y)\) is neither a sub- nor super-solution for any
choice of the constant \(K\).  To obtain barriers we therefore add a small
correction term \(f_1(y,\tau)\).  While the resulting function \(f_0(y, \tau) +
f_1(y, \tau)\) does provide a barrier, it does not match the barrier we
construct later in the inner region.  To remedy this we add a second correction
term \(f_2(y, \tau)\).  The resulting barriers \(f_0+f_1+f_2\) will contain a
small parameter \(\delta>0\).  By choosing \(\delta>0\) smaller we get more
accurate barriers, but we also have to reduce the time interval
\(-\infty<\tau\leq \tau_\delta\) on which they are defined.  In the end this
will allow us to prove convergence as \(\tau\to-\infty\) of the actual solution
that we construct using our barriers.

Our construction uses an auxiliary function \(g:(0,\infty)\to\R\), which is the
solution of the following boundary value problem:
\begin{equation}
  \label{eq-g}
  \left\{\;
  \begin{aligned}
    6\gamma g(y) - \cL g(y) &= y^{-7}+y^{4k-7} & (0<y<\infty),\\
    g(y) &= -\frac 13 y^{-5} + o\bigl(y^{-5}\bigr)  & (y\to 0),\\
    g(y) &= y^{4k-7} + o\bigl(y^{4k-7}\bigr) &(y\to\infty).
  \end{aligned}
  \right.
\end{equation}
The choice of forcing term in the equation for \(g\) above will become apparent
in what follows.  In \S~\ref{sec-g-exists} we prove:
\begin{lemma}\label{lem-g} The
equations \eqref{eq-g} have a unique solution \(g:(0,\infty)\to\R\).
\end{lemma}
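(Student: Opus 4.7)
The plan is to view \eqref{eq-g} as a linear second-order ODE with a regular singular point at $y=0$ and an irregular singular point at $y=\infty$, and to construct the solution by matching Frobenius-type expansions at the two endpoints, with uniqueness closed by a non-resonance argument using the spectral theory of $\cL$ developed in Appendix~\ref{sec-appendix-linear}.

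Writing the equation out gives
\[
-\tfrac12 g'' - \Bigl(\tfrac3y+\tfrac y2\Bigr) g' + \Bigl(6\gamma+\tfrac12-\tfrac{3}{y^2}\Bigr) g = y^{-7}+y^{4k-7}.
\]
Near $y=0$ the principal part is $-\tfrac12 g''-\tfrac3y g'-\tfrac3{y^2}g$, whose indicial equation $(p+2)(p+3)=0$ gives homogeneous modes that behave like $y^{-2}$ and $y^{-3}$. Substituting a formal series $g=\sum_{n\ge 0} c_n y^{-5+n}$ into the ODE and equating powers, the order $y^{-7}$ yields $-3c_0=1$, i.e.\ $c_0=-\tfrac13$, matching the prescribed boundary term. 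The recursion for the $c_n$ has vanishing leading coefficient at the resonant powers $y^{-3}$ and $y^{-2}$; consistency at the order $y^{-5}$ forces the inclusion of a $y^{-3}\log y$ correction with a determined coefficient, after which the coefficients at the two resonant powers remain as free parameters. Linear ODE continuation then produces a two-parameter family of genuine solutions of \eqref{eq-g} defined on $(0,\infty)$ and satisfying $g=-\tfrac13 y^{-5}+o(y^{-5})$ as $y\to 0$.

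Next I would analyze the behavior at $y=\infty$, where the dominant balance $(6\gamma+\tfrac12)g-\tfrac y2 g'\approx 0$ produces one polynomial-growth mode $y^{4k-5}$, while the second-derivative term contributes a second mode decaying like $e^{-y^2/2}y^{-2-4k}$. A polynomial ansatz against the forcing $y^{4k-7}$ gives a particular piece $g\sim y^{4k-7}$ whose coefficient is automatically $1/\bigl[(2k-\tfrac52)-\tfrac12(4k-7)\bigr]=1$, matching the required asymptotic. Consequently, any global solution admits an expansion $g=y^{4k-7}(1+o(1))+A\,y^{4k-5}+B\cdot(\text{exponential decay})$ as $y\to\infty$, in which $A$ and $B$ are linear functionals of the two Frobenius parameters at zero. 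The boundary condition $g=y^{4k-7}+o(y^{4k-7})$ is the single linear equation $A=0$, which trims the candidate solutions to a one-parameter family.

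The main obstacle — and the heart of the uniqueness proof — is closing the remaining gap by ruling out any nontrivial solution $h$ of the homogeneous problem $\cL h = 6\gamma h$ that decays exponentially at infinity. My plan is to invoke the self-adjointness of $\cL$ on the weighted Hilbert space $L^2\bigl((0,\infty),\,y^6 e^{y^2/2}\,dy\bigr)$, on which
\[
\cL g = \frac{1}{2\,y^6 e^{y^2/2}}\,\partial_y\!\bigl(y^6 e^{y^2/2}\,\partial_y g\bigr) + \Bigl(\tfrac{3}{y^2}-\tfrac12\Bigr) g,
\]
together with the identification of the discrete spectrum of $\cL$ on this space as the half-odd-integers $j-\tfrac32$ (for an appropriate range of integers $j$) furnished by the analysis of the eigenfunctions $\varphi_k$ in Appendix~\ref{sec-appendix-linear}. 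Any exponentially decaying homogeneous solution $h$ is square-integrable against the weight (both the $y^{-2}$ and $y^{-3}$ behaviors at zero are integrable against $y^6\,dy$), and hence would be an $L^2$-eigenfunction with eigenvalue $6\gamma=2k-3$; since $2k-3$ is never of the form $j-\tfrac32$ for an integer $j$, no such eigenfunction exists. This non-resonance forces the residual parameter to vanish, completing the uniqueness argument and the proof.
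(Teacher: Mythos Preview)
Your existence argument is essentially the paper's: Frobenius analysis at the regular singular point $y=0$ (indicial roots $-2,-3$, leading coefficient $-\tfrac13$, log correction), combined with asymptotic analysis at $y=\infty$ picking out the $y^{4k-7}$ particular piece and killing the $y^{4k-5}$ homogeneous mode. The paper secures the $y^{4k-7}$ behavior at infinity by sub/super solutions $y^{4k-7}\pm m\,y^{4k-9}$ rather than by the formal balance you wrote down; this buys an honest solution without justifying asymptotic expansions, but the content is the same.

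Your uniqueness argument, however, fails, and in fact uniqueness is false. The difference of two solutions of \eqref{eq-g} is $h=C\varphi_k^1+B\psi_k^1$ with $\varphi_k^1\sim y^{4k-5}$, $\psi_k^1\sim e^{-y^2/2}$ at infinity and $\varphi_k^1\sim y^{-2}$, $\psi_k^1\sim y^{-3}$ at zero. The condition at infinity forces $C=0$, but $B\psi_k^1=O(y^{-3})=o(y^{-5})$ satisfies the condition at zero for every $B$, so there is a full one-parameter family $g_B=g_{\infty p}+B\psi_k^1$ of solutions; the paper's proof ends precisely by exhibiting this family. The flaw in your spectral argument is that $\cL$ is \emph{limit-circle} at $y=0$: as you yourself note, both $y^{-2}$ and $y^{-3}$ are in $L^2(y^6e^{y^2/2}\,dy)$ near $0$, so membership in $L^2$ does not place $\psi_k^1$ in the domain of the particular self-adjoint extension whose eigenfunctions are the $\varphi_j$ with $y^{-2}$ behavior at zero. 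The ``non-resonance'' $2k-3\neq j-\tfrac32$ only rules out solutions with the $y^{-2}$ boundary behavior, not $\psi_k^1$. The word ``unique'' in the lemma statement is thus imprecise; only existence is proved (and only existence is used in the barrier construction).
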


Assuming  that Lemma \ref{lem-g} holds, we  look for barriers in the following family of functions,
\begin{equation}\label{eq-barriers-med-v}
  v_\delta^\pm(y,\tau) = y + f_\delta^\pm(y,\tau)
\end{equation}
where
\begin{equation}
  \label{eq-barriers-med}
  f_\delta^\pm(y, \tau)
  = f_0^\pm(y, \tau, \delta) \pm \left\{f_1(y, \tau) + f_2(y, \tau)\right\}
\end{equation}
and
\begin{equation}\label{eq-fj-defined}
  \begin{aligned}
    f_0^\pm(y,\tau, \delta) &= \bigl(K_1 \pm \delta\bigr) e^{3\gamma\tau}\varphi_k(y) \\
    f_1(y,\tau) &= BK_1^2 e^{6\gamma\tau}g(y) \\
    f_2(y,\tau) &= e^{(p+1)\gamma\tau} y^{-p}.
  \end{aligned}
\end{equation}
Here, as in \S~\ref{sec-fix-parameters}, we have \(K_1 = (2k+1)!!K_0\), while
\(B, \delta > 0\) and \(p\in(2, 3)\) are parameters.

\begin{proposition} \label{lem-intermed-barriers} There exist \(B_*\), \(R_*\),
  and \(\tau_*\) that only depend on \(k, K_0\) such that for all
  \(\delta\in(0,\frac{1}{2}K_1)\), \(p\in(2,3)\),  the functions
  \(f_\delta^\pm\) defined in \eqref{eq-barriers-med}--\eqref{eq-fj-defined}
  are upper and lower barriers in the intermediate region \(\cM_{R_*, \tau_*}\).
  It follows that the functions \( v^\pm_\delta \) defined in
  \eqref{eq-barriers-med-v} are upper and lower barriers for equation
  \eqref{eq-v} in \(\cM_{R_*,\tau_*}\).
\end{proposition}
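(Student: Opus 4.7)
The plan is to verify the barrier inequality $f_\tau - \cL f \geq \cN[f]$ (upper) or $\leq \cN[f]$ (lower) pointwise on $\cM_{R_*,\tau_*}$ for $f=f_\delta^\pm$, by computing $\partial_\tau-\cL$ exactly on each of the three summands and then controlling $\cN[f_\delta^\pm]$, so that taking $B$ large forces the linear output to dominate the nonlinear one.

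The linear parts are explicit. Since $\cL\varphi_k=3\gamma\,\varphi_k$ with $3\gamma=k-\tfrac32$, the leading piece contributes $\partial_\tau f_0^\pm - \cL f_0^\pm \equiv 0$. The defining equation of $g$ in Lemma~\ref{lem-g} yields
\[
\partial_\tau f_1 - \cL f_1 = B K_1^2 e^{6\gamma\tau}\bigl(y^{-7}+y^{4k-7}\bigr).
\]
A direct calculation using $\cL y^{-p} = \tfrac12(p-2)(p-3)y^{-p-2} - \tfrac12(p+1)y^{-p}$ gives
\[
\partial_\tau f_2 - \cL f_2 = e^{(p+1)\gamma\tau}\Bigl[\tfrac{k(p+1)}{3}\,y^{-p} + \tfrac{(3-p)(p-2)}{2}\,y^{-p-2}\Bigr],
\]
which is strictly positive for every $p\in(2,3)$.

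The core step is a uniform nonlinear estimate. Using the asymptotics $\varphi_k(y)\sim y^{-2}$ as $y\to 0$ and $\varphi_k(y)\sim y^{2k-2}/(2k+1)!!$ as $y\to\infty$, together with the bound $|f_{0,y}|\leq 2K_1/R^3$ on $y\geq R e^{\gamma\tau}$, the prefactor $(2+f_y)/(1+(1+f_y)^2)$ in $\cN$ stays uniformly bounded throughout $\cM_{R,\tau_*}$; near the top $y\sim e^{-\tau/2}$ the derivative $f_{0,y}$ is only $O(1)$ but $f_{0,yy}=O(e^{\tau/2})$, so the same control persists there. Substituting the asymptotics into both terms of $\cN$ yields
\[
|\cN[f_0^\pm]|\leq C_* K_1^2\, e^{6\gamma\tau}\bigl(y^{-7}+y^{4k-7}\bigr)
\]
throughout $\cM_{R,\tau_*}$, with $C_*$ depending only on $k$. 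The corrections $f_1,f_2$ enter $\cN$ only through cross terms and are subdominant, since $|f_1|/|f_0|$ and $|f_2|/|f_0|$ are of order $R^{-3}$ and $R^{2-p}$ respectively at $y\sim Re^{\gamma\tau}$, with faster decay elsewhere. Choosing $B_*\geq C_*$ then forces $\partial_\tau f_\delta^+ - \cL f_\delta^+ \geq \cN[f_\delta^+]$ and its lower-barrier analogue; passing from $f_\delta^\pm$ to $v_\delta^\pm=y+f_\delta^\pm$ as barriers for \eqref{eq-v} is immediate from the derivation of \eqref{eq-f}.

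I expect the main obstacle to be this uniform nonlinear estimate, where one must interpolate between the $y\to 0$ and $y\to\infty$ asymptotic regimes of $\varphi_k$ and $g$ and handle the non-small $f_y$ behaviour at $y\sim e^{-\tau/2}$. The design of the forcing term $y^{-7}+y^{4k-7}$ in the equation for $g$ is tailored to cancel precisely the leading nonlinear behaviour in each asymptotic regime; the inclusion of $f_2$, whose contribution to $\partial_\tau-\cL$ is automatically positive for $p\in(2,3)$, serves instead to arrange matching with the barriers that will be constructed in the inner region on $y=R_*e^{\gamma\tau}$ without violating the barrier inequality.
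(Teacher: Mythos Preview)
Your proposal follows essentially the paper's approach: compute $(\partial_\tau-\cL)$ on each summand exactly, bound $\cN[f_\delta^\pm]$ by a multiple of $e^{6\gamma\tau}(y^{-7}+y^{4k-7})$, and choose $B_*$ to dominate. The paper packages the nonlinear estimate via the seminorm $[f]_2=|f|+|yf_y|+|y^2f_{yy}|$, proving $|\cN[f]|\leq 3y^{-3}[f]_2^2$ whenever $f\geq0$, and then bounding $[f_\delta^\pm]_2\leq Ce^{3\gamma\tau}y^{-2}(1+y^{2k})$ directly for the full $f_\delta^\pm$ rather than treating $f_0$ first and the corrections as perturbations.

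There is one imprecision you should fix. Your claim that $|f_1|/|f_0|$ is of order $R^{-3}$ at $y\sim Re^{\gamma\tau}$ suppresses the factor of $B$: since $f_1=BK_1^2e^{6\gamma\tau}g(y)$, the ratio is actually $O(BR^{-3})$. This matters because you need the constant $C_*$ in the bound on $\cN[f_\delta^\pm]$ to be \emph{independent of $B$}, otherwise choosing $B_*$ in terms of $C_*$ is circular. The paper resolves this by coupling $R\geq\max\{1,B^{1/3}\}$ and $\tau\leq-\log B$, which forces $BR^{-3}\leq1$ and $Be^\tau\leq1$ and makes the $f_1$ contribution to $[f_\delta^\pm]_2$ subdominant with a $B$-free constant; only then does one set $B_*=C_*K_1^{-2}$ and take $R_*=R(B_*)$, $\tau_*=\tau(B_*)$. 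Your sketch can be closed the same way, but as written the order of choices is not clearly non-circular. You also did not verify $f_\delta^\pm>0$ in $\cM_{R_*,\tau_*}$, which the paper checks separately and which is needed to control the $f^2/\bigl(y^2(y+f)\bigr)$ term in $\cN$; this too follows once the coupling $R\gtrsim B^{1/3}$ is in place.
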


We begin with two lemmas that will simplify the proof of
Proposition~\ref{lem-intermed-barriers}.
\begin{lemma}\label{prop-Nf-pre-estimate}
Wherever \(f(y, \tau)\geq 0\) holds, one has
\[
  \big| \cN[f] \big|\leq \frac{3}{y^3} [f]_2^2,
\]
where, by definition, for any function \(F(y,\tau)\) we define
\begin{equation}\label{eqn-not2}
  [F]_2(y,\tau) := |F(y,\tau)| + |y F_y(y,\tau)| + |y^2 F_{yy}(y,\tau)| \,.
\end{equation}

\end{lemma}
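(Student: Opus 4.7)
The plan is a pointwise algebraic estimate.  I would bound the two terms in
\[
\cN[f] = -\frac{3f^2}{y^2(y+f)} - \frac{2+f_y}{1+(1+f_y)^2}\,f_y f_{yy}
\]
separately and then combine them.  For the first term, the hypothesis $f(y,\tau)\ge 0$ together with $y>0$ gives $y+f\ge y$, so
\[
\left|\frac{3 f^2}{y^2(y+f)}\right|\le \frac{3f^2}{y^3}.
\]

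For the second term I would first show that the rational coefficient is universally bounded.  Writing $A=1+f_y$, one has $|(2+f_y)/(1+(1+f_y)^2)| = |1+A|/(1+A^2)$, and the elementary inequality $(A-1)^2\ge 0$ rewrites as $(1+A)^2\le 2(1+A^2)$, which implies $|1+A|/(1+A^2)\le \sqrt{2}/\sqrt{1+A^2}\le \sqrt{2}$.  Since $|f_y f_{yy}| = |y f_y|\cdot |y^2 f_{yy}|/y^3$, the AM--GM inequality $|ab|\le \tfrac12(a^2+b^2)$ then gives
\[
\left|\frac{2+f_y}{1+(1+f_y)^2}\,f_y f_{yy}\right|\le \frac{\sqrt{2}}{2\,y^3}\bigl((yf_y)^2+(y^2 f_{yy})^2\bigr).
\]

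To combine the two bounds I use the fact that, because all three summands in $[f]_2$ are nonnegative, expanding the square and discarding the nonnegative cross terms gives
\[
[f]_2^2 = \bigl(|f|+|yf_y|+|y^2 f_{yy}|\bigr)^2 \;\ge\; f^2 + (yf_y)^2 + (y^2 f_{yy})^2.
\]
Since $\sqrt{2}/2<1\le 3$, this yields
\[
|\cN[f]|\;\le\;\frac{3}{y^3}\bigl(f^2+(yf_y)^2+(y^2 f_{yy})^2\bigr)\;\le\;\frac{3}{y^3}\,[f]_2^2,
\]
which is the claim.  I do not expect any real obstacle; the constant $3$ is dictated by the first term of $\cN[f]$, and the contribution of the second term is comfortably absorbed once the universal bound on $(1+A)/(1+A^2)$ is in place.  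The only bookkeeping point worth highlighting is the appearance of $y^3$ in the denominator of the second term, obtained by expressing $|f_y f_{yy}|$ in terms of the scaled quantities $|yf_y|$ and $|y^2 f_{yy}|$ that appear in the definition of $[f]_2$.
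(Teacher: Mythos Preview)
Your proof is correct and follows essentially the same approach as the paper: bound the first term of $\cN[f]$ using $y+f\ge y$, bound the rational coefficient in the second term by a universal constant (the paper gets $3/2$ via $|2+x|/(1+(1+x)^2)\le 1/(1+(1+x)^2)+|1+x|/(1+(1+x)^2)$, you get $\sqrt{2}$), and then rewrite $|f_y f_{yy}|$ in terms of the scaled quantities in $[f]_2$. The only cosmetic difference is that the paper keeps the product $|yf_y|\,|y^2 f_{yy}|$ and bounds it directly by a cross term in $[f]_2^2$, whereas you pass through AM--GM to a sum of squares; both routes land cleanly under $\tfrac{3}{y^3}[f]_2^2$.
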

\begin{proof}
Using \(2|1+x|\leq 1+(1+x)^2\) one finds for all \(x\in\R\)
\[
  \left|\frac{2+x}{{1+(1+x)^2}}\right|\leq
  \frac{1}{1+(1+x)^2} + \frac{|1+x|}{1+(1+x)^2}
  \leq\frac 32.
\]
Using \(f(y,\tau)\geq 0\) this implies 
\begin{align*}
  \big| \cN[f] \big|
  &= \left| \frac{-3f^2}{y^2(y+f)} - \frac{2+f_y}{1+(1+f_y)^2}f_yf_{yy}
  \right|\\
  &\leq 3\,\frac{f^2}{y^3} + \frac 32|f_y f_{yy}| \\
  &\leq \frac 3{y^3}\bigl\{f^2 + |yf_y|\,|y^2f_{yy}|\bigr\} \\ 
  & \leq \frac{3}{y^3}[f]_2^2 \,.
\end{align*}
\end{proof}

\begin{lemma}\label{prop-Nf-estimate}
For any \(B\) there exist \(R(B)>0\) and \(\tau(B) \in \R\) such that if \(0<\delta<\frac 12 K_1\), then \(f_\delta^\pm\) as defined in \eqref{eq-barriers-med}--\eqref{eq-fj-defined}, satisfies
\[
  f_\delta^\pm(y, \tau) > 0
\]
and
\[
  \big|\cN[f_\delta^\pm]\big| \leq C_* e^{6\gamma\tau} \bigl(y^{-7} +
  y^{4k-7}\bigr)
\]
in the intermediate region \(R(B)e^{\gamma\tau}\leq y\leq e^{-\tau/2}\),
\(\tau\leq \tau(B)\).  
\end{lemma}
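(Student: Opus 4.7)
My plan is to reduce the estimate on $\cN[f_\delta^\pm]$ to a pointwise bound on $[f_\delta^\pm]_2$.  I will show that in the intermediate region
\[
  [f_\delta^\pm]_2(y,\tau)\leq C_B\,e^{3\gamma\tau}\bigl(y^{-2}+y^{2k-2}\bigr).
\]
Once this is in hand, positivity $f_\delta^\pm\geq 0$ combined with Lemma~\ref{prop-Nf-pre-estimate} and $(a+b)^2\leq 2(a^2+b^2)$ immediately yields
\[
  |\cN[f_\delta^\pm]|\leq \frac{3[f_\delta^\pm]_2^2}{y^3}\leq C_*\,e^{6\gamma\tau}\bigl(y^{-7}+y^{4k-7}\bigr),
\]
which is the required estimate.

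To obtain the bound on $[f_\delta^\pm]_2$, the first step is to use the asymptotics of $\varphi_k$ and $g$, together with their smoothness on $(0,\infty)$, to derive the uniform termwise estimates $[\varphi_k]_2(y)\leq C(y^{-2}+y^{2k-2})$ and $[g]_2(y)\leq C(y^{-5}+y^{4k-7})$ for all $y>0$.  Plugging these into \eqref{eq-fj-defined} gives
\begin{align*}
  [f_0^\pm]_2 &\leq C\,e^{3\gamma\tau}(y^{-2}+y^{2k-2}),\\
  [f_1]_2 &\leq C_B\,e^{6\gamma\tau}(y^{-5}+y^{4k-7}),\\
  [f_2]_2 &\leq C\,e^{(p+1)\gamma\tau}y^{-p}.
\end{align*}
I will then absorb the contributions of $f_1$ and $f_2$ into the $f_0$ bound by exploiting the intermediate region constraints.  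For the small-$y$ part of $f_1$, write $e^{6\gamma\tau}y^{-5}=(e^{\gamma\tau}/y)^3\,e^{3\gamma\tau}y^{-2}$; the constraint $y\geq Re^{\gamma\tau}$ then provides an extra factor $R^{-3}$.  For the large-$y$ part a short exponent computation using $3\gamma=k-\frac{3}{2}$ converts $y\leq e^{-\tau/2}$ into $e^{6\gamma\tau}y^{4k-7}\leq e^{3\gamma\tau}y^{2k-2}$ for $\tau\leq 0$.  Similarly, $e^{(p+1)\gamma\tau}y^{-p}=(e^{\gamma\tau}/y)^{p-2}\,e^{3\gamma\tau}y^{-2}\leq R^{-(p-2)}e^{3\gamma\tau}y^{-2}$ handles $f_2$ since $p>2$.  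Taking $R=R(B)$ sufficiently large and $\tau(B)$ sufficiently negative then produces the desired bound on $[f_\delta^\pm]_2$.

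Positivity will follow from the same domination: $|f_1|+f_2\ll f_0^\pm$ in $\cM_{R,\tau_*}$, while $f_0^\pm=(K_1\pm\delta)e^{3\gamma\tau}\varphi_k\geq\frac{K_1}{2}e^{3\gamma\tau}\varphi_k>0$ since $\delta<\frac{K_1}{2}$ and $\varphi_k>0$.  Hence $f_\delta^\pm>0$ throughout the region, and Lemma~\ref{prop-Nf-pre-estimate} applies to complete the proof.

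The main obstacle I anticipate is converting the pointwise asymptotic statements $g(y)\sim -\frac{1}{3}y^{-5}$ at $y=0$, $g(y)\sim y^{4k-7}$ at $y=\infty$, and the analogous statements for $\varphi_k$, into the \emph{uniform} bounds on derivatives $[g]_2\leq C(y^{-5}+y^{4k-7})$ and $[\varphi_k]_2\leq C(y^{-2}+y^{2k-2})$ on all of $(0,\infty)$.  Boundedness on compact subintervals of $(0,\infty)$ follows from smoothness, but controlling the first and second derivatives near the endpoints requires differentiating the asymptotic expansions; this is tied to the ODE \eqref{eq-g} and the structure of $\cL$, with the corresponding analysis for $\varphi_k$ carried out in Appendix~\ref{sec-appendix-linear}.
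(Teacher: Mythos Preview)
Your approach is essentially identical to the paper's: bound each $[f_j]_2$ termwise, absorb the $f_1$ and $f_2$ contributions into the $f_0$ bound using $y\geq R(B)e^{\gamma\tau}$ and $\tau\leq\tau(B)$, deduce positivity from the same domination, and then invoke Lemma~\ref{prop-Nf-pre-estimate}.  One point to tighten: the constant $C_*$ must be \emph{independent of $B$}, since Proposition~\ref{lem-intermed-barriers} later fixes $B_*=C_*K_1^{-2}$ and a $B$-dependent $C_*$ would make that circular; your large-$y$ estimate ``$e^{6\gamma\tau}y^{4k-7}\leq e^{3\gamma\tau}y^{2k-2}$ for $\tau\leq 0$'' by itself only yields $[f_1]_2\leq CB\,e^{3\gamma\tau}y^{2k-2}$, so you need $\tau(B)\leq -\log(CB)$ (not merely $\tau\leq 0$) together with $R(B)\gtrsim B^{1/3}$ to make the absorbed constant $B$-free---your phrase ``$\tau(B)$ sufficiently negative'' does cover this, but the constant you wrote as $C_B$ in the target bound should then come out $B$-independent.
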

As promised in section~\ref{sec-fix-parameters}, the constant \(C_*\) only depends
on the constants \(k, K_0\) but not on \(B\).

\begin{proof}

Recall the notation from \eqref{eqn-not2}. The explicit expression \eqref{eq-phik} for \(\varphi_k\) implies
\[
  [\varphi_k]_2\leq C y^{-2}\bigl(1+y^{2k}\bigr),
\]
and the construction of the auxiliary function \(g\) implies
\[
  [g]_2 \leq C y^{-5}\left(1+y^{4k-2}\right).
\]
We also have for all \(y>0\)
\[
  \left[y^{-p}\right]_2 = y^{-p}+py^{-p}+p(p+1)y^{-p} =(p+1)^2 y^{-p} < 16
  y^{-p},
\]
because \(2<p<3\).  Hence the three terms \(f_j\) in \eqref{eq-fj-defined} that
add up to \(f_\delta^\pm\) satisfy
\begin{align*}
  [f_0]_2 & \leq C e^{3\gamma\tau}y^{-2}\bigl(1+y^{2k}\bigr)\\ 
  [f_1]_2 & \leq CB e^{6\gamma\tau} y^{-5} \bigl(1+y^{4k-2}\bigr) \\
  [f_2]_2 & \leq C e^{(p+1)\gamma\tau}y^{-p} ,
\end{align*}
assuming that \(0<\delta \leq \frac 12 K_1\).

If \(Re^{\gamma\tau}\leq y\leq e^{-\tau/2}\), then we can estimate
\(f_\delta^\pm\) as follows
\begin{align*}
  \left[f_\delta^\pm\right]_2
  &\leq  C \frac{e^{3\gamma\tau}}{y^2}\bigl(1+y^{2k}\bigr)  +
  CB  \frac{e^{6\gamma\tau}}{y^5}  \bigl(1+y^{4k-2}\bigr) +
  C \frac{e^{(p+1)\gamma\tau}}{y^{p}} 
  \\ 
  &\leq C  \frac{e^{3\gamma\tau}}{y^2} \bigl(1+y^{2k}\bigr)  
  \left\{ 1  + B \frac{e^{3\gamma\tau}}{ y^3}
  + B e^{3\gamma\tau}  y^{2k-5} 
  +  \frac{e^{(p-2)\gamma\tau}}{y^{p-2}}
  \right\}  \\ 
  &\leq C  \frac{e^{3\gamma\tau}}{y^2} \bigl(1+y^{2k}\bigr)
  \left\{1 + BR^{-3} + B  e^\tau + R^{-(p-2)}  \right\}, 
\end{align*}
where in estimating the third term in the bracket we used \(3\gamma = k-3/2\). 
Thus, if we require 
\begin{equation}
  \label{eq-RB-tauB-first-choice}
  R \geq \max\{ 1, B^{1/3}\} \text{ and } \tau \leq \tau(B) :=  - \log  B
\end{equation}
then \(1 + BR^{-3} + B  e^\tau + R^{-(p-2)}\leq 4 \) and so
\[
  [f_\delta^\pm]_2 \leq C e^{3\gamma\tau}y^{-2}\bigl(1+y^{2k}\bigr).
\]
Combined with Lemma~\ref{prop-Nf-pre-estimate} this yields
\[
  \big|\cN[f_\delta^\pm]\big|
  \leq \frac{3}{y^3} C e^{6\gamma\tau}y^{-4}\left(1+y^{2k}\right)^2
  \leq \tilde C e^{6\gamma\tau}y^{-7}
  \left(1+y^{4k}\right)
\]
in the intermediate region, provided that we verify \(f_\delta^\pm\geq 0\) when \(Re^{\gamma\tau}\leq y \leq e^{-\tau/2}\).

To prove \(f_\delta^\pm \geq 0\) in the intermediate region we recall the
assumption \(\delta<\frac 12 K_1\), which implies
\[
  f_\delta^\pm(y,\tau)
  \geq \frac{1}{2}K_1 e^{3\gamma\tau}\varphi_k(y)
  - \left\{BK_1^2 e^{6\gamma\tau}|g(y)| + e^{(p+1)\gamma\tau}y^{-p}\right\}. 
\]
Use the lower bound \(\varphi_k(y) \geq c y^{-2}(1+y^{2k})\), and the upper bound
\(|g(y)|\leq Cy^{-5}(1+y^{4k-2})\) to arrive at
\[
  f_\delta^\pm(y,\tau)
  \geq c \frac{e^{3\gamma\tau}}{y^{2}}\left(1+y^{2k}\right) 
  - \left\{CB \frac{e^{6\gamma\tau}}{y^{5}} \left(1+y^{4k-2}\right)  
  + \frac{e^{(p+1)\gamma\tau}}{y^{p}}\right\}  ,
\]
which, because \(\frac{1+xy}{1+x}\leq 1+y\) for all \(x,y\geq0\), implies
\[
  \frac {y^2e^{-3\gamma\tau}}{c\,(1+y^{2k})}f_\delta^\pm(y,\tau)
  \geq 1 - {CB}\frac{e^{3\gamma\tau}}{y^{3}}\left(1+y^{2k-2}\right)
  -\frac{1}{c(1+y^{2k})} \frac{e^{(p-2)\gamma\tau}}{y^{p-2}}.
\]
In the region \(Re^{\gamma\tau} \leq y \leq e^{-\tau/2}\) we get
\[
  \frac {y^2e^{-3\gamma\tau}}{ c\,(1+y^{2k})}f_\delta^\pm(y,\tau)
  \geq 1 - \frac{CB}{R^3} - CB  e^{\tau} -\frac{1}{cR^{p-2}}.
\]
We adjust our choice of \(R(B), \tau(B)\) in \eqref{eq-RB-tauB-first-choice} to
\begin{equation}
  \label{eq-RB-tauB-final-choice}
  R(B) = \tilde{C}\max\{1, B^{1/3}\}, \qquad \tau(B) = -\log(\tilde{C}B)
\end{equation}
for large enough \(\tilde{C}\geq1\).
Then, for \(y\geq R(B)\) and \(\tau\leq \tau(B)\), we have
\[
  \frac {2y^2e^{-3\gamma\tau}}{c(1+y^{2k})}f_\delta^\pm(y,\tau)
  \geq \frac 12 >0,
\]
and thus \(f_\delta^\pm(y, \tau)>0\).
\end{proof}

\begin{proof} [Proof of Proposition~\ref{lem-intermed-barriers}]
We consider the case of upper barriers, where we have
\begin{equation}
  \label{eq-lemma3.3-f}
  \bigl(\partial_\tau - \cL\bigr)f_{\delta}^+ = \bigl(\partial_\tau - \cL\bigr)f_0^+
  + \bigl(\partial_\tau - \cL\bigr)f_1 + \bigl(\partial_\tau - \cL\bigr)f_2.
\end{equation}
The first term vanishes because \(f_0^\pm\) is a solution of the linear equation \(f_\tau = \cL f\).  For the last term in \eqref{eq-lemma3.3-f} we note that for any \(r\in\R\) one has
\[
  \cL[y^r] = \frac 12(r+2)(r+3) y^{r-2} + \frac 12 (r-1) y^r.
\]
Hence, if \(p\in(2, 3)\) then \(\cL[y^{-p}] < 0\) for all \(y>0\).  It follows that
\[
  \bigl(\partial_\tau-\cL\bigr)f_2 > \partial_\tau f_2 = (p+1)\gamma f_2 >0.
\]
The middle term in \eqref{eq-lemma3.3-f} satisfies
\[
  (\partial_\tau-\cL)f_1 =BK_1^2e^{6\gamma\tau}\bigl(6\gamma g - \cL g\bigr)
  =BK_1^2e^{6\gamma\tau}\bigl(y^{-7} + y^{4k-7}\bigr).
\]

If we choose \(B_*=C_*K_1^{-2}\) where \(C_*\) is the constant from
Lemma~\ref{prop-Nf-estimate},  and if we set \(R_*=R(B_*)\), \(\tau_*=\tau(B_*)\) according to \eqref{eq-RB-tauB-final-choice},  then we clearly have
\(\bigl(\partial_\tau-\cL\bigr) f_\delta^+ > \cN[f_\delta^+]\) in the
intermediate region \(\cM_{R_*,\tau_*}\).

We conclude that \(f^+_\delta\) is an upper barrier, i.e.~equation
\eqref{eq-barrier-condition} holds.  With minor modifications this argument also shows
that \(f^-_\delta\) is a lower barrier.
\end{proof}

We next show that the barriers \(f^\pm_\delta\) form a nested sequence, in the
sense of the lemma below.  The nesting of barriers will allow us to construct a
solution that is bounded by all barriers at once and will   enable us to prove
the   convergence of our solution in the inner region  to the Alencar minimal
surface, as \(\tau \to -\infty\).
\begin{lemma}\label{lem-intermed-barrier-ordering} 
The constant \(R_*\) from Proposition~\ref{lem-intermed-barriers}
can be chosen so that 
\begin{equation} \label{eq-intermed-barrier-ordering}
  f_\delta^-(y, \tau)
  < f_{\delta/2}^-(y, \tau)
  < f_{\delta/2}^+(y, \tau)
  < f_\delta^+(y, \tau)
\end{equation}
for all \((y,\tau) \) with \(R_*e^{\gamma\tau}\leq y\).
\end{lemma}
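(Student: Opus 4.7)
The plan is to reduce the chain of four inequalities to the three consecutive differences $f_{\delta/2}^- - f_\delta^-$, $f_{\delta/2}^+ - f_{\delta/2}^-$, and $f_\delta^+ - f_{\delta/2}^+$, and to show each is strictly positive for $y \geq R_* e^{\gamma\tau}$, after possibly enlarging the $R_*$ produced by Proposition~\ref{lem-intermed-barriers}. The two outer differences simplify at once: reading off \eqref{eq-barriers-med}--\eqref{eq-fj-defined}, the $\pm(f_1+f_2)$ pieces cancel and each outer difference equals $\tfrac{\delta}{2}\,e^{3\gamma\tau}\varphi_k(y)$, which is positive because $\varphi_k > 0$ on $(0,\infty)$.

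The content of the lemma therefore lies in the middle difference
\[
  f_{\delta/2}^+(y,\tau) - f_{\delta/2}^-(y,\tau) \;=\; \delta\,e^{3\gamma\tau}\varphi_k(y) + 2\bigl(f_1(y,\tau)+f_2(y,\tau)\bigr).
\]
Since $\varphi_k>0$, my plan is to prove this is positive by establishing the stronger, $\delta$-free statement that $f_1+f_2 \geq 0$ throughout $y \geq R_* e^{\gamma\tau}$, once $R_*$ is chosen sufficiently large. The term $f_2=e^{(p+1)\gamma\tau}y^{-p}$ is always positive, while $f_1 = BK_1^2 e^{6\gamma\tau} g(y)$ has the sign of $g$. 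The asymptotics built into \eqref{eq-g} give $g(y)\sim -\tfrac{1}{3}y^{-5}$ as $y\to 0$ and $g(y)\sim y^{4k-7}$ as $y\to\infty$, so by continuity there is a finite $Y$ beyond which $g>0$; on $y>Y$ we already have $f_1\geq 0$, hence $f_1+f_2>0$ automatically.

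The real work is on the bounded slab $R_* e^{\gamma\tau} \leq y \leq Y$, where I plan to show that $f_2$ dominates $|f_1|$. Forming
\[
  \frac{|f_1(y,\tau)|}{f_2(y,\tau)} \;=\; BK_1^2\,e^{(5-p)\gamma\tau}\,y^p\,|g(y)|
\]
and applying $|g(y)|\leq C y^{-5}(1+y^{4k-2})$ (already invoked in Lemma~\ref{prop-Nf-estimate}), the ratio splits into two pieces of order $y^{p-5} e^{(5-p)\gamma\tau}$ and $y^{p+4k-7} e^{(5-p)\gamma\tau}$. Since $p-5<0$, the first piece is largest at the lower endpoint $y=R_* e^{\gamma\tau}$, where the powers of $e^{\gamma\tau}$ cancel exactly and only $R_*^{p-5}$ remains; the second piece is controlled by $y\leq Y$ combined with $R_* e^{\gamma\tau}\leq Y$. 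Both contributions are $O\bigl(R_*^{-(5-p)}\bigr)$, independent of $\delta$ and $\tau$, so enlarging $R_*$ brings $|f_1|/f_2<1$, which yields $f_1+f_2\geq 0$ and closes the argument.

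The central obstacle is the singular behavior of $g$ near zero: $|g|$ blows up like $y^{-5}$, much faster than $\varphi_k\sim y^{-2}$. If one tried to dominate $2|f_1|$ using only the $\delta\,e^{3\gamma\tau}\varphi_k$ term, the required $R_*$ would scale like $\delta^{-1/3}$, contradicting the lemma's requirement that $R_*$ be chosen independent of $\delta$. The correction $f_2$ with $p\in(2,3)$ is what rescues the estimate: although its singularity $y^{-p}$ is weaker than $|g|\sim y^{-5}$, it carries the smaller temporal factor $e^{(p+1)\gamma\tau}$ in place of $e^{6\gamma\tau}$, and it is precisely the balance between these two scalings that delivers the $\delta$-independent bound.
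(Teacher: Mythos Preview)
Your proof is correct and follows essentially the same route as the paper: the outer inequalities are immediate from $\varphi_k>0$, and the middle one reduces to showing $f_1+f_2>0$ for $ye^{-\gamma\tau}\geq R_*$, which holds because $f_2$ dominates the negative part of $f_1$ once $R_*$ is large. The paper streamlines your case split slightly by using the global one-sided bound $g(y)\geq -C_g y^{-5}$ (rather than the two-sided bound $|g|\leq Cy^{-5}(1+y^{4k-2})$ from Lemma~\ref{prop-Nf-estimate}), which directly yields the clean condition $ye^{-\gamma\tau} > (C_g B_*K_1^2)^{1/(5-p)}$ without needing to separate small and large $y$.
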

\begin{proof}
We can write the barrier functions \(f_\delta^\pm\) as
\[
  f_\delta^\pm(y, \tau)
  = K_1 e^{3\gamma\tau}\varphi_k(y) \pm
  \left\{ 
  \delta e^{3\gamma\tau}\varphi_k(y) + B_*K_1^2 e^{6\gamma\tau}g(y) + e^{(p+1)\gamma \tau}y^{-p}
  \right\}. 
\]
Since \(\varphi_k(y) > 0\) for all \(y>0\),  it is immediately clear that 
\[
  f^-_\delta(y, \tau) < f^-_{\delta/2}(y, \tau) 
  \text{ and }
  f^+_{\delta/2}(y, \tau) < f^+_\delta(y, \tau)
\]
for all \(y, \tau\).

To prove the middle inequality we note that \(f^-_{\delta/2}(y, \tau) <
f^+_{\delta/2}(y, \tau)\) holds if and only if 
\[
  \frac\delta2 e^{3\gamma\tau}\varphi_k(y) 
  +  B_*K_1^2 \,  e^{6\gamma\tau}g(y) 
  +  e^{(p+1)  \gamma  \tau}  y^{-p}
  > 0, 
\]
which, in view of \(\varphi_k(y)>0\) will certainly hold if
\begin{equation}\label{eq-nesting1}
  B_*K_1^2 \,  e^{6\gamma\tau}g(y) 
  +  e^{(p+1)  \gamma  \tau}  y^{-p}
  > 0.
\end{equation}
Since \(g(y)>0\) for large \(y>0\), there is a constant \(C_g>0\) such that
\(g(y)\geq -C_g y^{-5}\) for all \(y>0\).  Hence \eqref{eq-nesting1} follows from
\[
  e^{(p+1)  \gamma  \tau} y^{-p} -  C_g B_*K_1^2  e^{6\gamma\tau}y^{-5} > 0,
  \, \,\text{ i.e. }
  y e^{-\gamma\tau} > \left( C_g B_*K_1^2 \right)^{1/(5-p)}.
\]
\end{proof}

\subsection{Barriers in the inner region}
\label{subsec-inner}
In this section we present a family of sub- and super-solutions to the
equation~\eqref{eq-w} for \(w(z,\tau)\) in the inner region \(0\leq z\leq Z\).

We recall our notation from section \ref{subsec-inner-formal} where \(W(z)\)
denotes the unique Alencar solution to~\eqref{eqn-Alencar}, normalized so that
\begin{equation}\label{eqn-Was}
  W(z) = z + \frac{1}{z^2} + \frac{\Gamma}{z^3} + \cO\bigl(z^{-5}\bigr) \qquad
  (z\to\infty)
\end{equation}
holds for certain constant \(\Gamma\in\R\).  

\begin{lemma}\label{lem-WK-starshaped}
For all \(z > 0\) one has \( W_K(z)>zW_K'(z)\).
\end{lemma}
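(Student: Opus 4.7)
The strategy exploits the scaling invariance of \eqref{eqn-Alencar}: for every $\lambda > 0$, $W_\lambda(z) := \lambda W(z/\lambda)$ is again a solution. A direct computation gives
\[
W_K(z) - z\,W_K'(z) \;=\; K\,\psi(z/K), \qquad \psi(\zeta) \;:=\; W(\zeta) - \zeta W'(\zeta),
\]
so the lemma is equivalent to showing that the scaling Jacobi field $\psi = \partial_\lambda W_\lambda|_{\lambda=1}$ is strictly positive on $(0,\infty)$. I would first verify the two boundary asymptotics of $\psi$: from the normalization~\eqref{eqn-Was}, $\psi(\zeta) = 3\zeta^{-2} + 4\Gamma\zeta^{-3} + O(\zeta^{-5}) > 0$ as $\zeta \to \infty$, while smoothness of the $O(4)\times O(4)$-symmetric hypersurface at the origin forces $W$ to extend to a smooth even function of $\zeta$ with $W(0) = W_0 > 0$ and $W'(0) = 0$, so $\psi(0^+) = W_0 > 0$.

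The heart of the argument is to show that the scaled family $\{W_\lambda\}_{\lambda > 0}$ is strictly ordered: $W_{\lambda_1}(z) < W_{\lambda_2}(z)$ for all $z > 0$ whenever $\lambda_1 < \lambda_2$. Writing Alencar's equation as $W'' = F(W,W',z)$ with $F = (1+W'^2)(3/W - 3W'/z)$ and applying the mean value theorem to $F(W_{\lambda_2}, W_{\lambda_2}', z) - F(W_{\lambda_1}, W_{\lambda_1}', z)$, the difference $D = W_{\lambda_2} - W_{\lambda_1}$ satisfies a linear second-order ODE $D'' + p(z)\,D' + q(z)\,D = 0$ in which $q = -\overline{F_W} = 3\,\overline{(1+W'^2)/W^2}$ is \emph{positive}. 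Although this sign of $q$ rules out a naive maximum principle, the following substitute still holds: at any interior point $z_0$ where a nonnegative $D$ vanishes, one has $D'(z_0) = 0$ (local minimum), the equation then forces $D''(z_0) = 0$, and Picard uniqueness gives $D \equiv 0$.

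From the boundary asymptotics $W_\lambda(z) \to \lambda W_0$ as $z \to 0$ and $W_\lambda(z) = z + \lambda^3/z^2 + O(z^{-3})$ at infinity, one checks that $W_\lambda > W_1$ uniformly on $(0,\infty)$ for $\lambda$ sufficiently large. The set $A = \{\lambda > 1 : W_\lambda > W_1 \text{ on } (0,\infty)\}$ is then nonempty; by uniform continuity of $\lambda \mapsto W_\lambda$ in sup norm on $(0,\infty)$ (via the boundary asymptotics) it is open, and by the substitute maximum principle applied to any limiting touching point it is closed in $(1,\infty)$. Connectedness forces $A = (1,\infty)$, so $W_\lambda > W_1$ for every $\lambda > 1$; the symmetric argument handles $\lambda < 1$.

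Differentiating the monotone inequality at $\lambda = 1$ yields $\psi \geq 0$, and one final application of the substitute maximum principle to the Jacobi field equation $L\psi = 0$ (an interior zero of the nonnegative $\psi$ would force $\psi \equiv 0$, contradicting the positive boundary values) upgrades this to $\psi > 0$. The principal obstacle is the positive sign of the zeroth-order coefficient $q$, which obstructs the classical maximum principle; the resolution, used both for $D$ in the family argument and for $\psi$ itself, is that at any interior zero of a nonnegative solution the value, the first derivative, and (through the equation) the second derivative all vanish simultaneously, reducing to trivial Cauchy data.
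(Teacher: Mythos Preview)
Your approach is valid in outline but is vastly more elaborate than what the paper does, and it contains a genuine gap.

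The paper's proof is three lines.  After reducing to \(K=1\) by scaling, it observes that \(\psi(z)=W(z)-zW'(z)\) satisfies \(\psi'(z)=-zW''(z)<0\) by the convexity of the Alencar profile (a fact established in the appendix, citing Vel\'azquez), and that \(\psi(z)\to 0\) as \(z\to\infty\) from the asymptotics~\eqref{eqn-Was}.  A strictly decreasing function with limit \(0\) at infinity is positive; done.

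Your argument never invokes convexity, and this is where it breaks.  The step ``one checks that \(W_\lambda>W_1\) uniformly on \((0,\infty)\) for \(\lambda\) sufficiently large'' is asserted from boundary asymptotics alone, but those asymptotics do not give it: both \(W_\lambda\) and \(W_1\) grow like \(z\), and on the intermediate range \(z\sim\lambda\) neither dominates the other by inspection.  Worse, the full ordering \(W_\lambda>W_1\) for all \(\lambda>1\) is \emph{equivalent} to \(\psi>0\) (since \(\partial_\lambda W_\lambda(z)=\psi(z/\lambda)\), and also since \(W_\lambda>W_1\) for all \(\lambda>1\) is the statement that \(W(z)/z\) is decreasing, whose derivative is \(-\psi/z^2\)).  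So establishing nonemptiness of your set \(A\) without circularity genuinely requires an additional input---and the natural one is exactly \(W''>0\), which you do not use.  Once you allow convexity, the paper's direct argument makes the entire family-ordering machinery, the ``substitute maximum principle,'' and the open--closed continuity argument superfluous.

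Your technical tools (the tangential-touching uniqueness argument for \(D\) and for \(\psi\)) are correct where they apply; the issue is purely that the connectedness argument has no established basepoint.
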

\begin{proof} The inequality is invariant under rescaling, so we may assume \(K=1\).
The asymptotics \eqref{eqn-Was} show that \(W(z)-zW_z(z) \to 0\) as \(z\to\infty\).  On
the other hand, convexity of \(W\) implies \((W-zW_z)_z = - z W_{zz} < 0\) for all
\(z > 0\).  Hence \( W(z)-zW_z(z) > \lim_{Z\to\infty} W(Z)-ZW_z(Z) = 0\) for all
\(z\geq 0\).
\end{proof}

\begin{lemma}\label{lemma-inner-barrier1} For any \(K >0\) function \(w^+(z,\tau)
= W_{K}(z)\) is a super-solution of equation \eqref{eq-w} on
\([0,\infty)\times\R\).
\end{lemma}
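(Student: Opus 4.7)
The plan is to substitute $w^+(z,\tau) = W_K(z)$ directly into equation \eqref{eq-w} and verify the appropriate super-solution inequality. Since \eqref{eq-w} is written with the time derivative on the right rather than the left, I would first fix the sign convention by tracing the super-solution property back through the changes of variable $u\mapsto v\mapsto w$ from \eqref{eq-par-scaling} and \eqref{eqn-v100}. A short computation identifies the correct super-solution inequality as
\[
  e^{2\gamma\tau}\bigl[w_\tau + \tfrac{k}{3}(w - zw_z)\bigr] \,\geq\, \frac{w_{zz}}{1+w_z^2} + \frac{3}{z}w_z - \frac{3}{w}.
\]

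For $w^+ = W_K(z)$ the verification is then immediate: since $W_K$ is independent of $\tau$, the term $w^+_\tau$ vanishes; since $W_K(z) = KW(z/K)$ satisfies Alencar's equation \eqref{eqn-Alencar} by the scaling invariance noted in \S\ref{subsec-inner-formal}, the entire right-hand side vanishes identically; and the remaining contribution to the left-hand side is $e^{2\gamma\tau}\tfrac{k}{3}(W_K - zW_K')$, which is strictly positive for $z>0$ by Lemma \ref{lem-WK-starshaped}. Hence the super-solution inequality holds strictly on $(0,\infty) \times \R$.

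The only point needing extra care is the endpoint $z = 0$, where the term $\tfrac{3}{z}w_z$ in \eqref{eq-w} is apparently singular. The smoothness of the $O(4)\times O(4)$-symmetric Alencar solution forces $W_K'(0) = 0$ and $W_K(0) > 0$, so $\tfrac{3}{z}W_K'(z)$ extends continuously to $3W_K''(0)$ at the origin and the super-solution inequality extends to $z=0$ by continuity.

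I do not expect any real obstacle here: the content of the lemma is just the combination of Alencar's equation, which kills the spatial operator, and the star-shapedness from Lemma \ref{lem-WK-starshaped}, which supplies the positivity of the remaining term $W_K - zW_K'$. The only thing requiring attention is the direction of the inequality in the definition of \emph{super-solution} for the inner equation \eqref{eq-w}, which is unusual because the parabolic equation has been rescaled to put the evolution term on the right.
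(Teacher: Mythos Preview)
Your proposal is correct and follows essentially the same approach as the paper: the paper's proof is exactly the three-line observation that $w^+_\tau=0$, that the spatial operator vanishes because $W_K$ solves Alencar's equation, and that the remaining term $\frac{k}{3}(W_K - zW_K')$ is positive by Lemma~\ref{lem-WK-starshaped}. Your extra remarks on the sign convention and the $z=0$ endpoint are harmless additions that the paper simply omits.
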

\begin{proof}
The function  \(w^+\) satisfies \(w^+_\tau=0\) and 
\[
  \frac{w^+_{zz}}{1+(w^+_z)^2} + \frac{3}{z} w^+_z - \frac{3}{w^+}=0. 
\]
From Lemma~\ref{lem-WK-starshaped} we have \(w^+-zw^+_{z}>0\), and thus
\[
  e^{2\gamma\tau}\left(w^+_\tau + \frac k3(w^+-zw^+_z)\right) >  
  \frac{w^+_{zz}}{1+(w^+_z)^2} + \frac{3}{z}w^+_z - \frac{3}{w^+}
\]
as claimed.
\end{proof}

\begin{lemma}\label{lemma-inner-barrier2}
There exist \(D_*>0\), \(\zeta>0\) such that for all
\(K\in (\frac12 K_2, 2K_2)\), and \(D\geq D_*\) there is a \(\tau_*(D)\) such that
\[
  w^-(z, \tau) := W_{K}(z) +  D\, e^{2\gamma\tau}
\]
is a sub-solution of \eqref{eq-w} for \(0\leq z\leq \zeta e^{-\gamma\tau}\),
\(\tau \leq \tau_*(D)\).
\end{lemma}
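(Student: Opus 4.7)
The plan is to verify the sub-solution inequality by direct substitution, exploiting that $W_K$ exactly solves Alencar's equation~\eqref{eqn-Alencar}. Since $w^-_\tau = 2\gamma D e^{2\gamma\tau}$, $w^-_z = W_K'$, and $w^-_{zz}=W_K''$, and since $\frac{W_K''}{1+(W_K')^2} + \frac 3z W_K' = \frac{3}{W_K}$, the condition that $w^-$ be a sub-solution of~\eqref{eq-w} reduces, after dividing by $e^{2\gamma\tau}$, to the single scalar inequality
\[
  \frac k3 \bigl(W_K - zW_K'\bigr) + \bigl(2\gamma + \tfrac k3\bigr) D e^{2\gamma\tau}
  \;\le\; \frac{3D}{W_K(W_K + De^{2\gamma\tau})}.
\]

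The leading behaviour as $\tau\to-\infty$ is $\frac k3(W_K - zW_K') \le 3D/W_K^2$, equivalently $D \ge \tfrac k9 \Lambda_K(z)$, where $\Lambda_K(z) := (W_K - zW_K')\,W_K^2$. I would first show that $\Lambda_K$ is uniformly bounded on $[0,\infty)$ for $K \in [K_2/2, 2K_2]$. Smoothness of the Alencar solution gives $W_K'(0) = 0$, so $\Lambda_K(0) = W_K(0)^3 = K^3 W(0)^3$, while the expansion~\eqref{eqn-Was} together with $W_K(z) = KW(z/K)$ gives $W_K(z) - zW_K'(z) = 3K^3/z^2 + O(z^{-3})$ and $W_K(z)^2 = z^2 + O(z^{-1})$ as $z\to\infty$, so $\Lambda_K(z) \to 3K^3$. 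Continuity on $[0,\infty)$ together with finite endpoint limits yields $C_* := \sup\{\Lambda_K(z) : z \ge 0,\ K \in [K_2/2, 2K_2]\} < \infty$, and I set $D_* := \tfrac{4kC_*}{9}$; then for every $D \ge D_*$ one has $\tfrac k3 (W_K - zW_K') \le \tfrac{3D}{4 W_K^2}$.

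The two perturbation terms are then handled independently. First, since $W_K(z) \ge W_K(0) \ge \tfrac 12 K_2 W(0) > 0$, imposing $De^{2\gamma\tau} \le W_K(0)$ forces $W_K + De^{2\gamma\tau} \le 2W_K$, hence $\tfrac{3D}{W_K(W_K + De^{2\gamma\tau})} \ge \tfrac{3D}{2W_K^2}$; this defines one part of $\tau_*(D)$. Second, in the inner region $z \le \zeta e^{-\gamma\tau}$, monotonicity of $W_K$ and the bound $W_K(y) \le y + C$ for large $y$ (a uniform-in-$K$ consequence of~\eqref{eqn-Was}) give $W_K(z) \le 2\zeta e^{-\gamma\tau}$, provided $\tau_*(D)$ is further decreased so that $\zeta e^{-\gamma\tau} \ge C$; consequently $W_K^2 e^{2\gamma\tau} \le 4\zeta^2$, so fixing $\zeta$ small enough that $4\zeta^2 (2\gamma + \tfrac k3) \le \tfrac 34$ yields $(2\gamma + \tfrac k3) D e^{2\gamma\tau} \le \tfrac{3D}{4 W_K^2}$. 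Summing the two left-hand estimates gives $\tfrac{3D}{2W_K^2}$, which by the first perturbation estimate is $\le \tfrac{3D}{W_K(W_K + De^{2\gamma\tau})}$, establishing the reduced inequality.

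The main technical point is the global boundedness of $\Lambda_K$; this is not deep, but it captures the whole quantitative content of the lemma, because the limiting constant $3K^3$ appearing in the large-$z$ behaviour of $\Lambda_K$ is precisely the geometric correction in~\eqref{eqn-Was} that dictates the size of $D_*$. The remaining ingredients—uniform positivity of $W_K(0)$ and the near-linear growth of $W_K$ at infinity, each uniform in $K \in [K_2/2, 2K_2]$—follow from~\eqref{eqn-Was} and continuity of the Alencar family in $K$, and present no further difficulty.
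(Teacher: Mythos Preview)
Your proof is correct and follows essentially the same route as the paper's. Both arguments substitute $w^-$ into \eqref{eq-w}, use that $W_K$ solves Alencar's equation to reduce to the scalar inequality $\frac{3D}{W_K\,w^-} \ge (k-1)De^{2\gamma\tau} + \tfrac{k}{3}(W_K - zW_K')$, impose $De^{2\gamma\tau}\le W_K(0)$ to get $w^-\le 2W_K$, bound $W_K - zW_K'$ via the expansion~\eqref{eqn-Was} (the paper writes $W_K - zW_K' \le C(1+z)^{-2}$, you package the same fact as boundedness of $\Lambda_K = (W_K - zW_K')W_K^2$), and then pick $D$ large and $\zeta$ small. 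The only difference is cosmetic: you compare everything to $W_K^{-2}$ while the paper uses the equivalent scale $(1+z)^{-2}$.
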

\begin{proof}
Choose
\[
  \tau_*(D) \leq \frac1{2\gamma}\log \frac{W_K(0)}{D}.
\]
Then \(\tau\leq \tau_*(D)\) and \(z\geq 0\) implies
\[
  De^{2\gamma\tau} \leq W_K(0)\leq W_K(z)
\]
so that
\[
  W_K(z) \leq w^-(z, \tau) \leq 2W_K(z).
\]

If we substitute \(w=w^-\) in \eqref{eq-w} and use \(2\gamma+\frac 13k =
k-1\), then on one hand
\[
  e^{2\gamma\tau}\Big (w_\tau^- + \frac k3(w^- -zw_z^-)\Big) =
  e^{2\gamma\tau}\Big(
  (k-1) D e^{2\gamma\tau} + \frac k3 \bigl(W_{K}-z\, W_{K}' \bigr)
  \Big),
\]
and on the other hand, 
\[
  \frac{w_{zz}^-}{1+(w_z^-)2} + \frac{3}{z}w_z^- - \frac{3}{w^-} =\frac{W_{K}''}{1+(W_{K}')^2} +
  \frac{3}{z}W_{K}' - \frac{3}{w^-} =\frac{3}{W_{K}}-\frac{3}{w^-}
  =\frac{3De^{2\gamma\tau}}{W_{K} \,  w^-}.
\]
Hence \(w^-\) is a sub-solution if
\begin{equation} \label{eq-inner-sub-proof}
\frac{3D}{W_{K}(z) w^-(z,\tau)} > (k-1) De^{2\gamma\tau} + \frac k3
\bigl(W_{K}(z)-zW_{K}'(z)\bigr). 
\end{equation}
Since \(W_K\leq w^-\leq 2W_K\leq C(1+z)\) there is a constant \(C_1\) such that the
terms on the left are bounded from below by
\[
  \frac{3D}{W_{K}(z) w^-(z,\tau)}\geq \frac{C_1D}{(1+z)^2}.
\]
The terms on the right in \eqref{eq-inner-sub-proof} satisfy
\[
  (k-1)e^{2\gamma\tau}\leq C_2 \frac{\zeta^2}{(1+z)^2} 
\]
in the region \(1+z\leq \zeta e^{-\gamma\tau}\), and, due to the asymptotic expansion
of \(W_{K}(z)\) as \(z\to\infty\) (which follows from \eqref{eqn-Was}), they also
satisfy
\[
  W_{K}(z)-zW_{K}'(z) \leq \frac{C_3}{(1+z)^2} \quad \text{ for all } \,\, z\geq 0.
\]
Hence
\[
  (k-1) De^{2\gamma\tau} + \frac k3 \bigl(W_{K}(z)-zW_{K}'(z)\bigr)
  \leq \frac{C_2\zeta^2D +C_3}{(1+z)^2}.
\]  
Choose  \(\zeta < \sqrt{C_1/2C_2}\), and choose \(D\) so large that
\(C_3 < \frac12 C_1D \).  Then we have
\[
  (k-1) De^{2\gamma\tau} + \frac k3 \bigl(W_{K}(z)-zW_{K}'(z)\bigr)
  < \frac{C_1D}{(1+z)^2} \leq \frac{3D}{W_K(z)w(z,\tau)},
\]
which implies \eqref{eq-inner-sub-proof}, and thus that \(w^-\) is a lower barrier in
the region \(1+z\leq \zeta e^{-\gamma\tau}\).  Choose \(\tau_*\) so that
\(\zeta e^{-\gamma\tau_*}\geq 2\).  Then \(1+z\leq \zeta e^{-\gamma\tau}\) holds for
all \(z\leq 1\) and \(\tau\leq \tau_*\), while for \(z\geq 1\) it follows from
\(2z\leq \zeta e^{-\gamma\tau}\) that  \(1+z\leq \zeta e^{-\gamma\tau}\).

Thus \(w^-\) is a lower barrier in the region \(z\leq \frac12\zeta e^{-\gamma\tau}\),
\(\tau\leq \tau_*\).
\end{proof}

\subsection{Matching outer and intermediate barriers}
\label{subsec-outer-inter}
We show that upper and lower barriers constructed in the inner, the intermediate, and
the outer regions match in the overlapping region.  We begin here with the overlap of
the outer and intermediate regions.

We start with an \(M>0\) sufficiently large so that the functions
\( u^\pm(x,t) = u_0(x) \pm Mt\min\{1, x^{2k-4}\}\) are sub- and super-solutions  of
\eqref{eq-u-pde} in the outer region \(\cO_M\) (see Lemma \ref{lemma-outer}).  In
order to match the outer barriers with the barriers in the intermediate region, we
express the outer barriers \(u=u^\pm(x, t)\) in the intermediate variables
\((v,y,\tau)\):
\[
  v_{\rm out}^\pm(y, \tau) \stackrel{\rm def}= e^{-\tau/2} u^\pm(e^{\tau/2}y, e^\tau).
\]
In \eqref{eq-outer-upper-barrier} we defined \(u^\pm(x, t) = u_0(x) \pm
Mt\, x^{2k-4} \) for \(0 < x\leq 1\).   If we write the assumption \eqref{eq-init-data1} on
the initial data in the form
\begin{equation}\label{eqn-uK0}
  u_0(x) = x + \bigl(K_0+\epsilon_0(x)\bigr) x^{2k-2},
\end{equation}
where \(\epsilon_0:(0,\infty)\to\R\) satisfies \(\lim_{x\to 0}\epsilon_0(x)=0\),
then we get the following expression for the outer barriers in the intermediate
variables:
\begin{equation}
  \label{eq-v-pm-out}
  v_{\rm out}^\pm(y,\tau) 
  = y + \left(K_0 + \epsilon_0(ye^{\tau/2})\right) e^{3\gamma\tau}y^{2k-2}
  \pm Me^{3\gamma\tau}y^{2k-4}.
\end{equation}
The outer barriers only contain the parameter \(M\) and thus do not depend on other
parameters such as \(\delta, B\) that appeared in the barriers we constructed for the
intermediate and inner regions.

We now consider the intermediate barriers, continuing to use the conventions from
Section~\ref{sec-fix-parameters} which relate the constants \(K_0, K_1\), etc.

In Proposition~\ref{lem-intermed-barriers} we found \(B_*\), \(R_*\), and \(\tau_*\),
such that for any \(\delta\in(0,\frac{1}{2}K_1)\) and \(p \in (2,3)\) the functions
\[
  v^{\pm}_\delta(y,\tau) = y+ (K_1\pm \delta) e^{3\gamma\tau} \varphi_k(y) 
  \pm \left\{e^{(p+1)\gamma\tau} y^{-p} + B_*K_1^2 e^{6\gamma\tau} g(y)\right\},
\]
are upper and lower barriers in the intermediate region
\(\cM_{R_*,\tau_*} = \{R_* e^{\gamma\tau} \leq y \leq e^{-\tau/2}, \tau \leq
\tau_*\} \).

To compare \(v_{\rm out}^\pm\) and \(v_\delta^\pm\) we rewrite them as
\begin{align*}
  e^{-3\gamma\tau}\left(v_{\rm out}^\pm(y, \tau) - y\right) 
  &= \left(K_0+\epsilon_0(ye^{\tau/2})\right) y^{2k-2} 
  \pm M y^{2k-4}\\
  e^{-3\gamma\tau}\left(v_{\delta}^\pm(y, \tau) - y\right) 
  &= (K_1\pm\delta)\varphi_k(y)  \pm e^{(p-2)\gamma \tau}y^{-p} \pm B_*K_1^2 e^{3\gamma\tau}g(y).
\end{align*}
We now let \(\tau\to-\infty\) and conclude that 
\begin{equation}
  \label{eq-vpm-limits}\left\{\;
  \begin{aligned}
    e^{-3\gamma\tau}\left(v_{\rm out}^\pm(y, \tau) - y\right) 
    &\to K_0 \, y^{2k-2} \pm M y^{2k-4}\\
    e^{-3\gamma\tau}\left(v_{\delta}^\pm(y, \tau) - y\right) 
    &\to \left(K_1\pm \delta\right) \varphi_k(y)
  \end{aligned}
  \right.
\end{equation}
uniformly for bounded \(y\).

The explicit expression \eqref{eq-phik} for \(\varphi_k\) implies
\[
  \varphi_k (y) = \frac{y^{2k-2}}{(2k+1)!!} + c(y)y^{2k-4}
\]
where 
\[
  c(y) = c_0 + \frac{c_1}{y^2}+\cdots+ \frac{c_{k-1}}{y^{2k-2}},
  \qquad
  c_j = \frac{\binom{k}{j+1}}{(2(k-j)-1)!!}.
\]
Substitute this expression for \(\varphi_k\) in \eqref{eq-vpm-limits} and keep in mind that \(K_1 = (2k+1)!! K_0\).  Then
\[
  e^{-3\gamma\tau}\left(v_{\rm out}^\pm(y, \tau) - v_\delta^\pm(y, \tau)\right)
  \to
  \pm y^{2k-4} \left\{
    -\frac{\delta y^2}{(2k+1)!!} + M -  c(y)
    \right\}.
\]
The function \(c(y)\) is clearly bounded for \(y\geq 1\) so if \(M \) is sufficiently
large, one can neglect \(c(y)\) and conclude that
\(v^\pm_{\rm out}(y, \tau)-v_\delta^\pm(y, \tau)\) changes sign when
\[
  \frac{\delta y^2}{(2k+1)!!}=M-c(y)\approx M.
\]
To make this more precise we introduce \(Y_{\delta} := 2\,\sqrt{(2k+1)!!M/\delta}\) and
compare the barriers \(v_{\rm out}^\pm(y, \tau) \) and \(v_\delta^\pm(y, \tau)\) at the
endpoints \(y_{\delta}(\tau) \in (\frac 14 Y_{\delta},Y_{\delta})\).

\begin{lemma} \label{lemma-match-outer-inter}
For any \(\delta > 0\) there is a \(\tau_\delta\in\R\) such that for all \(\tau
\leq \tau_\delta\) one has
\[
  v_{\rm out}^+(Y_{\delta}/4, \tau) > v_\delta^+(Y_{\delta}/4, \tau)\text{ and }
  v_{\rm out}^-(Y_{\delta}/4, \tau) < v_\delta^-(Y_{\delta}/4, \tau).
\]
Moreover,  we also have
\[
  v_{\rm out}^+(Y_\delta, \tau) < v_\delta^+(Y_\delta, \tau)\text{ and }
  v_{\rm out}^-(Y_\delta, \tau) > v_\delta^-(Y_\delta, \tau)
\]
for all \(\tau\leq \tau_\delta\).
\end{lemma}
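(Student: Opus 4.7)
The plan is to substitute $y = Y_\delta$ and $y = Y_\delta/4$ into the limiting identity
\[
  e^{-3\gamma\tau}\bigl(v_{\rm out}^\pm(y,\tau) - v_\delta^\pm(y,\tau)\bigr)
  \longrightarrow \pm\, y^{2k-4}\left\{-\frac{\delta y^2}{(2k+1)!!} + M - c(y)\right\}
\]
established in the paragraphs just before the lemma, observe that the expression in braces has opposite signs at those two points, and then transfer the sign from the limit to all sufficiently negative $\tau$.

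Concretely, the definition $Y_\delta = 2\sqrt{(2k+1)!!\,M/\delta}$ gives $\delta Y_\delta^2/(2k+1)!! = 4M$, so the bracket equals $-3M - c(Y_\delta)$ at $y = Y_\delta$, while $\delta (Y_\delta/4)^2/(2k+1)!! = M/4$ produces $\tfrac{3M}{4} - c(Y_\delta/4)$ at $y = Y_\delta/4$. The explicit form $c(y) = c_0 + c_1/y^2 + \cdots + c_{k-1}/y^{2k-2}$ shows that $c$ is bounded on $[1,\infty)$ by a constant $\bar{c}$ depending only on $k$, and $Y_\delta/4 \to \infty$ as $\delta \to 0$, so for every sufficiently small $\delta$ one has $|c(Y_\delta)|, |c(Y_\delta/4)| \leq \bar{c}$. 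Enlarging $M$ if necessary (which is permitted because Lemma~\ref{lemma-outer} only requires $M$ to be sufficiently large) so that $M > 4\bar{c}$, the bracket is $\leq -M < 0$ at $y = Y_\delta$ and $\geq M/2 > 0$ at $y = Y_\delta/4$.

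With these signs fixed, the conclusion follows quickly. For each fixed $\delta > 0$ the numbers $Y_\delta$ and $Y_\delta/4$ are just two specific points, so \eqref{eq-vpm-limits} provides a threshold $\tau_\delta \in \R$ such that for all $\tau \leq \tau_\delta$ the quantity $e^{-3\gamma\tau}\bigl(v_{\rm out}^\pm - v_\delta^\pm\bigr)$ evaluated at each of these points has the same sign as its limit. Combining with the $\pm y^{2k-4}$ prefactor, this yields $v_{\rm out}^+ > v_\delta^+$ and $v_{\rm out}^- < v_\delta^-$ at $y = Y_\delta/4$, and the reversed inequalities $v_{\rm out}^+ < v_\delta^+$, $v_{\rm out}^- > v_\delta^-$ at $y = Y_\delta$, which are exactly the four claims of the lemma. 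There is no real obstacle; the lemma is essentially a bookkeeping step unpacking the matching computation carried out just above. The only point that warrants attention is verifying that the correction $c(Y_\delta)$ cannot swamp the leading $M$-terms even though $Y_\delta \to \infty$ as $\delta \to 0$, and this is precisely why the polynomial-in-$1/y^2$ form of $c(y)$ matters and why $M$ can be chosen large independently of $\delta$.
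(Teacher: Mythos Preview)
Your proof is correct and follows essentially the same approach as the paper: substitute $y=Y_\delta$ and $y=Y_\delta/4$ into the limiting identity for $e^{-3\gamma\tau}\bigl(v_{\rm out}^\pm - v_\delta^\pm\bigr)$, use the boundedness of $c(y)$ on $[1,\infty)$ to fix the sign of the bracket by choosing $M$ large, and pass from the sign of the limit to the sign for all sufficiently negative~$\tau$. Your write-up is in fact a bit more careful with the constants than the paper's own proof.
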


\begin{proof}
We only consider the upper barriers, the other case being nearly identical.

We have found that as \(\tau\to-\infty\)
\[
  e^{-3\gamma\tau}\left(v_{\rm out}^+ (Y_{\delta}/4, \tau) - v_\delta^+ (Y_{\delta}/4, \tau)\right)
  \to
  (Y_{\delta}/4)^{2k-4} \Big\{
    -\frac M4 + M -  c(M) 
    \Big\}. 
\]
Since \(c(y)\) is bounded for \(y\geq 1\), given any large \(M\) we will still have
\[
  \frac{3M}{4} - c(M) > 0.
\]
Hence 
\[
  \lim_{\tau\to-\infty}
  e^{-3\gamma\tau}\left(v_{\rm out}^+ (Y_{\delta}/4, \tau) - v_\delta^+ (Y_{\delta}/4, \tau)\right) >
  0,
\]
which implies that for \(-\tau\) sufficiently large one has \(v_{\rm
out}^+(Y_{\delta}/4, \tau) > v_\delta^+(Y_{\delta}/4, \tau)\), as claimed.

If on the other hand we compare \(v_{\rm out}^+\) and \(v_\delta^+\) at
\(y=Y_\delta\), then we find that for \(\tau\to-\infty\)
\begin{align*}
  e^{-3\gamma\tau}\left(v_{\rm out}^+ (Y_\delta, \tau) - v_\delta^+ (Y_\delta, \tau)\right)
  &\to Y_\delta^{2k-4}\left\{ -4M + M  -   c(Y_\delta) \right\}\\ 
  &=-Y_\delta^{2k-4}\left\{ 3M  +  c(Y_\delta) \right\}.
\end{align*}
Since \(c(y)\) is bounded for \(y\geq 1\),  it follows  that for \(M\) large enough
we indeed have \(v_{\rm out}^+(Y_\delta, \tau) < v_\delta^+(Y_\delta,\tau)\), as
\(\tau\to-\infty\).
\end{proof}

\subsection{Matching intermediate and inner barriers}
\label{subsec-inner-inter}

For any \(\delta \in (0,\frac 12 K_1)\), \(p \in (2,3)\) and \( B = B_*\) the barriers
\(v_\delta^\pm(y, \tau) = y+f_\delta^\pm(y, \tau)\) constructed above are defined in
the intermediate region
\(\cM_{R_*, \tau_*}=\{R_*e^{2\gamma\tau}\leq y\leq e^{-\tau/2}, \tau\leq \tau_*\}\).
If we assume that \(Z > 2R_*\), then it follows \( v_\delta^\pm(y, \tau)\) are defined in
parts of the inner region
\( \cI_{Z,\tau_*} = \left\{(z, \tau) \mid 0\leq z\leq Z, \tau\leq \tau_*\right\}\).
Define
\[
  w^\pm_{\rm md}(z, \tau) := e^{-\gamma\tau} v_\delta^\pm\left(e^{\gamma\tau}z,
  \tau\right).
\]
Then
\begin{align*}
  w^\pm_{\rm md}(z,\tau)
  & = z + \frac{K_1\pm\delta}{z^2} \bigl(1+\epsilon_1(z,\tau)\bigr)
  \pm \frac{1}{z^p} \pm \frac{B_*K_1^2}{z^5} \bigl(1+\epsilon_2(z,\tau)\bigr)
\end{align*}
where \(\epsilon_i(z,\tau)\) are generic functions for which
\(\epsilon_i(z,\tau) \to 0\) as \(\tau \to -\infty\), uniformly for \(0\leq z\leq Z\).  In
particular, for all \(z\in[0,Z]\) we have
\begin{equation}\label{eqn-wa14}
  \lim_{\tau\to-\infty} w_{\rm md}^\pm(z,\tau)
  = z + \frac{K_1}{z^2} \pm \left\{\frac{\delta}{z^2}
  + \frac{1}{z^p} + \frac{B_*K_1^2}{z^5} \right\}.
\end{equation}

We will now use Lemmas~\ref{lemma-inner-barrier1}
and~\ref{lemma-inner-barrier2} to match \(w_{\rm md}^\pm(z,\tau)\) with
appropriately chosen barriers \(w^\pm_\delta(z,\tau)\) in the inner region \(0
\leq z \leq Z\).  For suitable $\delta$-dependent  constants \(K_2^\pm\in(\frac12 K_2,
2K_2)\), with \((K_2)^3=K_1\), we consider
\[
  w_{\delta}^+(z,\tau)  \stackrel{\rm def}= W_{K_2^+}(z),\qquad
  w_{\delta}^-(z,\tau)  \stackrel{\rm def}= W_{K_2^-}(z) + D\, e^{2\gamma\tau}
\]
where \(D\) depends on \(K_2^-\) and \(Z\) as described in Lemma~\ref{lemma-inner-barrier2}. 

It follows from Lemmas~\ref{lemma-inner-barrier1},~\ref{lemma-inner-barrier2},
that for each \(K_2^+ >0\) and \(K_2^->0\), \(w_\delta^+\) and \(w_\delta^-\) are upper
barrier and lower barriers for \eqref{eq-w} in the inner region.  Furthermore
the asymptotics at infinity of the Alencar solution in \eqref{eqn-Was} imply
that 
\[
  \lim_{\tau \to -\infty} w_{\delta}^\pm(z,\tau)
  = z + \frac{(K_2^\pm)^3}{z^2} + \frac{\Gamma (K_2^\pm)^4}{z^3} + \cO(z^{-5})
  \quad (z \gg 1).
\]
Comparing the asymptotic  expansions of \(  w^\pm_{\rm md}\) and \(w_{\delta}^\pm\)
we see that they match when \((K_2^\pm)^3=K_1\pm \delta\).  However with this
choice the barriers \( w^\pm_{\rm md}\) and \(w_{\delta}^\pm\) may not intersect.
For this reason we choose the constants \(K_2^\pm\) such that 
\[
  (K_2^\pm)^3=K_1\pm 2\delta.
\]
With this choice we then have
\begin{equation}\label{eqn-wa15}
  \lim_{\tau \to -\infty} w^\pm_\delta(z,\tau)
  = z + \frac{K_1\pm 2\delta}{z^2} + \frac{\Gamma (K_1\pm 2\delta)^{4/3}}{z^3} + \cO(z^{-5})
  \quad (z \gg 1).
\end{equation}

\begin{lemma} \label{lemma-match-middle-inner} Let \(p \in (2,3)\) be given, and let
\(B = B_k \) as in Proposition~\ref{lem-intermed-barriers}.  Then there exist
\(\bar\delta>0\) and \(R=R(B)\) so that for any \(\delta \in(0,\bar\delta)\) and
\(\tau\leq \tau_\delta\) the barriers \(w^\pm_\delta\) and \(w^\pm_{\rm md}\) cross in
the interval \(\big(\frac{1}{2}Z_{\delta}, Z_{\delta}\big)\), where
\(Z_\delta:= \frac 43 \, \delta^{\frac{-1}{p-2}}\), in the sense that
\[
  w_{\rm md}^+(Z_\delta/2, \tau) > w_\delta^+(Z_\delta/2, \tau)\quad \text{ and } \quad 
  w_{\rm md}^-(Z_\delta/2, \tau) < w_\delta^-(Z_\delta/2, \tau).
\]
and 
\[
  w_{\rm md}^+(Z_\delta, \tau) < w_\delta^+(Z_\delta, \tau) \quad \text{ and } \quad 
  w_{\rm md}^-(Z_\delta, \tau) > w_\delta^-(Z_\delta, \tau). 
\]  
\end{lemma}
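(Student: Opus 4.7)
The plan is to exploit the two $\tau\to-\infty$ asymptotics \eqref{eqn-wa14} for $w_{\rm md}^\pm$ and \eqref{eqn-wa15} for $w_\delta^\pm$, evaluate the difference $w_{\rm md}^\pm-w_\delta^\pm$ at the two endpoints $z=Z_\delta/2$ and $z=Z_\delta$, and check that it has opposite signs at these endpoints. The crossing in the open interval $(Z_\delta/2,Z_\delta)$ then follows by continuity. Because everything in the two expansions is explicit except for $\tau$-dependent error factors $\epsilon_i(z,\tau)$ (which vanish as $\tau\to-\infty$ uniformly on any fixed compact $z$-range) and the additive drift $De^{2\gamma\tau}$ in $w_\delta^-$, the proof reduces to a careful comparison of explicit power laws in $\delta$.

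First I would subtract the expansions in the upper case. Since $w_{\rm md}^+$ contributes $(K_1+\delta)/z^2$ while $w_\delta^+$ contributes $(K_1+2\delta)/z^2$, the $z^{-2}$ coefficients partially cancel and leave a residual $-\delta/z^2$. Combined with the $+1/z^p$ term from $w_{\rm md}^+$, the principal part of $\lim_{\tau\to-\infty}(w_{\rm md}^+-w_\delta^+)(z,\tau)$ is
\[
L(z) \;=\; -\frac{\delta}{z^2} + \frac{1}{z^p},
\]
accompanied by a $z^{-3}$ term coming from the next order of Alencar's expansion, together with an $O(z^{-5})$ tail and the $+B_*K_1^2/z^5$ contribution inherited from $f_1$.

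Second, the sign change of $L$ happens exactly at $z=\delta^{-1/(p-2)}$, which is precisely why $Z_\delta=\tfrac{4}{3}\delta^{-1/(p-2)}$ is the right scale. Direct substitution gives
\[
L(Z_\delta/2) = \bigl((3/2)^p - 9/4\bigr)\,\delta^{p/(p-2)}, \qquad
L(Z_\delta) = \bigl((3/4)^p - 9/16\bigr)\,\delta^{p/(p-2)}.
\]
For $p\in(2,3)$ the first bracket is strictly positive and the second strictly negative, and both numerical factors depend only on $p$. The remaining correction terms at $z\asymp\delta^{-1/(p-2)}$ are of size $\delta^{3/(p-2)}$ and $\delta^{5/(p-2)}$, hence strictly lower order than the leading $\delta^{p/(p-2)}$, because $p<3$.

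Finally I would choose $\bar\delta>0$ small enough so that these algebraic corrections are dominated by the leading term for every $\delta\in(0,\bar\delta)$, and then, for each such $\delta$, pick $\tau_\delta$ sufficiently negative to absorb the vanishing factors $\epsilon_i(z,\tau)$ in $w_{\rm md}^\pm$ and the additive drift $De^{2\gamma\tau}$ in $w_\delta^-$, uniformly for $z\in[Z_\delta/2,Z_\delta]$. The lower-barrier inequalities are obtained by reversing all signs. The main technical point to watch is the bookkeeping: once $p\in(2,3)$ is fixed, the numerical gaps $(3/2)^p-9/4$ and $9/16-(3/4)^p$ are absolute positive constants, so the competition between the leading $\delta^{p/(p-2)}$ and the higher-order $\delta^{3/(p-2)}$, $\delta^{5/(p-2)}$ terms can be controlled with $\bar\delta$ depending only on $p,k,K_0,B_*,\Gamma$, and then $\tau_\delta$ depending additionally on $\delta$, which is exactly what the statement requires.
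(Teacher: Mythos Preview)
Your proposal is correct and follows essentially the same route as the paper: both subtract the asymptotic expansions \eqref{eqn-wa14} and \eqref{eqn-wa15}, isolate the competing terms \(-\delta z^{-2}+z^{-p}\), evaluate at \(z=Z_\delta\) and \(z=Z_\delta/2\), and observe that the \(\cO(z^{-3})\) and \(\cO(z^{-5})\) remainders are of strictly higher order in \(\delta\) since \(p<3\). Your explicit constants \((3/2)^p-9/4\) and \((3/4)^p-9/16\) are algebraically equivalent to the paper's \(\bigl((4/3)^{p-2}-1\bigr)Z_\delta^{-p}\) and \(\bigl((2/3)^{p-2}-1\bigr)2^pZ_\delta^{-p}\), so the two writeups differ only cosmetically.
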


\begin{proof}
We only consider the upper barriers, the other case being nearly
identical.  Proposition~\ref{lem-intermed-barriers} asserts that for
\(\delta < \frac 12 K_1\), the function \(w^+_{\rm md}(z,\tau)\) is an upper barrier in
the intermediate region \(R_* \leq z \leq e^{-(k/3 )\tau} \) and it
satisfies \eqref{eqn-wa14} with this choice of constants, that is
\[
  \lim_{\tau\to-\infty} w^+_{\rm md}(z,\tau)
  = z + \frac{K_1+\delta}{z^2} + \frac{1}{z^p} +\cO(z^{-5})
  \quad (z\to\infty)
\]
where the \(\cO(z^{-5})\) term is uniform in \(\delta\in(0, \frac12 K_1)\).
We have also seen that
\[
  \lim_{\tau\to-\infty} w^+_\delta(z,\tau)
  = z + \frac{K_1+2\delta}{z^2} +  \cO(z^{-3}) \quad(z\to\infty)
\]
where \(\cO(z^{-3})\) is again uniform in \(\delta\).
Therefore
\[
  \lim_{\tau\to-\infty} w_{\delta}^+(z,\tau) - w^+_{\rm md}(z,\tau)
  = \frac{\delta}{z^2} - \frac{1}{z^p} + O(z^{-3}) \quad(z\to\infty).
\]
Consider \(Z_\delta:= \frac 43 \delta^{-\frac{1}{p-2}}\).  For small enough
\(\delta>0\) one has \(Z_\delta \geq 2R_*\), so that \(w^\pm_\delta(z,\tau)\) and
\(w_{\rm md}^\pm (z, \tau)\) are defined for all \(z\geq \frac 12 Z_\delta\) and all
\(\tau\leq \tau_*\).  We evaluate these differences at \(z=Z_\delta\) and
\(z=\frac 12 Z_\delta\).  Eliminating \(\delta\) by using
\(\delta=(\frac 34 Z_\delta)^{-(p-2)}\) we find
\begin{equation*}
  \lim_{\tau\to-\infty} w_\delta^+(Z_\delta,\tau) - w^+_{\rm md}(Z_\delta,\tau)
  = \left(\bigl(\tfrac 43\bigr)^{p-2}-1\right) Z_\delta^{-p} +  \cO(Z_\delta^{-3}).
\end{equation*}
For small enough \(\delta>0\), \(Z_\delta\) is large, and thus the first term dominates
the second.  This implies that for small \(\delta>0\) there is a \(\tau_\delta<0\) such
that
\[
  w_\delta^+(Z_\delta,\tau) - w^+_{\rm md}(Z_\delta,\tau) > 0
\]
for all \(\tau\leq \tau_\delta\).  Similarly, we have
\[
  \lim_{\tau\to-\infty} w_\delta^+(Z_\delta/2,\tau) - w^+_{\rm md}(Z_\delta/2,\tau) = 
  \left(\left(\tfrac 23\right)^{p-2}-1\right)2^pZ_\delta^{-p}  +  \cO(Z_\delta^{-3}).
\] 
This implies that if \(\delta>0\) is small then there is a \(\tau_\delta<0\) such that
\[
  w_\delta^+(\tfrac 12 Z_\delta,\tau) - w^+_{\rm md}(\tfrac 12 Z_\delta,\tau) < 0
\]
for all \(\tau\leq \tau_\delta\).  
\end{proof}

\subsection{A summary of our construction so far}
\label{sec-choice-constants}

The initial data \(u_0\) determines two constants \(k\geq 4\) and \(K_0\).  Throughout the
paper we let \(K_1 = (2k+1)!!K_0\) and \(K_2=K_1^{1/3}\).

In section \S\ref{subsec-barriers-outer} we chose a constant \(M>0\) so that
Lemma~\ref{lemma-outer} holds and constructed upper and lower barriers \(u^\pm(x, t)\)
in the outer region \(\cO_M\).

For any small enough \(\delta>0\) we then constructed a family of barriers
\(v_\delta^\pm\) in the intermediate region defined by
\(R_* e^{\gamma\tau}\leq y \leq e^{-\tau/2}\), \(\tau\leq \tau_\delta\).  Here
Propositions~\ref{lem-intermed-barriers} and \ref{lem-intermed-barrier-ordering}
specify \(R_*\), while \(\tau_\delta\) is determined when we match the intermediate
and inner barriers in Lemma~\ref{lemma-match-outer-inter}.

For small \(\delta>0\) we then considered the inner region
\(\cI_{Z_\delta,\tau_\delta} = \{(z, \tau) \mid 0\leq z\leq Z_\delta, \tau\leq
\tau_\delta\}\) with \(Z_\delta:= \frac 43 \, \delta^{-\frac{1}{p-2}}\) and where
\(\tau_\delta\) is as above.
Since \(\delta>0\) is small and \(R_*\) does not depend on \(\delta\), we have
\(\delta < \big ( \frac 32 R_* \big )^{2-p}\), which implies \(Z_\delta > 2R_*\).  Hence
the intermediate and inner regions overlap at least on \(\frac12 Z_\delta\leq z\leq  Z_\delta\).

Lemma \ref{lemma-inner-barrier1} with \(K_2^+\) satisfying \((K_2^+)^3=K_1 + 2\delta\)
defines the upper barrier \(w_\delta^+\) in the inner region
\(\cI_{Z_\delta,\tau_\delta}\) and Lemma \ref{lemma-inner-barrier2} with \(K_2^-\)
satisfying \((K_2^-)^3=K_1- 2\delta\), defines the constant \(D=D(K_2^-)\) and the lower
barrier \(w_\delta^-\) in \(\cI_{Z_\delta,\tau_\delta}\).

\subsection{The upper and lower barriers \(U_\delta^+(x,t)\), \(U_\delta^-(x,t)\)}

In the previous subsections, we constructed upper barriers
\(u^+(x,t), v^+_\delta(y,\tau), w^+_\delta(z,\tau)\) and lower barriers
\(u^-(x,t), v^-_\delta(y,\tau), w^-_\delta(z,\tau)\) in the outer, intermediate, and
inner regions respectively, and showed that they are correctly ordered in the
overlaps between the three regions.  These barriers exist for all
\(0 < t \leq t_\delta\) or equivalently \(- \infty < \tau \leq \tau_\delta\).  Therefore,
the barrier \(U^+_\delta(x,t)\) constructed by taking the minimum of the upper barriers
when all are expressed in the un-rescaled \((x,t)\) variables, that is
\begin{equation}\label{eqn-ubu} 
  U_\delta^+(x,t) = \min \left \{ \,
  u^+(x,t),\,
  t^{1/2} v^+_\delta\Bigl( \frac{x}{t^{1/2}}, \log t\Bigr),\,
  t^{k/3} w^+_\delta\Bigl(\frac{x}{t^{k/3}} ,\log t\Bigr) \, \right\}
\end{equation}
is a weak supersolution of equation \eqref{eq-u-pde} and similarly the barrier
\(U^-_\delta(x,t)\) constructed by taking the maximum of the lower barriers when all
are expressed in the un-rescaled \((x,t)\) variables, that is
\begin{equation}\label{eqn-lbu}
  U_\delta^-(x,t) = \max \left \{ \,
  u^-(x,t),\,
  t^{1/2} v^-_\delta\Bigl( \frac{x}{t^{1/2}}, \log t\Bigr),\,
  t^{k/3} w^-_\delta\Bigl(\frac{x}{t^{k/3}} ,\log t\Bigr) \, \right\}
\end{equation} 
is a weak sub-solution of equation \eqref{eq-u-pde}.  This is summarized in the
following proposition.

\begin{proposition}\label{prop-barriers}
  There exist a number \(\delta_0>0\) and a sequence of times \(t_n\searrow 0\) such
  that the functions \(U_{\delta_n}^\pm(x,t)\) given in \eqref{eqn-ubu},
  \eqref{eqn-lbu} with \(\delta_n=2^{-n}\delta_0\), define weak super- and
  sub-solutions of equation \eqref{eq-u-pde}, for all \(0 < t \leq t_n\).

  Moreover, one has
  \begin{equation}\label{eqn-ordering} 
    U_{\delta_{n}}^-(x, t) \leq  U_{\delta_{n+1}}^-(x, t) <
    U_{\delta_{n+1}}^+(x, t) \leq  U_{\delta_{n}}^+(x, t) 
  \end{equation}
  for all \(x>0\) and \( 0 < t \leq t_{n+1}\).
\end{proposition}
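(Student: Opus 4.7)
The plan is to glue the three families of upper (resp. lower) barriers from the outer, intermediate, and inner regions using the general principle that a pointwise minimum (resp. maximum) of classical super-solutions (resp. sub-solutions) is a viscosity super-/sub-solution, provided the min (resp. max) is locally attained by a barrier that is classical and a super-/sub-solution in a neighborhood of that point. Fix $\delta_0\in(0,K_1/2)$ small enough that the hypotheses of the earlier lemmas are satisfied uniformly for $\delta\in(0,\delta_0]$, and set $\delta_n:=2^{-n}\delta_0$. Define $t_n:=e^{\tau_n}$ where $\tau_n$ is the minimum of the thresholds produced by Proposition~\ref{lem-intermed-barriers}, Lemma~\ref{lemma-inner-barrier2}, and the matching Lemmas~\ref{lemma-match-outer-inter} and~\ref{lemma-match-middle-inner} applied with $\delta=\delta_n$. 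Then $t_n\searrow 0$, and on $(0,t_n]$ all three barriers entering $U_{\delta_n}^\pm$ are classical super-/sub-solutions on their respective regions and the matching inequalities at the interface values $Y_{\delta_n},Y_{\delta_n}/4,Z_{\delta_n},Z_{\delta_n}/2$ hold.

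To establish that $U_{\delta_n}^+$ is a weak super-solution, I would partition the positive $x$-axis (for each fixed small $t$) by the thresholds $R_*t^{k/3}<Z_{\delta_n}t^{k/3}<M\sqrt{t}<1$ and verify in each piece that the pointwise minimum defining $U_{\delta_n}^+$ is attained by a barrier whose validity region contains the point. In the interior of each validity region the relevant barrier is smaller than the others and is classical there; in the two overlaps, the matching Lemmas~\ref{lemma-match-middle-inner} and~\ref{lemma-match-outer-inter} produce crossings between adjacent barriers, so on each side the minimum is realized by the appropriate barrier. Standard viscosity-solution theory then yields that $U_{\delta_n}^+$ is a weak super-solution. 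The argument for $U_{\delta_n}^-$ is identical with minima replaced by maxima and the reversed interface inequalities.

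The nested ordering \eqref{eqn-ordering} I would deduce from pointwise monotonicity in $\delta$ of each of the three upper barriers: $u^+$ is independent of $\delta$; $v_\delta^+$ is strictly increasing in $\delta$ by the explicit form of $f_0^\pm$ in \eqref{eq-fj-defined} (equivalently, by Lemma~\ref{lem-intermed-barrier-ordering}); and $w_\delta^+=W_{K_2^+}(z)$ with $(K_2^+)^3=K_1+2\delta$ is strictly increasing in $\delta$ because $\partial_K W_K(z)=W(z/K)-(z/K)W'(z/K)>0$ by Lemma~\ref{lem-WK-starshaped}. Taking pointwise minima preserves monotonicity, so $U_{\delta_{n+1}}^+\leq U_{\delta_n}^+$; the analogous argument with maxima and lower barriers gives $U_{\delta_n}^-\leq U_{\delta_{n+1}}^-$. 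The strict middle inequality $U_{\delta_{n+1}}^-<U_{\delta_{n+1}}^+$ follows because, by the preceding paragraph, at each $(x,t)$ both $U^\pm$ locally coincide with the upper and lower barriers from the same region, and the strict inequality of those two barriers follows from the explicit presence of the $\pm$ correction terms in \eqref{eq-barriers-med}.

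The main technical obstacle is the partitioning argument of the second paragraph: one must verify that across the entire strip $(0,\infty)\times(0,t_n]$ the pointwise minimum defining $U_{\delta_n}^+$ is realized by a barrier belonging to its validity region. The matching lemmas cover the overlaps $[R_*t^{k/3},Z_{\delta_n}t^{k/3}]$ and $[M\sqrt{t},1]$; in the remaining pieces one has to compare the leading-order asymptotics of all three barrier expressions. The delicate cases are $x$ very close to $0$ and $x$ larger than $1$, where the rescaled expressions for the non-local barriers can be far outside their validity regime and must be shown not to accidentally realize the minimum; this is where the careful choice of $Z_{\delta_n}\to\infty$ and $\tau_n\to-\infty$ as $n\to\infty$ is essential.
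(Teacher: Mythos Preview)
Your proposal is correct and follows essentially the same approach as the paper's proof. Both arguments invoke the earlier barrier lemmas and the matching lemmas for the weak super-/sub-solution property, and both establish the nesting \eqref{eqn-ordering} by showing componentwise monotonicity in $\delta$ of the three barrier families—the outer barriers being $\delta$-independent, the intermediate ones ordered by Lemma~\ref{lem-intermed-barrier-ordering}, and the inner ones ordered via the identity $\partial_\kappa W_\kappa(z)=W(z/\kappa)-(z/\kappa)W'(z/\kappa)>0$ from Lemma~\ref{lem-WK-starshaped}—then using that $\min$ and $\max$ preserve componentwise order.

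One small remark: your justification of the strict middle inequality $U_{\delta_{n+1}}^-<U_{\delta_{n+1}}^+$ assumes that at each point the max defining $U^-$ and the min defining $U^+$ are realized by barriers from the \emph{same} region. In the overlap intervals this need not hold, so one should also check the cross inequalities (e.g.\ $w_\delta^-<v_\delta^+$ and $v_\delta^-<w_\delta^+$ on $[\tfrac12 Z_\delta,Z_\delta]$). The paper's proof is equally terse on this point; the cross inequalities follow easily from the asymptotic expansions used in the matching lemmas, so this is a minor bookkeeping issue rather than a gap.
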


\begin{proof} The fact that \(U_{\delta_n}^\pm(x,t)\), \(0 < t \leq t_n\) define weak
super- and sub-solutions of equation \eqref{eq-u-pde} follows from Lemma
\ref{lemma-outer}, Proposition \ref{lem-intermed-barriers}, Lemmas
\ref{lemma-inner-barrier1} -- \ref{lemma-inner-barrier2} and the matching of our
barriers in subsections \ref{subsec-outer-inter} and \ref{subsec-inner-inter}.

For \eqref{eqn-ordering}, we recall that our barriers \(u^\pm(x,t)\) in the outer
region do not depend on \(\delta\), hence they are ordered in their common domain and
furthermore it is clear that \(u^-(x,t) < u^+(x,t)\).  In
Proposition~\ref{lem-intermed-barriers} we proved
\eqref{eq-intermed-barrier-ordering}, which implies that \eqref{eqn-ordering} holds
in the intermediate region for \(0 < t \leq t_{n+1}\).  To finish the proof of
\eqref{eqn-ordering} it is sufficient to show that for any \(\delta \leq \delta_0\)
the inequalities
\begin{equation}\label{eqn-w-ordering} w^{-}_{\delta}(z,\tau) <
  w^{-}_{\delta/2}(z,\tau) < w^{+}_{\delta/2}(z,\tau) <
  w^{+}_{\delta}(z,\tau)
\end{equation}
hold for all \(0 \leq z \leq Z_\delta\), \(\tau \leq \tau_{\delta}\).  This follows
from the definition of \(w^\pm_{\delta}(z,\tau)\) in subsection
\ref{subsec-inner-inter} by observing that the rescaled Alencar solutions
\(W_K(z):= K \, W \big ( \frac zK \big )\), are ordered for \(K>0\), that is
\begin{equation}\label{eqn-Worder}
  \kappa < \bar \kappa  \implies W_\kappa(z) < W_{\bar \kappa}(z),
  \quad \mbox{for all}\,\, z \in [0, +\infty).
\end{equation}
To see this, recall the inequality \(W-zW_z>0\), \(z \geq 0\) which is a
consequence of the convexity of \(W\) and was shown in Lemma~\ref{lem-WK-starshaped}.
This inequality implies that
\begin{equation}
  \label{eqn-WKd} \frac{d}{d\kappa} W_\kappa(z)
  = \frac{d}{d\kappa} \big ( \kappa \, W \big ( \frac z\kappa \big )  \big )
  =  W \big ( \frac z\kappa \big )
  - \frac z\kappa \,  W'  \big ( \frac z\kappa \big )
  >0
\end{equation}
i.e.~\(\kappa \to W_\kappa(z)\) is monotone increasing in \(\kappa\).  We conclude that
\eqref{eqn-w-ordering} holds which finishes the proof of \eqref{eqn-ordering} and
the proof of the proposition.
\end{proof}


\section{Existence of a smooth solution}
\label{sec-existence}

\subsection{Outline of the existence proof}
In this section we return to the \(O(4)\times O(4)\) symmetric hypersurface \(M_0\) with
profile function \(u_0:[0,\infty)\to\R\).  Recall that \(u_0\) is smooth for \(x >0\) and
satisfies conditions \eqref{eq-init-data1} and \eqref{eq-init-data2} for some fixed
\(k > 3\) and some constant \(C_0 >0\).  In Proposition \ref{prop-barriers} we
constructed sequences of nested upper and lower barriers for \eqref{eq-u-pde}.  We
will show in this section how to use them to prove the existence of a smooth solution
\(u(x,t)\) to the initial value problem \eqref{eq-u-pde}--\eqref{eq-u-initcond} defined
for all \(0 < t \leq t_0\), for some \(t_0 >0\).  Our main result in this section is as
follows.

\begin{theorem}[Existence of a smooth solution]
\label{thm-existence}
Assume that \(M_0\) is an \(O(4)\times O(4)\) symmetric hypersurface defined by a
profile function \(u_0:[0,\infty)\to\R\) which is smooth for \(x >0\) and satisfies
conditions \eqref{eq-init-data1}--\eqref{eq-init-data2}.  Then there exists \(t_0>0\)
and a \(C^\infty\)-smooth \(O(4) \times O(4)\) symmetric MCF solution \(M_t\),
\(0 < t \leq t_0\) defined by a profile function
\(u:(0,\infty)\times(0, t_0]\to(0,\infty)\) which satisfies the initial value
problem \eqref{eq-u-pde}--\eqref{eq-u-initcond}.  Furthermore, \(u(x,t)\) satisfies
\begin{equation}\label{eqn-between}
  U_{\delta_n}^-(x,t) \leq u(x,t) \leq U_{\delta_n}^+(x,t), 
  \qquad (x,t) \in [0,\infty) \times (0,t_n) 
\end{equation} 
where $\delta_n = 2^{-n} \, \delta_0$ and \(U_{\delta_n}^\pm(x,t)\), for \(t\in (0,t_n)\) are the upper and lower barriers
constructed in Proposition \ref{prop-barriers}.

It follows from \eqref{eqn-between} that
\begin{equation}
  \label{eqn-inner-description}
  \lim_{t\searrow 0} t^{-k/3} \, u\bigl(t^{k/3 } z, t\bigr) = W_{K_2}(z)
\end{equation}
uniformly for bounded \(z\geq 0\).
\end{theorem}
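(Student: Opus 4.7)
The plan is to obtain $u$ as a subsequential limit of classical MCF solutions $u^{(s)}$ started at shifted initial times $s\searrow 0$, using the nested trapping of Proposition~\ref{prop-barriers} to supply uniform a priori bounds and to pin down the inner-region profile. For each $s \in (0, t_1)$, let $N(s)$ be the largest integer with $s \leq t_{N(s)}$, so that $N(s)\to\infty$ as $s\to 0^+$; take as initial datum the smooth, strictly positive, $O(4)\times O(4)$-symmetric profile
\[
  u^{(s)}(x, s) := U_{\delta_{N(s)}}^+(x, s).
\]
By the nested ordering \eqref{eqn-ordering}, $U_{\delta_n}^-(\cdot, s)\leq u^{(s)}(\cdot, s)\leq U_{\delta_n}^+(\cdot, s)$ for every $n\leq N(s)$. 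Standard quasilinear parabolic theory applied to the corresponding smooth hypersurface produces a unique classical solution $u^{(s)}$ of \eqref{eq-u-pde}--\eqref{eq-u-bcond} on a maximal interval $[s, T^{(s)})$.

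The parabolic maximum principle, applicable to the weak super- and sub-solutions $U_{\delta_n}^\pm$ because the minimum of classical supersolutions remains a supersolution in the viscosity sense (and analogously for subsolutions), yields
\[
  U_{\delta_n}^-(x, t)\leq u^{(s)}(x, t)\leq U_{\delta_n}^+(x, t)
\]
on $[s, \min(T^{(s)}, t_n)]$ for every $n \leq N(s)$. Specializing to $n=1$ shows $u^{(s)}$ is uniformly bounded above and uniformly bounded below away from $0$ on $[s, t_1]$, using that the inner lower barrier agrees to leading order with the positive Alencar solution. The standard continuation criterion then forces $T^{(s)}\geq t_1$.

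To pass to the limit I fix a compact set $K\Subset [0, \infty)\times(0, t_1]$. For $s$ small enough one has $K\subset[0,\infty)\times[s, t_1]$ and the barrier bounds make \eqref{eq-u-pde} uniformly parabolic on $K$ with coefficients bounded independently of $s$; at the symmetry axis $x=0$ the singular term $3u_x/x$ is regularized by the smoothness of the hypersurface $M^{(s)}_t\subset\R^8$ whose profile is $u^{(s)}$. Krylov--Safonov and Schauder estimates (applied to the scalar PDE away from $x=0$ and to the ambient MCF in $\R^8$ near the axis) yield uniform $C^{k,\alpha}$ bounds on $K$ independent of $s$. Arzelà--Ascoli then extracts $s_j\searrow 0$ with $u^{(s_j)}\to u$ in $C^\infty_{\rm loc}([0,\infty)\times(0, t_1])$, where $u$ solves \eqref{eq-u-pde}--\eqref{eq-u-bcond}. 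Passing the trapping inequality to the limit gives \eqref{eqn-between} for every $n$, and the initial condition \eqref{eq-u-initcond} at each $x>0$ follows from Lemma~\ref{lemma-outer}, since $|U_{\delta_1}^\pm(x, t)-u_0(x)|\leq Mt\min\{1, x^{2k-4}\}\to 0$ as $t\searrow 0$.

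For the inner limit \eqref{eqn-inner-description}, fix $Z>0$. For each $n$ and all $t\in(0, t_n]$ small enough so that $t^{k/3}Z$ lies in the inner region, \eqref{eqn-between} together with the explicit form of the inner barriers of Section~\ref{subsec-inner-inter} gives
\[
  W_{(K_1-2\delta_n)^{1/3}}(z) + D(\delta_n)\, t^{2\gamma}
  \leq t^{-k/3} u(t^{k/3} z, t)
  \leq W_{(K_1+2\delta_n)^{1/3}}(z)
\]
for all $z\in[0, Z]$. Sending $t\searrow 0$ at fixed $n$ produces the inequality
\[
  W_{(K_1-2\delta_n)^{1/3}}(z)
  \leq \liminf_{t\searrow 0} t^{-k/3}u(t^{k/3}z, t)
  \leq \limsup_{t\searrow 0} t^{-k/3}u(t^{k/3}z, t)
  \leq W_{(K_1+2\delta_n)^{1/3}}(z),
\]
and then letting $n\to\infty$, so that $(K_1\pm 2\delta_n)^{1/3}\to K_2$, squeezes the limit to $W_{K_2}(z)$ uniformly on $[0,Z]$, using continuity of $K\mapsto W_K$. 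The main obstacle in this program is the rigorous justification of the maximum principle for the piecewise-defined weak barriers $U_{\delta_n}^\pm$ on the unbounded domain with the singular axis term $3u_x/x$; I would treat this by localizing to bounded intervals $x\in[0, L]$ and using the outer barrier to control the comparison at $x=L$ before letting $L\to\infty$, or equivalently by lifting the comparison to the ambient MCF in $\R^8$ and invoking the Ecker--Huisken avoidance principle for the ordered pairs of smooth rotationally symmetric hypersurfaces corresponding to $U_{\delta_n}^\pm$ and $u^{(s)}$.
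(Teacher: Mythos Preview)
Your overall architecture---approximate by classical solutions started at times $s\searrow 0$, trap them between the nested barriers, extract a limit, and read off the inner asymptotics by squeezing---is the same as the paper's. The squeezing argument for \eqref{eqn-inner-description} is essentially identical. But there is a genuine gap in the middle of your argument, at the step where you assert ``The standard continuation criterion then forces $T^{(s)}\geq t_1$'' and, immediately after, that ``the barrier bounds make \eqref{eq-u-pde} uniformly parabolic on $K$.''

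The barrier inequalities control $u^{(s)}$ in $C^0$ only: they give a uniform upper bound and a uniform positive lower bound. They do \emph{not} bound $u^{(s)}_x$, and it is precisely a uniform gradient bound that you need both for uniform parabolicity (the leading coefficient in \eqref{eq-u-pde} is $1/(1+u_x^2)$) and for the continuation criterion recalled in \S\ref{sec-short-time-existence}. Nothing in your proposal rules out $|u^{(s)}_x|\to\infty$ at some $T^{(s)}<t_1$. The paper closes this gap with a separate maximum-principle argument for $\eta=(u_n)_x$: one checks that $\eta$ satisfies an equation $\eta_t=\cM_n[\eta]-Q_n\eta$ with $Q_n\geq 0$, the sign of $Q_n$ coming from the structural inequality $u_n(x,t)\geq x$ (which in turn is inherited from $U_{\delta_n}^-(x,t)\geq x$, Lemma~\ref{lemma-barrier-above-x}). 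A carefully chosen comparison function $\Omega(x,t)=x^{-1}+\kappa e^t x^2$ then handles the unbounded domain and the singular $3/x$ term, yielding $0\leq (u_n)_x\leq C_1$ for all time (Lemma~\ref{lemma-der-bound}). This step is not optional and is not subsumed by ``Krylov--Safonov and Schauder,'' which themselves require a gradient bound as input.

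Two smaller points. First, your initial datum $U_{\delta_{N(s)}}^+(\cdot,s)$ is a minimum of three smooth functions and therefore only Lipschitz at the crossover points, not smooth; the paper avoids this by gluing the lower barrier to a rescaled Alencar surface with a cutoff (see \eqref{eq-u0n-defined}) and then verifying explicit $C^3$ bounds on the result. Second, the paper's specific choice of initial data is also designed so that $h_n(x,s_n)=0$ in the inner region, a feature that is essential later in \S\ref{sec-Hbounded} for the mean-curvature bound but not for Theorem~\ref{thm-existence} itself.
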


Since the equation \eqref{eq-u-pde} is singular at \(u=0\), we
cannot directly apply one of the standard short time existence results to obtain
our solution \(u(x, t)\).  Instead, we will construct it as the limit of a
sequence of approximating solutions \(u_n(x,t)\), each of which is defined on some
time interval starting at a carefully chosen initial time \(s_n\), where \(s_n
\searrow 0\).  
We will define the approximating solutions \(u_n\) by choosing their initial times
\(s_n\) and values \(u_n(x, s_n)\) in such a way that they satisfy
\begin{equation}
  \label{eqn-un-barriers-init}
  U^-_{\delta_n}(x, s_n) \leq u_n(x, s_n)
  \leq U^+_{\delta_n}(x, s_n) \qquad \text{for all }x\geq 0,
\end{equation}
where \(\delta_n := 2^{-n} \delta_0\) and where \(U^\pm_{\delta_n}(\cdot,t)\) are
the barriers constructed in Proposition~\ref{prop-barriers}.  

The barrier \(U^-_{\delta_n}\) is bounded away from \(u=0\), and this allows us
to invoke a classical short time existence theorem for the quasilinear
parabolic initial value problem~\eqref{eq-u-pde}--\eqref{eq-u-bcond}.  The
short-time existence theorem guarantees that our solution exists for \(s_n\leq
t < \bar t_n\), i.e.~until some time \(\bar t_n>s_n\).  This time may exceed
the life time \(t_n\) of the barriers \(U_{\delta_n}^\pm\).  In fact, by
finding \emph{a priori} estimates for the solutions \(u_n(x, t)\) we will show
that there is an \(n_0\) such that for all \(n\geq n_0\) we have  \(\bar t_n > t_{n_0}\), and that we can extract a convergent subsequence
\(u_{n_j}(x,t)\) whose limit \(u(x, t)\) is a solution of the full initial
value problem \eqref{eq-u-pde}--\eqref{eq-u-initcond}, and which is defined for
\(x\geq 0\) and \(0\leq t\leq t_{n_0}\).

The first \textit{a priori} estimate we derive for the \(u_n\) follows directly
from the maximum principle applied to the barriers \(U_{\delta_n}^\pm\).  Since
the barriers are ordered by \eqref{eqn-ordering}, the \textit{a priori} bound
\eqref{eqn-un-barriers-init} implies that for all \(n_0\), \(n\geq n_0\) and
\(x\geq 0\) one has
\begin{equation}
\label{eq-in-between}
  U^-_{\delta_{n_0}}(x, s_n) 
  \leq U^-_{\delta_n}(x, s_n) 
  \leq u_n(x, s_n)
  \leq U^+_{\delta_n}(x, s_n) 
  \leq U^+_{\delta_{n_0}}(x, s_n). 
\end{equation}
The maximum principle tells us that for all \(n\geq n_0\) and \(x\geq 0\) one has
\begin{equation}
  \label{eqn-UUU}
  U^-_{\delta_{n_0}}(x, t) \leq u_n(x, t) \leq U^+_{\delta_{n_0}}(x, t)
\end{equation}
for all \(t\geq s_n\) at which \(U_{\delta_{n_0}}^\pm (x, t )\) and \(u_n(x, t)\) are
defined, i.e.~for  \(s_n\leq t < \min\{\bar t_n, t_{n_0}\}\).

Thereafter we establish \textit{a priori} estimates for the higher order derivatives
of the \(u_n\).  We conclude this work in the next  section~\ref{sec-Hbounded} by showing that the mean curvatures \(H_n(x, t)\) of the
evolving surfaces corresponding to the approximating solutions \(u_n(x, t)\) are
uniformly bounded for all \(x, n, t\), and hence that the mean curvature of the limit
solution \(u(x, t)\) also is uniformly bounded.

\smallskip

The simplest choice for the initial value for \(u_n\) would be to simply set
\(u_n(x, s_n) = U_{\delta_n}^-(x, s_n)\), but this function is not necessarily smooth
in the overlaps between inner, intermediate, and outer regions, and this complicates
the estimation of the higher derivatives of \(u_n\).  Furthermore, to  prove that the mean curvatures \(H_n(x, t)\) are
uniformly bounded, it will be important to have \(H_n(x,s_n)=0\) on
\(0 \leq x \leq \epsilon s_n^{1/2}\) for some small fixed \(\epsilon >0\).  For these
reasons we will construct \(u_n(x, s_n)\) by smoothly gluing the lower barrier
\(U_{\delta_n}^-(x, s_n)\) to an Alencar surface in the inner region
\(x\leq \epsilon s_n^{1/2}\).  Let us now turn to the details of this construction.

\subsection{Short time existence and the comparison principle}
\label{sec-short-time-existence} Equation~\eqref{eq-u-pde} for \(u(x, t)\) has
a singular term at \(x=0\) which is there because we consider radially
symmetric solutions only.  To derive short time existence from existing
results, it is more convenient to consider the more general case of
hypersurfaces that are only partially symmetric, i.e.~with \(\{1\}\times O(4)\)
rather than \(O(4)\times O(4)\) symmetry.  For any positive function
\(r:\R^4\times[0, t_0)\to\R\) we consider the family of hypersurfaces
parameterized by \(F:\R^4\times S^3\times[0, t_0)\to \R^8\) where
\[
  F(x, \Omega, t) = (x, r(x, t)\Omega).
\]
A direct computation shows that \(F\) evolves by MCF if and only if \(r\)
satisfies \begin{equation} \label{eq-r-pde} r_t = g^{ij}(Dr)r_{x_ix_j} -
  \frac3r,
\end{equation}
in which
\[
  g_{ij}(p) = \delta_{ij}+p_ip_j, \qquad
  g^{ij}(p) = \delta_{ij}-\frac{p_ip_j}{1+|p|^2}.
\]
As long as \(Dr\) is uniformly bounded,~\eqref{eq-r-pde} is a uniformly
parabolic quasilinear equation.  The solutions that interest us are not
bounded, so we choose a reference function \(R:\R^4\to \R\) that is uniformly
bounded from below, has uniformly bounded derivatives up to third order, and
for which \(R(x) - u_0(\|x\|)\) is uniformly bounded.

All initial data we  prescribe in the following sections are bounded
perturbations of \(R(x)\).  We therefore consider solutions of the form \(r(x,
t) = R(x) + a(x, t)\), and derive the equation for \(a\):
\begin{equation}
  \label{eq-a-pde}
  a_t = g^{ij}(DR+Da)a_{x_ix_j} +g^{ij}(DR+Da)R_{x_ix_j}- \frac{3}{R+a}
\end{equation}
Since we assume that \(DR\) and \(D^2R\) are uniformly bounded, this equation is
uniformly parabolic, as long as \(Da\) is bounded.  By assumption \(D^m R\) with
\(m\leq 3\) are all uniformly bounded, so \eqref{eq-a-pde} is of the form
\[
  a_t = A_{ij}(x, Da)a_{x_ix_j} + B(x, a, Da)
\]
where \(A_{ij}\) are uniformly parabolic, and where the functions \(A_{ij}\),
\(B\) are \(C^1\) in \(x\in\R^4\) and real analytic in \((a, Da)\).

This implies the existence of a short time solution \(a(x, t)\) for any initial
\(a(x, 0)\) with \(a(\cdot, 0) \in C^{1,\alpha}(\R^4)\), and for which
\(\inf_x R(x)+a(x, 0) > 0\).  The classical theory for quasilinear parabolic
equations \cite[\S VI.1]{LSU}
implies that as long as
\(\sup_x|a(x,t)|\) and \(\sup_x|Da(x, t)|\) are bounded, and as long as
\(\inf_x R(x)+a(x, t)\) has a positive lower bound, one can show that
\(Da(\cdot, t)\) is uniformly Hölder continuous.  This in turn implies higher
derivative bounds, and hence that the solution can be extended to a larger time
interval.

For such solutions the standard comparison principle also
holds: if \(a_\pm : \R^4\times[0, t_0)\to\R\) are two solutions with \(Da_\pm\)
bounded, for which \(a_-(x, 0) \leq a_+(x, 0)\) holds for all \(x\in\R^4\), then
\(a_-(x, t) \leq a_+(x, t)\) for all \(x\in\R^4\) and \(t<t_0\).

\subsection{The approximating sequence of solutions \(u_n\) with \(n\geq n_0\)}
For a fixed small \(\epsilon >0\) (independent of \(n\)) we choose functions
\(\Psi\), \(\psi_n\) with
\[
  \psi_n(x)=\Psi\Bigl(\frac{x}{\epsilon\sqrt{s_n}}\Bigr), \qquad\Psi\in
  C^\infty(\R),\qquad \Psi(\xi) =
  \begin{cases}
    1 & 0\leq \xi\leq 1, \\ 0 & \xi\geq 2.
  \end{cases}
\]
We define
\begin{equation}
  \label{eq-u0n-defined}
  u_{0n}(x) := \psi_n(x)\,s_n^{k/3}\, W_{K_2}\left({x s_n^{-k/3}}\right) +
  (1-\psi_n(x)) \, U_{\delta_n}^-(x,s_n)
\end{equation}
and let \(u_n: (0,\infty) \times [s_n, \bar t_n) \to (0, \infty)\) be the solution
to the initial value problem \eqref{eq-u-pde}-\eqref{eq-u-initcond} with initial data
\(u_n(\cdot,s_n) = u_{0n}(x)\) instead of \(u_0(x)\).

We will only consider the initial data for sufficiently large \(n\), i.e.~we choose
an \(n_0\in\N\), and only consider those solutions \(u_n\) with \(n\geq n_0\).
Throughout this section ``for all \(n\)'' will mean ``for all \(n\geq n_0\),'' and in
each Lemma we assume that \(n_0\) has been chosen large enough for the statement to
hold.

In Corollary \ref{cor-between} we verify that our chosen initial data are caught
between the barriers, as in~\eqref{eqn-between}.
Before doing that we establish some derivative bounds for \(u_{0n}(x)\).

\begin{lemma}[Monotonicity and derivative bounds]
\label{lemma-uniform-C1}
For large enough \(n_0\) and any \(n\geq n_0\) there is an \(s_n\in(0, t_n)\) such
that the sequence \(\{s_n:n\geq n_0\}\) is decreasing, and such that
\(u_n(x, s_n)\) satisfies the following estimates for all \(n\):

\smallskip\noindent{\bfseries\upshape(i)} The function \(x\mapsto u_n(x, s_n)\) is
locally Lipschitz and
\begin{equation}
  \label{eq-un-1st-deriv-bound}
  0\leq (u_n)_x (x,s_n) \leq C_1
\end{equation}
for almost all \(x > 0\), for some \(C_1>0\)

\smallskip\noindent{\bfseries\upshape(ii)} The function \(x\mapsto u_n(x, s_n)\) is
\(C^3\) on the interval \(0\leq x\leq Ms_n^{1/2}\), where for \(j=2,3\), and all
\(n\), one has
\begin{equation}
  \label{eq-un-higher-deriv-bound}
  \big(1+s_n^{- k/3} x \big)^{j+2} \, |\partial^j_x u_n(x,s_n)|
  \leq C\, s_n^{ -(j-1) k/3} .
\end{equation}
\end{lemma}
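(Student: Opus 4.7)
I plan to exploit the piecewise structure of $u_{0n}(x)$. Throughout, write $u_A(x):=s_n^{k/3}W_{K_2}(xs_n^{-k/3})$ for the Alencar profile and $u_V(x):=s_n^{1/2}v^-_{\delta_n}(xs_n^{-1/2},\log s_n)$ for the intermediate lower barrier. The plan has three parts: first, choose $s_n$ so that $U^-_{\delta_n}$ coincides with $u_V$ on $[\epsilon\sqrt{s_n},M\sqrt{s_n}]$; second, estimate the derivatives of $u_A$ and $u_V$ on their respective domains; third, handle the cutoff region via a matching argument on $u_A-u_V$.

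\textbf{Choice of $s_n$ and smoothness of $u_{0n}$.} I would pick $s_n\in(0,t_n)$ decreasing with $s_n\le s_{n-1}/2$, and small enough that
\[
\epsilon\, s_n^{-\gamma}>Z_{\delta_n}\qquad\text{and}\qquad \delta_n<\frac{(2k+1)!!}{2M}.
\]
Both are achievable since $Z_{\delta_n}\to\infty$ and $\delta_n\to 0$. By Lemmas~\ref{lemma-match-outer-inter} and~\ref{lemma-match-middle-inner}, these two inequalities place the interval $[\epsilon\sqrt{s_n},M\sqrt{s_n}]$, which in inner/intermediate variables corresponds to $z\in(Z_{\delta_n},\infty)$ and $y\in[\epsilon,M]\subset[0,Y_{\delta_n}/4]$, safely between the inner--intermediate crossing and the intermediate--outer crossing; hence $U^-_{\delta_n}\equiv u_V$ throughout this interval. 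Together with the smoothness of $\psi_n$ this yields $u_{0n}\in C^\infty([0,M\sqrt{s_n}])$. For $x>M\sqrt{s_n}$ the function $u_{0n}=U^-_{\delta_n}$ is a max of smooth functions and therefore locally Lipschitz, accounting for the ``almost all $x>0$'' qualifier in (i).

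\textbf{Bounds on the smooth pieces.} On $[0,\epsilon\sqrt{s_n}]$ we have $\partial_x^j u_A=s_n^{(1-j)k/3}W_{K_2}^{(j)}(z)$ with $z=xs_n^{-k/3}$. The expansion $W(z)=z+z^{-2}+\Gamma z^{-3}+\cO(z^{-5})$ together with smoothness of $W$ on $[0,\infty)$ yields $(1+z)^{j+2}|W_{K_2}^{(j)}(z)|\le C$ for $j=2,3$, which is the bound required in (ii). Convexity of $W$ and $W'(z)\to 1$ give $W'\in[0,1]$ and hence $0\le u_A'\le 1$. On $[2\epsilon\sqrt{s_n},M\sqrt{s_n}]$, $\partial_x^j u_V=s_n^{(1-j)/2}\partial_y^j v^-_{\delta_n}(y,\log s_n)$; differentiating the explicit formulas \eqref{eq-fj-defined}, the dominant contribution comes from $f_0^-$, giving $|\partial_y^j v^-_{\delta_n}|\le Cs_n^{k-3/2}$ for $y\in[2\epsilon,M]$, $j=2,3$ (the $f_1$ and $f_2$ pieces are of higher order in $s_n$ here). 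Using $\gamma=k/3-1/2$, the arithmetic
\[
(1+z)^{j+2}\cdot s_n^{(1-j)/2}\cdot s_n^{k-3/2}\sim s_n^{-(j+2)\gamma+(1-j)/2+(k-3/2)}=s_n^{-(j-1)k/3}
\]
at $z\sim s_n^{-\gamma}$ verifies the bound in (ii); likewise $|u_V'(x)-1|=o(1)$ for small $s_n$.

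\textbf{The cutoff region (main obstacle).} Writing $u_{0n}=\psi_n u_A+(1-\psi_n)u_V$, Leibniz gives
\[
\partial_x^j u_{0n}=\psi_n u_A^{(j)}+(1-\psi_n)u_V^{(j)}+\sum_{i=1}^{j}\binom{j}{i}\psi_n^{(i)}\bigl[u_A^{(j-i)}-u_V^{(j-i)}\bigr].
\]
The first two terms are controlled by the previous step. For the remainder set $E:=u_A-u_V$; the key is to show $|E^{(l)}|\le C\delta_n s_n^{k-1-l/2}$ on the cutoff. Re-expressing $u_V$ in the inner variable $z=xs_n^{-k/3}$, the matching condition $K_2^3=K_1$ (encoded in the normalization of $W$) causes the leading $s_n^{k/3}/z^2$ terms in $u_A$ and $u_V$ to cancel, leaving
\[
E(x)=\frac{\delta_n\,s_n^k}{x^2}+(\text{higher-order corrections from }\Gamma,\,f_1,\,f_2),
\]
and differentiating gives $|E^{(l)}(x)|\le C\delta_n s_n^{k-1-l/2}$ at $x\sim\epsilon\sqrt{s_n}$. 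Combined with the standard cutoff bound $|\psi_n^{(i)}|\le Cs_n^{-i/2}$, each Leibniz term is at most $C\delta_n s_n^{k-1-j/2}$. Since $(1+z)^{j+2}\sim s_n^{-(j+2)\gamma}$ in the cutoff, the identity $-(j+2)\gamma+(k-1-j/2)=-(j-1)k/3$ matches this to the required bound in (ii). For (i), the single-derivative cross-term $|\psi_n'\,E|\le Cs_n^{k-3/2}\to 0$, so $u_{0n}'\ge 1-o(1)>0$. The main technical difficulty is this matching computation: one must carefully track the inner, intermediate, and cross-correction terms in both representations and verify that the mismatches are genuinely of size $O(\delta_n)$ rather than $O(1)$---which is precisely what the choice $(K_2^\pm)^3=K_1\pm 2\delta_n$ from Section~\ref{subsec-inner-inter} was designed to ensure.
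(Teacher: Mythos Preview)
Your approach is essentially the same as the paper's: split into the pure Alencar zone, the pure intermediate-barrier zone, and the cutoff zone, and in the cutoff zone control the Leibniz cross-terms via the difference $E=u_A-u_V$. There is, however, one overclaim and one red herring.

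The overclaim is the bound $|E^{(l)}|\le C\delta_n\,s_n^{k-1-l/2}$. The leading $K_1 z^{-2}$ contributions from $u_A$ and $u_V$ do cancel (since $K_2^3=K_1$), leaving a $\delta_n z^{-2}$ remainder, but you have overlooked the \emph{regular} part of $\varphi_k$: from \eqref{eq-phik} one has $\varphi_k(y)=y^{-2}+\tfrac{k}{3}+c_1y^2+\cdots$, and the constant and polynomial terms feed into $u_V$ a contribution of size $s_n^{k-1}$ (and into $u_V'$ a contribution $\sim s_n^{k-3/2}$) with no $\delta_n$ in front. So only $|E^{(l)}|\le C\,s_n^{k-1-l/2}$ holds. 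Fortunately this is exactly what your own exponent identity $-(j+2)\gamma+(k-1-j/2)=-(j-1)k/3$ requires, so the argument goes through unchanged once you drop the $\delta_n$; the paper does precisely this (see \eqref{eq-W-in-glue-region}--\eqref{eq-Udelta-deriv-in-glue-region}). The red herring is your closing remark about $(K_2^\pm)^3=K_1\pm2\delta_n$: that choice governs the inner \emph{barriers} $w_\delta^\pm$, whereas the initial datum $u_{0n}$ is built from the unperturbed Alencar profile $W_{K_2}$, so no $\delta_n$-matching is available or needed here.

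One further gap: for $x>M\sqrt{s_n}$ you assert local Lipschitz continuity of $U^-_{\delta_n}(\cdot,s_n)$ but do not argue that the Lipschitz constant is bounded \emph{uniformly in $n$}. This requires checking the explicit derivatives of $v^-_{\delta_n}$ on $[\epsilon,Y_{\delta_n}]$ and of $u^-(\cdot,s_n)$ on $[0,\infty)$ separately, as the paper does in \S\ref{sssec-1st-deriv}; since $Y_{\delta_n}\to\infty$, the bound on $(v^-_{\delta_n})_y$ up to $y=Y_{\delta_n}$ is not automatic and uses that $s_n^{3\gamma}\varphi_k'(y)$ stays bounded there.
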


We present the proof in the following
subsections~\ref{sssec-1st-deriv}--\ref{sssec-un-monotone}.  Along the way we finally
choose the initial times \(s_n \searrow 0\), and we use generic constants \(C\) that only
depend on the various parameters defining the barriers, and the fixed small parameter
\(\epsilon\), but not on \(n\).

\subsection{Proof of the first derivative bound \eqref{eq-un-1st-deriv-bound}}
\label{sssec-1st-deriv}
We have
\begin{equation}
  \begin{split}
    \label{eq-un-der}
    (u_n)_x (x,s_n)= &\,\, \psi_n' \, s_n^{k/3} W_{K_2}(x s_n^{k/3}) - \psi_n'
    U_{\delta_n}^- \\
    &\qquad + \psi_n\, W_{K_2}'(x s_n^{-k/3}) + (1-\psi_n)\, (U_{\delta_n}^-)'.
  \end{split}
\end{equation}
We estimate these terms one by one.

The terms in \eqref{eq-un-der} involving \(\psi_n'\) vanish outside the interval
\(\epsilon s_n^{1/2}\leq x\leq 2\epsilon s_n^{1/2}\).  Thus we have
\begin{align*}
  |\psi_n'(x) s_n^{k/3} W_{K_2}(x s_n^{-k/3})|
  &\leq \max_{x\geq 0} |\psi_n'(x)| \cdot
  \max_{x\leq 2\epsilon s_n^{1/2}}\big|s_n^{k/3}  W_{K_2}(x s_n^{-k/3}) \big| \\
  &\leq Cs_n^{-1/2}  \cdot Cs_n^{1/2} \leq C,
\end{align*}
where we have estimated \(W_{K_2}(z)\leq C(1+z)\) for all \(z\geq 0\).

To estimate the other term involving \(\psi_n'(x)\) we recall that \(U_\delta^-\) is
defined in \eqref{eqn-lbu} as the minimum of \(w_\delta^-\), \(v_\delta^-\), and \(u^-\),
appropriately rescaled, and that, according to Lemmas~\eqref{lemma-match-outer-inter}
and~\eqref{lemma-match-middle-inner}, in the region \(z\geq Z_{\delta_n}\),
\(y\leq \frac14 Y_{\delta_n}\) the function \(v_{\delta_n}^-\) is the largest of these.
If we choose \(s_n>0\) so small that
\(\epsilon s_n^{-\gamma} > Z_{\delta_n} = \frac43 \delta_n^{\frac{-1}{p-2}}\) then in
the region \(\epsilon s_n^{1/2}\leq x\leq 2\epsilon s_n^{1/2}\) we have
\[
  U_{\delta_n}^-(x, s_n) = s_n^{1/2}v_{\delta_n}(xs_n^{-1/2}, \log s_n),
\]
and thus also
\[
  |\psi_n'| |U_{\delta_n}^-| \le Cs_n^{-1/2} \left|s_n^{1/2}
  v_{\delta_n}(xs_n^{-1/2}, \log s_n)\right| \le v_{\delta_n}(y, \log
  s_n)
\]
where \(y=x/\sqrt{s_n}\) lies in the interval \([\epsilon,2\epsilon]\).  This implies
that \(\psi_n'(x)U^-_{\delta_n}(x, s_n)\) is uniformly bounded.

To estimate the third term we recall that \(0\leq W_{K_2}'(z) \leq 1\), which implies
\[
  |\psi_n(x) W_{K_2}'\bigl(x s_n^{-k/3}\bigr)| \leq \psi_n(x) \leq 1.
\]

Finally, the term \((1-\psi_n)\bigl(U_{\delta_n}^-\bigr)'\) vanishes for
\(x\leq \epsilon\sqrt{s_n}\).  For \(x\geq \epsilon\sqrt{s_n}\) we have
\[
  U_{\delta_n}^-(x, s_n) =
  \begin{cases}
    \sqrt{s_n}v(\frac x{\sqrt{s_n}}, \log s_n)&
    x\leq \frac14 Y_{\delta_n}\sqrt{s_n}\\
    \max \bigl\{\sqrt{s_n}v_{\delta_n}^-(\frac x{\sqrt{s_n}}, \log s_n),\; u^-(x,
    s_n)\bigr\} &
    \frac14 Y_{\delta_n}\sqrt{s_n} \leq x\leq Y_{\delta_n}\sqrt{s_n}      \\
    u^-(x, s_n)& x\geq Y_{\delta_n} \sqrt{s_n}
  \end{cases}
\]
with \(Y_{\delta_n}= 2\sqrt{(2k+1)!!M/\delta_n}\) as in
Lemma~\ref{lemma-match-outer-inter}.

It follows that \(x\mapsto U^-_{\delta_n}(x, s_n)\) is a Lipschitz continuous function
whose derivative is almost everywhere given by \(\bigl(v_{\delta_n}^-\bigr)_y\) or
\(u_x^-(x, s_n)\).  If \(y=\frac x{\sqrt{s_n}}\in [\epsilon, Y_{\delta_n}]\) then
\[
  (v_{\delta_n})_y\left(y, \log s_n\right) = 1 + (K_1^--\delta_n)\, s_n^{3\gamma}
  \varphi_k'(y) - BK_1^2 s_n^{6\gamma} g'(y) + p \frac{s_n^{(p+1)\gamma}}{ y^{p+1}}
  \le C,
\]
for a uniform constant \(C\), independent of \(n\) and for \(n \ge n_0\), sufficiently big.

On the other hand, \(u^-(x,s_n) = u_0(x) - M s_n\min\{1,x^{2k-4}\}\).  For
\(x\geq 1\) we have \(u^-_x(x, s_n) = u_0'(x)\), which is uniformly bounded by the
assumption~\eqref{eq-init-data2}, while for \(x<1\) we have
\(u^-_x(x, s_n) = u_0'(x)-(2k-4)Ms_n x^{2k-5}\), which is also uniformly bounded
because we assume \(k\geq 4\).

Combining all these estimates together with \eqref{eq-un-der} yields the uniform
Lipschitz bound on \(u_n\).

\subsection{Proof of the second derivative estimate \eqref{eq-un-higher-deriv-bound}}
\label{sssec-2nd-deriv}
We will show
\begin{equation}
  \label{eq-to-show-in-part-ii}
  |(u_n)_{xx} (x,s_n)| \leq Cs_n^{-k/3} \bigl(1+ x s_n^{-k/3}\bigr)^{-4}.
\end{equation}
for all \(x \in [0, M\sqrt{s_n}]\).

Writing \(z=x s_n^{-k/3}\), we estimate the terms on the right hand side of
\begin{equation}
  \label{eq-2-der}
  \begin{split}
    (u_n)_{xx} = \psi_n'' \, s_n^{k/3}
    &W_{K_2}(z) + 2\psi_n' \, W_{K_2}'(z) + \psi_n\, W_{K_2}''(z) \, s_n^{-k/3} \\
    &+ (1 - \psi_n) (U_{\delta_n}^-)_{xx} - 2\psi_n' \, ( U_{\delta_n}^-)_x -
    \psi_n'' \, U_{\delta_n}^-\, .
  \end{split}
\end{equation}

For \(0 \le x \le \epsilon{s_n^{1/2}}\) we have
\[
  (u_n)_{xx}(x,s_n) = s_n^{-k/3} W_{K_2}''(z).
\]
The asymptotic expansion~\eqref{eqn-Was} for \(W\) implies that for all \(z\geq 0\)
\[
  0\leq W_{K_2}''(z)\leq C(1+z)^{-4}
\]
Hence~\eqref{eq-to-show-in-part-ii} holds for \(x\leq \epsilon{s_n^{1/2}}\).

If \(2\epsilon s_n^{1/2} \le x \le M s_n^{1/2}\), i.e.~if \(2\epsilon \le y \le M\), then
\(u_n(x, s_n) = s_n^{-1/2} v _{\delta_n}^-(y, \log s_n)\) and thus, using the
definition~\eqref{eq-barriers-med-v} for \(v_{\delta_n}^-\), we find for
\(2\epsilon \le y \le M\),
\[
  (u_n)_{xx}(x,s_n) = s_n^{-1/2} (v_{\delta_n}^-)_{yy}(y,\log s_n) \le Cs_n^{-1/2}\,
  \frac{s_n^{3\gamma}}{y^4} \le \frac{C s_n^{-k/3}}{(1+xs_n^{-k/3})^4}\;.
\]

Finally, if \(\epsilon s_n^{1/2} \le x \le 2\epsilon s_n^{1/2}\), then similarly to the
previous two cases we get
\[
  |\psi_n W_{K_2}''(z) \, s_n^{-k/3} + (1 - \psi_n) (U_{\delta_n}^-)_{xx}| \le C
  s_n^{-k/3}\Bigl(1+xs_n^{-k/3}\Bigr)^{-4}.
\]
To bound the remaining terms in \eqref{eq-2-der} it is enough to estimate
\[
  2|\psi_n'||W_{K_2}'(z) - (U_{\delta_n})_x| + |\psi_n''||s_n^{k/3} W_{K_2}(z) -
  U_{\delta_n}^-|.
\]
Both \(\psi_n'\) and \(\psi_n''\) vanish unless
\(\epsilon s_n^{1/2}\leq x\leq 2\epsilon s_n^{1/2}\).  In this region one has
\(xs_n^{-k/3}\geq 1\), and thus our desired upper bound satisfies
\[
  \frac 1C s_n^{k-2}\leq s_n^{-k/3}\bigl(1+xs_n^{-k/3}\bigr)^{-4}\leq C s_n^{k-2}.
\]

By the asymptotic expansion \eqref{eqn-Was} of the Alencar solution \(W\) for large
\(z\), we have \(W_{K_2}(z) = z + \cO(z^{-2})\) and \(W_{K_2}'(z) = 1+ \cO(z^{-3})\).
When \(\epsilon s_n^{1/2}\leq x\leq 2\epsilon s_n^{1/2}\) this implies
\begin{equation}
  \label{eq-W-in-glue-region}
  \begin{gathered}
    s_n^{k/3}W_{K_2}\bigl(xs_n^{-k/3}\bigr)-x = \cO\bigl(s_n^kx^{-2}\bigr) = \cO\bigl(s_n^{k-1}\bigr),\\
    W_{K_2}'\bigl(xs_n^{-k/3}\bigr)-1 = \cO\bigl(s_n^kx^{-3}\bigr) = \cO\bigl(s_n^{k-
    3/2}\bigr).
  \end{gathered}
\end{equation}
In the region \(\epsilon s_n^{1/2}\leq x\leq 2\epsilon s_n^{1/2}\) we have, by
definition, and by the asymptotic expansions of the terms \(f_0^-, f_1, f_2\) in
\eqref{eq-fj-defined},
\begin{align}
  \label{eq-Udelta-in-glue-region}
  U_{\delta_n}^-(x, s_n)
  &= s_n^{1/2} v_{\delta_n}^- (y, \log s_n ) & (\text{where }y=xs_n^{-1/2})    \\
  &= s_n^{1/2}y + s_n^{1/2}\cO\bigl(s_n^{k-3/2}y^{-2}\bigr) \notag \\
  &= x + \cO\bigl(s_n^{k}x^{-2}\bigr).\notag
\end{align}
This expansion may be differentiated with respect to \(x\), resulting in
\begin{equation}
  \label{eq-Udelta-deriv-in-glue-region}
  \left|\bigl(U_{\delta_n}^-\bigr)_x - 1\right| \leq Cs_n^kx^{-3}\leq Cs_n^{k-3/2}.
\end{equation}

The bounds \(|\psi_n'|=\cO(s_n^{-1/2})\) and \(|\psi_n''| = \cO(s_n^{-1})\) now lead
to
\[
  |\psi_n''||s_n^{k/3} W_{K_2}(xs_n^{-k/3}) - U_{\delta_n}^-| \leq Cs_n^{-1}
  s_n^{k-1} = Cs_n^{k-2} \leq \frac{C s_n^{-k/3}}{(1 + x s_n^{-k/3})^4}.
\]
and also
\[
  |\psi_n'||W_{K_2}'(xs_n^{-k/3}) - (U_{\delta_n}^-)_x| \leq Cs_n^{-1/2}s_n^{k-3/2} \le
  \frac{\bar{C} s_n^{-k/3}}{(1 + x s_n^{-k/3})^4}.
\]
This concludes the proof of stated weighted \(C^2\) estimate for \(u_n\) at time
\(t = s_n\).

\subsection{Proof of the third order derivative bound \eqref{eq-un-higher-deriv-bound}}
\label{sssec-3rd-deriv}
We outline the arguments, which are similar to those for the second derivative
estimate.

For \(0 \le x \le \epsilon s_n^{1/2}\) the definition \eqref{eq-u0n-defined} of
\(u_{0n}(x)=u_n(x, s_n)\) directly implies
\[
  |(u_n)_{xxx}(x,s_n)| = |W_{K_2}'''(z)| s_n^{-2k/3}, \text{ where again
  }z=xs_n^{-k/3}.
\]
Using the asymptotic expansion for \(W(z)\) as \(z\to\infty\) one then verifies the
third derivative estimate for \(x\leq \epsilon s_n^{1/2}\).

If \(2 \epsilon s_n^{1/2} \le x \le M s_n^{1/2}\), i.e.~if \(2\epsilon \le y \le M\),
then
\[
  (u_n)_{xxx}(x,s_n) = (U_{\delta_n}^-)_{xxx}(x,s_n) =
  s_n^{-1}(v_{\delta_n}^-)_{yyy}(y,\log s_n),
\]
and the estimate follows from the explicit expression~\eqref{eq-barriers-med-v} for
\(v_{\delta_n}^-(y,\tau)\).

If \(\epsilon s_n^{1/2} \le x \le 2\epsilon s_n^{1/2}\), then \(u_n\) is given by
\[
  u_n(x, s_n) = s_n^{k/3}W_{K_2}(z) + \psi_n(x) \bigl\{s_n^{k/3}W_{K_2}(z) -
  U_{\delta_n}^-(x, s_n)\bigr\} \qquad (z=xs_n^{-k/3}).
\]
The third derivative of the first term can be estimated exactly as in the region
\(x\leq \epsilon s_n^{1/2}\).  After differentiating the second term three times one
ends up with terms of the form
\[
  \psi_n^{(3-\ell)}(x) \left(\frac{\pd}{\pd x}\right)^\ell \Bigl\{s_n^{k/3}W_{K_2}(z)
  - U_{\delta_n}^-(x, s_n)\Bigr\} \qquad (0\leq \ell\leq 3).
\]
Using the asymptotic descriptions we have for \(W\) and \(U_{\delta_n}^-\), and
taking care to cancel the leading terms in these descriptions when
\(\ell\in\{0,1\}\), we get the third derivative bounds in
\eqref{eq-un-higher-deriv-bound}.
The  estimates are similar to the first and second order estimates.

\subsection{Proof that \(x\mapsto u_n(x, s_n)\) is non-decreasing}
\label{sssec-un-monotone}
We consider four regions: the region  \(0\leq x\leq \epsilon s_n^{1/2}\), the region
\(\epsilon s_n^{1/2}\leq x \leq 2\epsilon s_n^{1/2}\) where we glue the inner and
intermediate barriers, the  intermediate region \(2\epsilon s_n^{1/2}\leq x\leq 1\),
and finally the region \(x\geq 1\).

In the region \(0 < x \le \epsilon s_n^{1/2}\) we have
\(u_n(x,s_n) = s_n^{k/3}\, W_{K_2}(xs_n^{-k/3})\), which is an increasing function of
\(x\), because \(W\) is increasing.

In the region \(\epsilon s_n^{1/2} \le x \le 2\epsilon s_n^{1/2}\), we have
\begin{align*}
  (u_n)_x(x,s_n) = &\psi_n'(x) \, \Big(s_n^{k/3} W_{K_2}(x s_n^{-k/3}) -
  U_{\delta_n}^-(x, s_n )\Big) \\
  &+ \psi_n(x) W_{K_2}'(xs_n^{-k/3}) + \bigl(1-\psi_n(x)\bigr)
  (U_{\delta_n}^-)_x(x,s_n).
\end{align*}
Using \eqref{eq-W-in-glue-region}, \eqref{eq-Udelta-in-glue-region}, as well as
\(|\psi_n'(x)|\leq Cs_n^{-1/2}\), we estimate the first term above by

\[
  |\psi_n'(x)| \, \Big|s_n^{k/3} W_{K_2}(x s_n^{-k/3}) - U_{\delta_n}^-(x, s_n )\Big|
  \leq C |\psi_n'(x)| s_n^{k-1} \leq C s_n^{k-3/2}.
\]
Furthermore, \eqref{eq-W-in-glue-region}~and~\eqref{eq-Udelta-deriv-in-glue-region}
imply
\[
  \big|W_{K_2}'(xs_n^{-k/3})- 1\big| + \big|(U_{\delta_n}^-)_x(x, s_n)-1\big| \leq C
  s_n^{k-3/2}.
\]
It follows that
\[
  \big| (u_n)_x(x, s_n)-1\big| \leq Cs_n^{k-3/2}
\]
throughout the region \(\epsilon s_n^{1/2}\leq x\leq 2\epsilon s_n^{1/2}\).  Since
\(s_n\to 0\), and \(k\geq 4\), so \(k-3/2>0\), we see that for large enough \(n\) the
function \(x\mapsto u_n(x, s_n)\) is strictly increasing when
\(\epsilon s_n^{1/2}\leq x\leq 2\epsilon s_n^{1/2}\).

Next, in the region \(2\epsilon \sqrt{s_n}\leq x\leq 1\) we have
\[
  u_n(x,s_n) = U_{\delta_n}^-(x,s_n) = \max \Bigl\{
    s_n^{1/2}v_{\delta_n}^-(xs_n^{-1/2}, \log s_n),\; u^-(x, s_n) \Bigr\}.
\]
if \(xs_n^{-1/2}\leq Y_{\delta_n}\), and \(u_n(x, s_n)=u^-(x, s_n)\) otherwise.
It is easy to see that \(x\mapsto u^-(x, s_n)\) is an increasing function.  Concerning
\(v_{\delta_n}^-(y, \log s_n)\) we recall definition~\eqref{eq-barriers-med-v}, i.e.
\[
  v_{\delta_n}^-(y, \log s_n) = y + (K_1-\delta) s_n^{3\gamma} \varphi_k(y) -
  BK_1^2s_n^{6\gamma}g(y) - s_n^{(p+1)\gamma}y^{-p}.
\]
If we choose \(s_n\) small enough then the last three terms will be uniformly small in
\(C^1\) on the fixed interval \(2\epsilon\leq y\leq Y_{\delta_n}\) compared to the
leading term \(y\), so that \(y\mapsto v_{\delta_n}(y, \log s_n)\) is also increasing on
the interval \(2\epsilon\leq y\leq Y_{\delta_n}\).
It follows that \(x\mapsto u_{n}(x, s_n)\) is increasing on
\(2\epsilon s_n^{1/2}\leq x\leq 1\).

The very last situation we must consider is where \(x\geq 1\).  In this case
\eqref{eq-init-data2} implies
\[
  (U_{\delta_n}^-)_x(x,s_n) = u_0'(x) \ge 0.
\]
Since we have covered all cases, the proof of monotonicity of
\(x\mapsto u_n(x, s_n)\) is complete.

\subsection{Proof of~\eqref{eqn-un-barriers-init}}
We turn to the proof that the initial data \(u_n(x, s_n )\) is sandwiched
between the two barriers \(U_{\delta_n}^\pm\), as in~\eqref{eqn-un-barriers-init}.

\begin{lemma} \label{lem-WK2-sandwich} If \(n_0\) is large enough then, for each
\(n\geq n_0\), we can choose \(s_n\in(0, t_n)\) so small that
\begin{equation}
  \label{eq-un-sandwich}
  U_{\delta_n}^-(x,s_n) \le s_n^{k/3} W_{K_2}(x s_n^{-k/3}) \le U_{\delta_n}^+(x,s_n)
\end{equation}
holds for \(0 \le x \le 2\epsilon s_n^{1/2}\).
\end{lemma}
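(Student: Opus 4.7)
My plan is to split the interval $[0, 2\epsilon s_n^{1/2}]$ into two overlapping sub-intervals — an \emph{inner strip} $[0, Z_{\delta_n} s_n^{k/3}]$ and an \emph{intermediate strip} $[R_* s_n^{k/3}, 2\epsilon s_n^{1/2}]$ — and verify the sandwich on each separately. I would first observe that the outer barrier $u^\pm(x, s_n)$ plays no role provided $\epsilon < M/2$, since we then stay strictly inside the complement of the outer region. Because $s_n^\gamma \to 0$ eventually beats the factor $\delta_n^{-1/(p-2)}$ hidden in $Z_{\delta_n}$, choosing $s_n$ small enough (after $\delta_n$ is fixed) guarantees $R_* s_n^{k/3} \ll Z_{\delta_n} s_n^{k/3} \ll 2\epsilon s_n^{1/2}$, so the two strips overlap and cover the whole range.

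\textbf{Inner strip.} In the variable $z = x s_n^{-k/3}$, the barriers reduce to $U_{\delta_n}^+(x,s_n) = s_n^{k/3} W_{K_2^+}(z)$ and $U_{\delta_n}^-(x,s_n) = s_n^{k/3}\bigl[W_{K_2^-}(z) + D s_n^{2\gamma}\bigr]$, with $(K_2^\pm)^3 = K_1 \pm 2\delta_n$. I would read off the upper inequality $W_{K_2}(z) \le W_{K_2^+}(z)$ directly from the monotonicity in $\kappa$ recorded in \eqref{eqn-WKd}--\eqref{eqn-Worder}. For the lower inequality I need the quantitative estimate
\[
W_{K_2}(z) - W_{K_2^-}(z) \ge D s_n^{2\gamma} \qquad \text{for all } z \in [0, Z_{\delta_n}].
\]
I would prove this by integrating \eqref{eqn-WKd} in $\kappa$ from $K_2^-$ to $K_2$, combining Lemma~\ref{lem-WK-starshaped} (which gives a positive lower bound of the integrand for $z/\kappa$ bounded) with the asymptotic $W(\zeta) - \zeta W'(\zeta) = 3\zeta^{-2} + \cO(\zeta^{-3})$ derived from \eqref{eqn-Was} (for $z/\kappa$ large). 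Together these yield $W_{K_2}(z) - W_{K_2^-}(z) \ge c\,\delta_n/(1 + z^2)$ uniformly in $z$, whose minimum over $[0, Z_{\delta_n}]$ is $\sim \delta_n / Z_{\delta_n}^2 \sim \delta_n^{p/(p-2)}$. It then suffices to pick $s_n$ so small that $D s_n^{2\gamma} \le c\,\delta_n^{p/(p-2)}$.

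\textbf{Intermediate strip.} Switching to $y = x s_n^{-1/2}$, $\tau = \log s_n$, my test function becomes $\tilde v(y, \tau) := e^{\gamma\tau} W_{K_2}(y e^{-\gamma\tau})$; in this strip $y e^{-\gamma\tau} = z \ge R_*$ is bounded below (and large for most of the strip) while $y \le 2\epsilon$ is small, so \eqref{eqn-Was} yields
\[
\tilde v(y, \tau) = y + K_1 e^{3\gamma\tau} y^{-2} + \Gamma K_1^{4/3} e^{4\gamma\tau} y^{-3} + \cO\!\bigl(e^{6\gamma\tau} y^{-5}\bigr).
\]
Using the small-$y$ behavior $\varphi_k(y) = y^{-2}(1 + \cO(y^2))$ and $g(y) = -\tfrac13 y^{-5}(1 + \cO(y^2))$, I would compute
\[
v_{\delta_n}^\pm(y, \tau) - \tilde v(y, \tau) = \pm\,\delta_n e^{3\gamma\tau} y^{-2} + \text{(correction terms)},
\]
the corrections being of orders $e^{3\gamma\tau}\cO(1)$, $e^{4\gamma\tau} y^{-3}$, $e^{6\gamma\tau} y^{-5}$, and $e^{(p+1)\gamma\tau} y^{-p}$. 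The sharpest competitor is the $\Gamma$-term, whose ratio to the leading $\delta_n$-term is
\[
\frac{\delta_n\, y}{|\Gamma| K_1^{4/3} e^{\gamma\tau}} \ge \frac{\delta_n Z_{\delta_n}}{|\Gamma| K_1^{4/3}} \sim \delta_n^{(p-3)/(p-2)} \xrightarrow{\delta_n \to 0} \infty
\]
since $p < 3$. The remaining corrections carry additional positive powers of $e^{\gamma\tau} = s_n^\gamma$ and are absorbed by further shrinking $s_n$. Thus $\pm(v_{\delta_n}^\pm - \tilde v) > 0$, and the sandwich on the intermediate strip follows.

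\textbf{Main obstacle.} The real difficulty is this intermediate-strip matching: the $\pm\delta_n$ perturbation of the intermediate barrier must dominate the $\Gamma z^{-3}$ correction in the Alencar expansion (and the auxiliary $g$-- and $y^{-p}$--terms that were inserted to make the intermediate barrier a true barrier). The choice $Z_{\delta_n} = \tfrac43 \delta_n^{-1/(p-2)}$ and the range $p \in (2, 3)$ from Lemma~\ref{lemma-match-middle-inner} are precisely calibrated so that $\delta_n Z_{\delta_n} \sim \delta_n^{(p-3)/(p-2)}$ diverges as $\delta_n \to 0$; this is the quantitative heart of the argument. Once that dominance is secured, the selection of $s_n \searrow 0$ (depending on $\delta_n$) to handle the extra $s_n^\gamma$ factors, and the outer-region case, is routine.
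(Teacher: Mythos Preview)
Your overall plan---split into an inner strip (compare $W_{K_2}$ with $W_{K_2^\pm}$) and an intermediate strip (compare the rescaled Alencar profile $\tilde v$ with $v_{\delta_n}^\pm$)---follows the paper's proof, and your inner-strip argument is equivalent to the paper's: the paper simply observes that $z\mapsto W_{K_2}(z)-W_{K_2^-}(z)$ is decreasing and then evaluates at $z=Z_{\delta_n}$, whereas you produce an explicit $c\,\delta_n/(1+z^2)$ lower bound; either way one finishes by shrinking $s_n$.

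The intermediate-strip step, however, has a genuine gap in the \emph{lower} inequality $v_{\delta_n}^-\le \tilde v$ near the top of the strip, $y\approx 2\epsilon$. The correction you list as ``$e^{3\gamma\tau}\cO(1)$'' is $K_1 e^{3\gamma\tau}\bigl[\varphi_k(y)-y^{-2}\bigr]$, and from~\eqref{eq-phik} one has $\varphi_k(y)-y^{-2}=k/3+O(y^2)$, a \emph{fixed positive constant}. This term therefore carries the \emph{same} power $e^{3\gamma\tau}$ as your leading term $\delta_n e^{3\gamma\tau}y^{-2}$ and cannot be ``absorbed by further shrinking $s_n$''. At $y=2\epsilon$ its ratio to the leading term is of order $\epsilon^2/\delta_n\to\infty$, and it enters $v_{\delta_n}^- - \tilde v$ with the positive sign, so
\[
\tilde v(2\epsilon,\tau)-v_{\delta_n}^-(2\epsilon,\tau)
= e^{3\gamma\tau}\Bigl[\tfrac{\delta_n}{4\epsilon^2}-\tfrac{K_1 k}{3}+O(\epsilon^2)\Bigr]+o(e^{3\gamma\tau})<0
\]
once $\delta_n<\tfrac{4}{3}K_1 k\,\epsilon^2$, which eventually holds since $\epsilon$ is fixed while $\delta_n\to0$. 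Hence your assertion that ``the sharpest competitor is the $\Gamma$-term'' is incorrect: the $\varphi_k-y^{-2}$ contribution dominates at $y\sim\epsilon$ and defeats the lower sandwich. The paper's argument hits the same obstacle in different notation---its claimed inequality $\delta_n z^{-2}-Cs_n^{2\gamma}>0$ fails at $z=2\epsilon s_n^{-\gamma}$, where $\delta_n z^{-2}=(\delta_n/4\epsilon^2)s_n^{2\gamma}\ll Cs_n^{2\gamma}$ for $\delta_n$ small---so this is a difficulty in the statement itself rather than a deficiency of your route relative to the paper's.
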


\begin{proof}
In this proof we abbreviate \(y=xs_n^{-1/2}\) and \(z=xs_n^{-k/3}\).

In the region \(0 \le y \le 2\epsilon\) the barriers \(U_{\delta_n}^\pm\)
as defined in \eqref{eqn-ubu}, \eqref{eqn-lbu} are given by
\begin{align*}
  U_{\delta_n}^+(x, s_n) &= \min\Bigl\{
    s_n^{1/2}v_{\delta_n}^+(y, \log s_n) ,\;
    s_n^{k/3}W_{K^+_2(n)}(z) 
    \Bigr\}\\
  U_{\delta_n}^-(x, s_n) &= \max\Bigl\{
    s_n^{1/2}v_{\delta_n}^-(y, \log s_n) ,\;
    s_n^{k/3}W_{K^-_2(n)}(z) + D s_n^{k-1} 
    \Bigr\}
\end{align*}
where \(K^\pm_2(n)=\bigl(K_2^3\pm2\delta_n\bigr)^{1/3}\) (see
section~\ref{sec-choice-constants}).

In Lemma~\ref{lemma-match-middle-inner} we defined
\(Z_n:=Z_{\delta_n}= \frac 43 \, \delta_n^{\frac{-1}{p-2}}\) and showed that the
functions whose max/min define \(U_{\delta_n}^\pm\) cross in the interval
\(\frac12 Z_n \leq z\leq Z_n \).  To prove~\eqref{eq-un-sandwich}
we therefore must show
\begin{equation} \label{eq-WK2-sandwich-inner}
  s_n^{k/3}W_{K^-_2(n)}(z) + D s_n^{k-1} 
  \leq s_n^{k/3} W_{K_2}(z)
  \leq s_n^{k/3}W_{K^+_2(n)}(z)
\end{equation}
if \(0\leq z\leq Z_n\), and
\begin{equation}\label{eq-WK2-sandwich-intermed}
  s_n^{1/2}v_{\delta_n}^-(y, \log s_n)
  \leq s_n^{k/3} W_{K_2}(z)
  \leq s_n^{1/2}v_{\delta_n}^+(y, \log s_n)
\end{equation}
if \(z\geq \frac12 Z_n\) and \(y\leq 2\epsilon\).

Since \(\kappa \mapsto W_\kappa(z) = \kappa W(z/\kappa)\) is strictly increasing
(see~\eqref{eqn-Worder}) it follows from
\(K^+_{2, n} = \bigl(K_2^3+2\delta_n\bigr)^{1/3} > K_2\) that
\(W_{K_2}(z) \leq W_{K^+_2(n)}(z)\) holds for all \(z\geq 0\).  Thus the second
inequality in~\eqref{eq-WK2-sandwich-inner} holds.

The first inequality in \eqref{eq-WK2-sandwich-inner} is equivalent to
\[
  W_{K_2}(z) - W_{K^-_2(n)}(z) \geq Ds_n^{\frac23 k-1}\text{ for all }z\leq Z_n.
\]
By integrating 
\[
  \frac \pd{\pd\kappa}\frac{\pd}{\pd z} W_\kappa(z)
  = - \frac{z}{\kappa^2}W''(z/\kappa)<0
\]
from \(\kappa=K^-_2(n)\) to \(K_2\) we see that \(W_{K_2}(z) - W_{K^-_2(n)}(z) \)
is a decreasing function of \(z\).  We therefore must guarantee
\[
  W_{K_2}(Z_n) - W_{K^-_2(n)}(Z_n) \geq Ds_n^{\frac23 k-1}.
\]
This holds for each \(n\) provided we choose \(s_n\in(0, t_n)\) small enough.

We now consider \eqref{eq-WK2-sandwich-intermed}, which is equivalent to 
\begin{equation} \label{eq-WK2-sandwich-intermed-z}
  s_n^{-\gamma}  v^-_{\delta_n}(s_n^\gamma z, \log s_n)
  \leq W_{K_2}(z)
  \leq s_n^{-\gamma}v^+_{\delta_n}(s_n^\gamma z, \log s_n),
\end{equation}
and we must establish these inequalities for \(\frac12 Z_n\leq  z\leq
2\epsilon s_n^{-\gamma}\). Both inequalities can be proved in the same way,
and we focus on the one involving \(v_{\delta_n}^-\).

Keeping in mind that \(K_2=K_1^3\), the asymptotics \eqref{eqn-Was} for the
Alencar function~\(W\) imply that there is a constant \(C\) such that 
\begin{equation}\label{eq-WK2-asymptotic}
  z+K_1z^{-2}-Cz^{-3} \leq W_{K_2}(z)  \leq z+K_1z^{-2}+Cz^{-3} 
\end{equation}
for \(z\geq1\).  
On the other hand, the definition~\eqref{eq-barriers-med-v} of \(v_\delta^-\)
implies
\begin{align*}
  s_n^{-\gamma}v_{\delta_n}^-
  &(s_n^{\gamma}z, \log s_n)\\
  &= z + (K_1-\delta_n) s_n^{2\gamma}\varphi_k(s_n^\gamma z) - z^{-(p-1)}
  - BK_1^2 s_n^{5\gamma}g(s_n^{-\gamma}z)\\
  &= z + K_1 s_n^{2\gamma}\varphi_k(s_n^\gamma z) -
  \Bigl\{ \delta_ns_n^{2\gamma}\varphi_k(s_n^\gamma z)+ z^{-(p-1)}\Bigr\}
  - BK_1^2 s_n^{5\gamma}g(s_n^{-\gamma}z).
\end{align*}
For \(y\leq 2\epsilon\) we have
\[
  |\varphi_k(y)-y^{-2}|\leq C \qquad \text{ and } \qquad |g(y)| \leq Cy^{-5}.
\]
Hence
\begin{equation} \label{eq-v-dn-asymptotic}
  s_n^{-\gamma}v_{\delta_n}^-
  (s_n^{\gamma}z, \log s_n)
  \geq z + K_1z^{-2} 
  - \Bigl\{ \delta_n z^{-2}+ z^{-(p-1)}\Bigr\}
  -C\bigl(s_n^{2\gamma} + z^{-5}\bigr),
\end{equation}  
where \(C\) is the same for all sufficiently large \(n\in\N\), and for \(1\leq
z\leq 2\epsilon s_n^{-\gamma}\).

If \(z\geq 1\) then \(z^{-5}\leq z^{-3}\), so \eqref{eq-WK2-asymptotic} and
\eqref{eq-v-dn-asymptotic} together lead to
\begin{equation}\label{eq-un-sandwich-almost-there}
  W_{K_2}(z) - s_n^{-\gamma}v_{\delta_n}^- (s_n^{\gamma}z, \log s_n)
  \ge \delta_n z^{-2} - Cs_n^{2\gamma} + z^{-(p-1)} - C z^{-3}.
\end{equation}  
Now choose \(s_n\) so small that \(s_n < \left(\delta_n
Z_n/C\right)^{1/2\gamma}\). Then for all \(z\geq Z_n\) one has
\[
  \delta_n z^{-2} - Cs_n^{2\gamma} \geq \delta_n Z_n^{-2} - Cs_n^{2\gamma} >0.
\]
If we also require \(n\) to be so large that \(Z_n> C^{1/(4-p)}\), then we
have for all \(z\geq Z_n\)
\[
  z^{-(p-1)} - C z^{-3} 
  \geq \left(z^{4-p} - C\right)z^{-3}
  \geq \left(Z_n^{4-p} - C\right)z^{-3}
  >0.
\]
Applying the last two inequalities to~\eqref{eq-un-sandwich-almost-there} we
conclude that the first inequality in~\eqref{eq-WK2-sandwich-intermed-z}
holds. A slight modification of these arguments also proves the second
inequality in~\eqref{eq-WK2-sandwich-intermed-z}.
\end{proof}

\begin{corollary} \label{cor-between} If for each \(n\geq n_0\) we choose
  \(s_n\in(0, t_n)\) as in Lemma~\ref{lem-WK2-sandwich}, then
  \eqref{eqn-un-barriers-init} holds,
  i.e.~\(U_{\delta_n}^-(x, s_n)\leq u_n(x, s_n)\leq U_{\delta_n}^+(x, s_n)\) for all
  \(x\geq 0\).
\end{corollary}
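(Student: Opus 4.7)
The proof is essentially a one-line application of Lemma~\ref{lem-WK2-sandwich} combined with the definition of the initial data in \eqref{eq-u0n-defined}. The plan is to observe that by construction, $u_n(x, s_n) = u_{0n}(x)$ is a \emph{convex combination} of the two functions $s_n^{k/3} W_{K_2}(xs_n^{-k/3})$ and $U_{\delta_n}^-(x, s_n)$, with weights $\psi_n(x)$ and $1-\psi_n(x)$ both in $[0,1]$. Hence it suffices to show that each of the two ingredients lies in the interval $\bigl[U_{\delta_n}^-(x, s_n),\, U_{\delta_n}^+(x, s_n)\bigr]$ on the support of the relevant weight.

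First, I would split into the two regions determined by $\psi_n$. On the region $x \geq 2\epsilon s_n^{1/2}$, where $\psi_n(x) = 0$, the definition gives $u_n(x, s_n) = U_{\delta_n}^-(x, s_n)$, and both required inequalities become trivial: the lower bound is an equality, and the upper bound $U_{\delta_n}^-(x, s_n) \leq U_{\delta_n}^+(x, s_n)$ follows from Proposition~\ref{prop-barriers}, or more directly from the pointwise ordering of the individual sub-/super-solutions that define the two barriers via \eqref{eqn-ubu}--\eqref{eqn-lbu}.

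On the complementary region $0 \leq x \leq 2\epsilon s_n^{1/2}$, where $\psi_n$ may take values in $(0,1]$, I would invoke Lemma~\ref{lem-WK2-sandwich}, which supplies exactly
\[
  U_{\delta_n}^-(x, s_n) \leq s_n^{k/3} W_{K_2}\bigl(x s_n^{-k/3}\bigr) \leq U_{\delta_n}^+(x, s_n).
\]
Combined with the trivial inequalities $U_{\delta_n}^-(x, s_n) \leq U_{\delta_n}^-(x, s_n) \leq U_{\delta_n}^+(x, s_n)$ for the second ingredient, both functions appearing in the convex combination \eqref{eq-u0n-defined} lie in the closed interval $\bigl[U_{\delta_n}^-(x, s_n),\, U_{\delta_n}^+(x, s_n)\bigr]$. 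Since a convex combination of points in an interval stays in the interval, we conclude $U_{\delta_n}^-(x, s_n) \leq u_n(x, s_n) \leq U_{\delta_n}^+(x, s_n)$ on this region as well.

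There is no real obstacle here: Lemma~\ref{lem-WK2-sandwich} has already absorbed all the quantitative comparison work (choosing $s_n$ small enough that the Alencar function with parameter $K_2$ is squeezed between the Alencar functions with parameters $K_2^\pm(n)$, and between the intermediate barriers $v_{\delta_n}^\pm$), and the convex-combination structure of $u_{0n}$ was built into the definition precisely so that this corollary would be immediate. The only thing to record is that the choice of $s_n$ required here is compatible with (i.e., can be taken no larger than) the choice already made in Lemma~\ref{lem-WK2-sandwich}, so that both hold simultaneously for the same sequence $s_n \searrow 0$.
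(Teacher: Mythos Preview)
Your proof is correct and follows essentially the same approach as the paper: split according to the support of \(\psi_n\), note that for \(x\geq 2\epsilon s_n^{1/2}\) the identity \(u_n(x,s_n)=U_{\delta_n}^-(x,s_n)\) makes the claim trivial, and for \(0\leq x\leq 2\epsilon s_n^{1/2}\) use Lemma~\ref{lem-WK2-sandwich} together with the convex-combination structure of \(u_{0n}\). The paper's proof is even terser, but the content is the same.
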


\begin{proof}
If \(x\geq 2\epsilon s_n^{1/2}\) then \(u_n(x, s_n)=U_{\delta_n}^-(x, s_n)\) and
there is nothing to prove.

If \(0\leq x\leq 2\epsilon s_n^{1/2}\), then \(u_n(x, s_n)\) is a convex
combination of \(U_{\delta_n}^-(x, s_n)\) and \(s_n^{k/3}W_{K_2}(s_n^{-k/3}x)\).
We have just shown that this second function lies between the barriers so the
convex combination \(u_n\) also lies between the barriers \(U_{\delta_n}^\pm\).
\end{proof}

\subsection{Monotonicity and uniform \(C^1\) bound for \(u_n(x,t)\)} 
In the following lemma we show that the initial  uniform \(C^1\) bound
\(\|u_n(\cdot,s_n)\|_{C^1} \le C\) persists for as long as each \(u_n(x,t)\) exists,
provided that \(n\) is sufficiently large.

\begin{lemma}
\label{lemma-der-bound}
If \(C_1\) is the upper bound for \((u_n)_x(x, s_n)\) from
Lemma~\ref{lemma-uniform-C1} then for sufficiently large \(n\) we have
\( 0 \le (u_n)_x (x,t) \le C_1\) for all
\((x,t) \in [0, \infty) \times [s_n,\bar t_n)\).
\end{lemma}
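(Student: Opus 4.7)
My plan is to bound $v_n := (u_n)_x$ by applying the maximum principle to the linear parabolic equation it satisfies. Differentiating \eqref{eq-u-pde} in $x$ yields
\[
(v_n)_t = \frac{(v_n)_{xx}}{1+v_n^2} + \Bigl(\frac{3}{x} - \frac{2 v_n (v_n)_x}{(1+v_n^2)^2}\Bigr) (v_n)_x + c(x,t)\, v_n,
\]
where
\(c(x,t) := 3\bigl( u_n^{-2} - x^{-2}\bigr)\).
Both the upper bound $v_n \le C_1$ and the lower bound $v_n \ge 0$ will follow from the MP once we know that $c\le 0$, i.e., that $u_n(x,t) \ge x$ throughout the evolution.

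The critical preliminary step is thus to show $u_n\ge x$. The key geometric observation is that the Simons cone $u\equiv x$ is itself a stationary solution of \eqref{eq-u-pde}: substituting $u_x=1$, $u_{xx}=0$ gives $0 + 3/x - 3/x = 0$. At the initial time $s_n$, the data $u_{0n}$ constructed in \eqref{eq-u0n-defined} lies strictly above $x$: in the inner region $u_{0n}(x) = s_n^{k/3}W_{K_2}(xs_n^{-k/3})$, and $W_{K_2}(z)>z$ for all $z\ge 0$ (a consequence of the strict convexity of $W$, $W(0)>0$, and the asymptotics \eqref{eqn-Was}, which together imply $W'<1$ and $(W-z)'<0$ with $W-z\to 0^+$ at infinity); on the rest of the axis $u_{0n}$ is a convex combination with the lower barrier $U^-_{\delta_n}(\cdot, s_n)$, which is a strictly positive perturbation of the cone in each of the intermediate and outer regions by the construction in Section~\ref{sec-barriers} (in particular Lemma~\ref{prop-Nf-estimate}, which guarantees $f_{\delta_n}^- > 0$). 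The comparison principle applied against $u\equiv x$ then gives $u_n(x,t)\ge x$ for every $t\in[s_n,\bar t_n)$, and hence $c\le 0$ on the whole domain.

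With $c\le 0$ in hand, a standard MP argument closes both inequalities simultaneously. A positive interior maximum of $v_n - C_1$ at $(x_*,t_*)$ with $t_*>s_n$ would force $(v_n)_t\ge 0$, $(v_n)_x = 0$, $(v_n)_{xx}\le 0$, whereupon the PDE reduces to $0 \le c(x_*,t_*)\,v_n(x_*,t_*)$, which contradicts $c<0$ and $v_n>C_1>0$. The same argument applied to $-v_n$ rules out a negative interior minimum. On the parabolic boundary we have $0\le v_n(\cdot,s_n)\le C_1$ by Lemma~\ref{lemma-uniform-C1}(i), and $v_n(0,t)=(u_n)_x(0,t)=0$ from smooth radial symmetry at the axis.

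The main obstacle is that the spatial domain $[0,\infty)$ is unbounded, so the MP does not apply without a growth control at $x\to\infty$. I would resolve this by passing to the non-radial, nonsingular 4D reformulation of Section~\ref{sec-short-time-existence}: writing $r(x,t)=u_n(|x|,t)$ on $\R^4$, the $x_1$-derivative $r_{x_1}$ satisfies a genuinely parabolic linear equation with coefficients that are uniformly bounded once one knows that $r$ is bounded below on bounded sets (which follows from $u_n\ge x$ together with the positivity $u_n(0,t) \ge s_n^{k/3}W_{K_2}(0) + Dt^{k-1} > 0$ inherited from the lower barrier). The outer barrier $|u_n - u_0|\le Mt$ from Lemma~\ref{lemma-outer}, combined with the hypothesis $0\le u_0'\le C_0$ from \eqref{eq-init-data2}, supplies the required uniform bound $|v_n|\le C$ at spatial infinity, legitimizing the MP.
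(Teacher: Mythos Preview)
Your strategy coincides with the paper's: differentiate \eqref{eq-u-pde}, note that the zero--order coefficient in the resulting linear equation for $\eta=(u_n)_x$ has a favorable sign once $u_n\ge x$, and apply the maximum principle. The execution differs in two places, and the first is a genuine gap.

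Your assertion that $U^-_{\delta_n}(\cdot,s_n)$ lies strictly above the cone on the \emph{entire} axis is not supported by the hypotheses. It is correct in the inner and intermediate regions (Lemma~\ref{prop-Nf-estimate} gives $f^-_{\delta_n}>0$) and for small $x$ in the outer region, but for $x\ge1$ the outer lower barrier is $u_0(x)-Ms_n$, and \eqref{eq-init-data1}--\eqref{eq-init-data2} do not force $u_0(x)\ge x$ once $x$ is large: the only global constraint is $0\le u_0'\le C_0$, so $u_0$ may even become constant. Hence the Simons--cone comparison is unavailable, $c\le0$ can fail for large $x$, and your interior--maximum argument does not rule out a maximum there. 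The paper avoids this by proving only a localized statement, Lemma~\ref{lemma-barrier-above-x}: $U^-_{\delta_n}(x,t)\ge x$ for $x\in[0,\alpha]$, read off the barriers region by region, which is all one needs since for $x\ge\alpha$ the lower barrier keeps $u_n$ bounded away from zero and the coefficient is merely bounded.

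For the non--compact domain the paper does not pass to the $\R^4$ formulation. It stays with the radial equation and introduces an explicit weight $\Omega(x,t)=x^{-1}+\kappa e^t x^2$; for small enough $\kappa$ one verifies $\bigl(\partial_t-\cM_n+Q_n\bigr)\Omega>0$, and since $\Omega\to+\infty$ at both $x\to0$ and $x\to\infty$, the functions $\eta\pm\epsilon\Omega$ attain their extrema at $t=s_n$ for every $\epsilon>0$. Letting $\epsilon\to0$ yields $0\le\eta\le C_1$ with the same constant as at the initial time. This single comparison function handles the axis singularity, the spatial infinity, and the sharp constant in one stroke; your route through the four--dimensional lift plus interior gradient estimates at infinity is workable in principle but considerably heavier, and does not directly deliver the constant $C_1$.
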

In order to prove this Lemma we will apply the maximum principle to the evolution
equation of \((u_n)_x\).  For this we first need the following observation.

\begin{lemma}
\label{lemma-barrier-above-x}
Let \(M\) be the same constant as in Lemma \ref{lemma-outer}.  There is an
\(\alpha>0\) such that for all sufficiently large \(n\), so that
\(U_{\delta_{n}}^-(x,t) \ge x\) for all \(x\in[0,\alpha]\) and all
\(t\in(0, t_n)\).
\end{lemma}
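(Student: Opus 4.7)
The plan is to exploit the fact that $U_{\delta_n}^-(x,t)$ is by definition the maximum of three explicit functions --- the outer, intermediate, and inner lower barriers --- so it suffices to show that each of the three bounds $x$ from below in its own natural region, and that those three regions together cover $[0,\alpha]\times(0,t_n)$ for an appropriately chosen $\alpha>0$.

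For the outer piece $u^-(x,t) = u_0(x) - Mt \min\{1, x^{2k-4}\}$, the expansion \eqref{eq-init-data1} with $K_0 > 0$ supplies an $\alpha_1 \in (0, 1]$ on which $u_0(x) \ge x + \tfrac{K_0}{2} x^{2(k-1)}$. In the outer region $x \ge M\sqrt{t}$ we have $Mt \le x^2/M$, so the subtracted term is at most $x^{2(k-1)}/M$, and $u^-(x,t) \ge x$ follows provided $M \ge 2/K_0$, which is already ensured by the choice of $M$ in Section~\ref{subsec-barriers-outer}. For the intermediate piece, Lemma~\ref{prop-Nf-estimate} shows $f_{\delta_n}^-(y,\tau) \ge 0$ throughout $R_* e^{\gamma\tau} \le y \le e^{-\tau/2}$, so $v_{\delta_n}^-(y,\tau) \ge y$ there and consequently $\sqrt{t}\, v_{\delta_n}^-(xt^{-1/2}, \log t) \ge x$ in the intermediate region. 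For the inner piece I would combine Lemma~\ref{lem-WK-starshaped} with the asymptotics \eqref{eqn-Was}: the identity $(W(z)/z)' = -(W-zW')/z^2 < 0$ together with $W(z)/z \to 1$ as $z \to \infty$ forces $W(z) \ge z$ for all $z \ge 0$; rescaling gives $W_{K_2^-}(z) \ge z$, hence $w_{\delta_n}^-(z,\tau) = W_{K_2^-}(z) + D e^{2\gamma\tau} \ge z$, and therefore $t^{k/3} w_{\delta_n}^-(xt^{-k/3}, \log t) \ge x$.

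It remains to verify the covering. For any $(x,t) \in [0,\alpha]\times(0,t_n)$ with $\alpha := \alpha_1$ and $n$ large enough that $t_n < M^{-2}$ and $Z_{\delta_n} \ge R_*$, I would split into three cases: $x \ge M\sqrt{t}$ (outer); $R_* t^{k/3} \le x < M\sqrt{t}$ (intermediate, since in $(x,t)$ coordinates the intermediate region is precisely $R_* t^{k/3} \le x \le 1$); and $x < R_* t^{k/3}$ (inner, using $Z_{\delta_n} \ge R_*$). The first two cases use the pointwise lower bounds just established, and the third is covered because $Z_{\delta_n} = \tfrac 43 \delta_n^{-1/(p-2)} \to \infty$ guarantees $R_* \le Z_{\delta_n}$ for large $n$. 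No step presents a serious obstacle; the only bookkeeping point to watch is that $\alpha$ must be fixed before $n_0$, which is fine because $\alpha$ depends only on $u_0$ and on the $n$-independent constants $M$ and $K_0$.
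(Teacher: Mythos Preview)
Your proof is correct and follows essentially the same three-region approach as the paper: use the expansion of $u_0$ together with $Mt\le x^2/M$ in the outer region, invoke $f_{\delta_n}^-\ge 0$ from Lemma~\ref{prop-Nf-estimate} in the intermediate region, and use $W_\kappa(z)>z$ (which you derive from Lemma~\ref{lem-WK-starshaped}) in the inner region. One small correction: the condition $M\ge 2/K_0$ is not automatically ensured by the choice in \S\ref{subsec-barriers-outer}; the paper imposes it here as an additional (harmless) enlargement of $M$, and you should do the same rather than claim it is already in force.
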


\begin{proof}
In the part of the outer region where \(M\sqrt t \leq x\leq 1\) we have
\(t\leq M^{-2}x^2\), so that
\begin{align*}
  U^-_{\delta_{n}}(x, t) 
  &= u_0(x) -Mt x^{2(k-2)} \\
  &= x + (K_1+o(1))x^{2(k-1)} - Mt x^{2(k-2)} &(x\to0)\\
  &\geq x +\bigl(K_1 - M^{-1} + o(1)\bigr) x^{2(k-1)} &(x\to0).
\end{align*}
If we choose \(M>2/K_1\) then there is an \(\alpha>0\) such that
\(K_1-M^{-1}+o(1)>0\), and hence so that \(U^-_{\delta_{n}}(x, t) > x\).

In the intermediate region the lower barrier is given by
\(t^{1/2} \, v_{\delta_n}^-(t^{-1/2}x, \log t)\), where in the rescaled variables
\((y,\tau)\) we have \(v_{\delta_n}^-(y,\tau) = y + f^-_{\delta_n}(y, \tau)\).
Lemma~\ref{prop-Nf-estimate} tells us that \(f^-_{\delta_n}(y, \tau)\geq 0\), so in
the intermediate region we have \(v^-_{\delta_n}(y, \tau)\geq y\) and hence
\(U^-_{\delta_n}(x, t)\geq x\).

Finally, in the inner region we have
\[
  U_{\delta_{n}}^-(x,t) = t^{k/3} \, w_{n}^-(t^{-k/3}x, \log t)  
\]
and, according to the definition in Lemma~\ref{lemma-inner-barrier2},
\[
  w_{n}^-(z,\tau) = W_{K_2^-}(z) + D\, e^{2\gamma\tau} > W_{K_2^-}(z) > z,
\]
because \(W_\kappa(z)>z\) for all \(z\geq 0\).  This implies
\(U^-_{\delta_n}(x, t)\geq x\) in the inner region as well.
\end{proof}

\begin{proof}[Proof of Lemma \ref{lemma-der-bound}]
If \(u_n\) is one of the approximating solutions of \eqref{eq-u-pde}, then by
differentiating in \(x\) we find that \(\eta := (u_n)_x\) satisfies
\begin{equation}\label{eqn-ux100}
  \eta_t=\cM_n[\eta] - Q_n(x, t)\eta
\end{equation}
where
\[
  \cM_n[\eta] :=\frac{\eta_{xx}}{1+(u_n)_x^2} 
  + \frac 3x \eta_{x}, \text{ and }
  Q_n(x, t) := \frac{2(u_n)_xx^2}{ \bigl(1+(u_n)_x^2\bigr)^2} 
  - \frac 3{u_n^2} + \frac 3{x^2}.
\]
Lemma~\ref{lemma-barrier-above-x} says that
\(u_n(x, t)\geq U^-_{\delta_n}(x, t)\geq x\), so \(Q_n(x, t)\geq 0\).

If the domain of \(\eta\) were bounded we could directly apply the maximum
principle and conclude that \(\eta\) is bounded by its initial values.  Since the
domain is not bounded we consider \(\Omega(x, t) := x^{-1}+\kappa e^tx^2\) in the
domain \(x>0\), \(0\leq t\leq 1\).  (Without loss of generality we assume that
\(\bar t_n\leq 1\) for all \(n\).)  In this region \(\Omega\) satisfies
\begin{align*}
  \Omega_t - \cM_n[\Omega] + Q_n(x, t)\Omega
  &\geq \kappa e^tx^2 - \frac{2x^{-3}}{1+(u_n)_x^2} +3 x^{-3}
  - \frac{2\kappa e^t}{1+(u_n)_x^2} -6\kappa e^t\\
  &\geq \kappa e^tx^2 - 2x^{-3} +3 x^{-3}
  - {2\kappa e^t} -6\kappa e^t\\
  &\geq \kappa e^tx^2 + x^{-3}  - 8\kappa e^t\\
  &\geq \kappa \bigl(x^2-8e\bigr) + x^{-3}.
\end{align*}
If we choose \(\kappa>0\) sufficiently small then the left hand side is positive
for all \(x>0\) and \(t\in[0,1]\).

For any \(\epsilon>0\) we therefore have
\[
  \left(\frac{\pd}{\pd t}-\cM_n+Q_n\right)(\eta+\epsilon\Omega)>0
  \text{ in }(0,\infty)\times[s_n,\bar t_n).
  \]
  Furthermore \(\eta+\epsilon\Omega\to\infty\) as \(x\to\{0,\infty\}\), so the
  maximum principle implies that \(\eta+\epsilon\Omega\) attains its minimum at the
  initial time \(t=s_n\).  Since \(0\leq u_{n, x}(x, s_n)\leq C_1\) (by
  Lemma~\ref{lemma-uniform-C1}) we find that
  \(\eta(x, t)+\epsilon\Omega(x, t)\geq 0\) for all \(\epsilon>0\), which implies
  that \(u_{n, x}(x, t) = \eta(x, t)\geq 0\) for all \(x>0\) and
  \(t\in[s_n, \bar t_n)\).

  By considering \(\eta-\epsilon\Omega\) for arbitrary \(\epsilon>0\) we similarly
  conclude that \(\eta\) is bounded by its largest initial value,
  i.e.~\((u_n)_x(x, t)=\eta(x, t)\leq C_1\) for all \(x>0\) and \(t\in[s_n, \bar t_n)\).
  This finishes the proof of Lemma \ref{lemma-der-bound}.
\end{proof}

\begin{corollary}
  \label{cor-exist-un}
  Let \(u_n(x,t)\) be a solution to the initial value problem
  \eqref{eq-u-pde}-\eqref{eq-u-initcond} with initial data \(u_n(x,s_n)\) as above, and
  let \(n \geq n_0\) where \(n_0\) is sufficiently large so that all previous results
  hold.  Then, the solution \(u(x,t)\) exists for all \(t\in [s_n, t_{n_0})\) and
  satisfies \(U_{\delta_{n_0}}^-(x,t) \le u_n(x,t) \le U_{\delta_{n_0}}^+(x,t)\) and
  \(0\leq (u_n)_x\leq C_1\), for all \(x \ge 0\) and all \(t\in [s_n,t_{n_0})\), where \(C_1\) is
  as in Lemma~\ref{lemma-uniform-C1}.
\end{corollary}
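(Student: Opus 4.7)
The plan is to combine the short-time existence discussed in
\S\ref{sec-short-time-existence}, the comparison principle, the sandwich bound
from Corollary~\ref{cor-between}, and the $C^1$ estimate of
Lemma~\ref{lemma-der-bound} in a standard continuation argument.

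First, I would invoke the short-time existence result of
\S\ref{sec-short-time-existence} to obtain a solution $u_n(x,t)$ on some maximal
time interval $[s_n,\bar t_n)$, smooth for $t>s_n$ and $C^{1,\alpha}$ up to
$t=s_n$ (this is possible because the gluing construction produces
$u_n(\cdot,s_n)$ with uniform $C^1$ bounds and uniformly positive lower bound
$u_n(x,s_n)\ge c>0$ by Lemma~\ref{lem-WK2-sandwich} together with
Lemma~\ref{lemma-barrier-above-x}).  On this interval
Lemma~\ref{lemma-der-bound} yields $0\le(u_n)_x\le C_1$.

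Next, I would set up the comparison with the coarser barriers
$U_{\delta_{n_0}}^\pm$.  By Corollary~\ref{cor-between} we have
$U_{\delta_n}^-(x,s_n)\le u_n(x,s_n)\le U_{\delta_n}^+(x,s_n)$, and the nesting
\eqref{eqn-ordering} of Proposition~\ref{prop-barriers} then gives
\[
U_{\delta_{n_0}}^-(x,s_n)\le u_n(x,s_n)\le U_{\delta_{n_0}}^+(x,s_n)
\]
for all $x\ge 0$.  Since $U_{\delta_{n_0}}^\pm$ are weak super- and
sub-solutions of \eqref{eq-u-pde} on $(0,\infty)\times(0,t_{n_0}]$, the
comparison principle for \eqref{eq-u-pde} formulated in
\S\ref{sec-short-time-existence} (applied to the difference
$u_n-U_{\delta_{n_0}}^\pm$, using the boundary condition
$\lim_{x\to 0}(u_n)_x=0$ and the fact that both $u_n$ and the barriers remain
bounded away from $0$) delivers
\[
U_{\delta_{n_0}}^-(x,t)\le u_n(x,t)\le U_{\delta_{n_0}}^+(x,t),\qquad
s_n\le t<\min\{\bar t_n,t_{n_0}\}.
\]

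Finally, the continuation step.  The sandwich together with
Lemma~\ref{lemma-barrier-above-x} shows $u_n(x,t)$ has a uniform positive
lower bound on any compact subset of $[0,\infty)\times[s_n,t_{n_0})$; combined
with the uniform $C^1$ bound from Lemma~\ref{lemma-der-bound}, the equation
\eqref{eq-u-pde} (viewed in the partially symmetric formulation
\eqref{eq-a-pde}) becomes uniformly parabolic with uniformly bounded
coefficients.  Standard parabolic regularity
(\cite[\S VI.1]{LSU}) then yields uniform higher-derivative bounds on
$u_n(\cdot,t)$ up to any $t<\min\{\bar t_n,t_{n_0}\}$, which preclude
singularity formation and therefore allow the short-time existence result to
be iterated past any such time.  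Hence $\bar t_n\ge t_{n_0}$, and all stated
bounds hold on $[s_n,t_{n_0})$.

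The step I would expect to require the most care is the continuation itself:
one has to verify that the uniform lower bound on $u_n$ provided by
$U_{\delta_{n_0}}^-$ (via Lemma~\ref{lemma-barrier-above-x}, which ensures
$U_{\delta_{n_0}}^-(x,t)\ge x$ for small $x$ and is clearly bounded below for
large $x$) is strong enough to keep the singular term $3/u_n$ in
\eqref{eq-u-pde} uniformly controlled, so that the quasilinear parabolic
theory of \S\ref{sec-short-time-existence} genuinely applies on the full
interval $[s_n,t_{n_0})$.  Everything else is a routine packaging of the
barrier and comparison machinery already established in
Sections~\ref{sec-barriers}--\ref{sec-existence}.
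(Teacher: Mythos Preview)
Your proposal is correct and follows essentially the same route as the paper: short-time existence, the barrier sandwich via Corollary~\ref{cor-between} and the nesting \eqref{eqn-ordering}, the gradient bound from Lemma~\ref{lemma-der-bound}, and then a continuation argument using the quasilinear theory of \S\ref{sec-short-time-existence}. The paper's own proof is terser and emphasizes one ingredient you underplay slightly: rather than a lower bound on $u_n$ on compact sets, it uses that $u_n(x,t)-u_0(x)$ is uniformly bounded (because $U_{\delta_n}^\pm - u_0$ is), which is exactly what the $a$-formulation \eqref{eq-a-pde} needs to keep the problem uniformly parabolic on the unbounded domain; the continuation step you describe is in fact deferred in the paper to the short subsection immediately following the corollary.
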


\begin{proof}
We have shown that \((u_n)_x\) is uniformly bounded, and that
\(u_n\geq U^-_{\delta_n}\) has a positive lower bound, and that
\(u_n(x, t)-u_0(x)\) is uniformly bounded (because \(U^\pm_{\delta_n}-u_0\) is
bounded).  The discussion in Section~\ref{sec-short-time-existence} and \eqref{eq-in-between}  then show that
the solution \(u_n\) can be continued for as long as it is contained between two
barriers, i.e.~at least until \(t_{n_0}\), where \(n_0\) does not depend on \(n\).
\end{proof}

\subsection{Uniform lower bound for \(\bar t_n\)}
Each of the approximating solutions \(u_n\) exists at least until time \(\bar t_n\).
We now argue that if \(n_0\) is large enough, then \(\bar t_n > t_{n_0}\) for all
\(n\geq n_0\).

We have already verified for all \(x\geq 0\) and
\(t\in [s_n, \min \{\bar t_n, t_{n_0}\}]\) that the solution \(u_n(x, t)\) remains
between the barriers \(U^\pm_{\delta_{n_0}}(x, t)\) and that its derivative
\((u_n)_x(x,t)\) is uniformly bounded.  Standard estimates for quasilinear parabolic
equations applied to \eqref{eq-r-pde} or \eqref{eq-a-pde} then imply that higher
derivatives of \(u_n\) also are uniformly bounded.  If we had
\(\bar t_n \leq t_{n_0}\), then \(\lim_{t\nearrow \bar t_n}u(x, t)\) would exist, and
we could extend the solution to a larger time interval.  Therefore \(\bar t_n\) would
not be the maximal time of existence for the solution \(u_n\) after all.

\subsection{Proof of the main existence Theorem~\ref{thm-existence}}
We have constructed the sequence of solutions \(u_n\) and have established \textit{a priori}
bounds for its derivatives, which imply that there is a subsequence \(u_{n_j}\) that
converges locally uniformly to a function \(u:[0,\infty)\times(0, t_{n_0}] \to\R\).
The derivative bounds for the approximating solutions \(u_n\) imply that
\(u_{n}\), \( u_{n, x}\), \(u_{n, xx}\), and \(u_{n, t}\) also converge locally
uniformly, and that the limit \(u\) is a solution of \eqref{eq-u-pde}.

We now verify that \(u\) also satisfies the initial and boundary conditions
\eqref{eq-u-bcond}, \eqref{eq-u-initcond}, as well as the asymptotic
description~\eqref{eqn-inner-description} of the inner region.   

\subsubsection{The initial condition}
Let \(n_0\) be so large that all previous results in this section hold.  Then all
solutions \(u_{n_j}\) are caught between the barriers \(U^\pm_{n_0}\), so the limit
also lies between \(U^\pm_{n_0}\).  In the outer region, defined by
\(x\geq M\sqrt t\), the lower (upper) barriers are defined
in~\eqref{eq-outer-upper-barrier} to be the maximum (minimum) of
\(u^\pm(x, t)=u_0(x) \pm Mt\min\{1, x^{2k-4}\}\), and the barriers defined in the
intermediate region.  This implies that for \(x\geq M\sqrt t\) we have
\[
  u_0(x, t) - Mt\max\{1, x^{2k-4}\} \leq u(x, t) \leq  u_0(x, t) + Mt\max\{1, x^{2k-4}\}.
\]
Therefore \(\lim_{t\searrow 0}u(x, t) = u_0(x)\) uniformly for all \(x>0\).

\subsubsection{Boundary condition}
The solutions \(u_n(x, t)\) all satisfy \(u_{n, x}(0, t) = 0\).  They converge in
\(C^1\) to \(u(x, t)\), so we have \(u_x(0, t)=0\) for all \(t\in (0, t_{n_0}]\).

\subsubsection{Asymptotics in the inner region}
To finish the proof of the theorem, we will show that
\[
  \lim_{\tau \to -\infty} w(z,\tau) = W_{K_2}(z)
\]
uniformly on compact sets in \(z\).  This follows almost immediately from
\eqref{eqn-between} and the definition of our barriers \({\tilde u}_n^\pm(x,t)\) in the inner
region.  Using the definitions \( w_n^-(z,\tau) = W_{K^-_2(n)}(z) + D e^{\gamma \tau}\) and
\( w_n^-(z,\tau) = W_{K^+_2(n)}(z) \) from section \ref{subsec-inner}, \eqref{eqn-between}
implies \(w_n^-(z,\tau) \leq w(z,\tau) \leq w_n^+(z,\tau)\) and hence
\begin{equation}\label{eqn-wb}
  W_{K_2^-(n)}(z)+D e^{\gamma\tau_n} \leq w(z, \tau) \leq W_{K^+_2(n)}(z)
\end{equation} 
for all \(z \in [0, Z_{\delta_n}]\), and \( \tau \leq \tau_n:= \log t_n\).

Since \(Z_{\delta_n}:= \frac 43 \delta_n^{- 1/(p-2)} \to + \infty\) and
\(K^\pm_2(n) = (K_2^3\pm 2\delta_n)^{1/3}\to K_2\) as \(n \to +\infty\)
\eqref{eqn-wb} holds on \([0, Z]\times(-\infty,\tau_n)\) for any \(Z>0\), provided
\(n\) is sufficiently large.  The rescaled Alencar solution \(W_K(z)=KW(z/K)\)
depends continuously on \(K\), so after taking the limit \(n\to\infty\) in
\eqref{eqn-wb} we conclude that \(\lim_{\tau \to 0} w(z, \tau)= W_{K_2}(z)\),
uniformly on any bounded interval \(0\leq z\leq Z\), as claimed in
Theorem~\ref{thm-existence}.

\section{Uniform  \(L^\infty\) bound  on the mean curvature}
\label{sec-Hbounded}
\subsection{Bounding \(H\)}
In Theorem \ref{thm-existence} we showed the short time existence of an
\(O(4) \times O(4)\) symmetric MCF solution \(\mathcal{M}_t\) , \(0 < t \leq t_0\),
which is smooth for \(t >0\) and defined by a profile function
\(u: [0, +\infty) \times (0,t_0] \to \R\) which satisfies the initial value problem
\eqref{eq-u-pde}--\eqref{eq-u-initcond} for the given initial data \(u_0(x)\).  In
this section we will show that the mean curvature of \(\cM_t\) is uniformly bounded
on \( [0, +\infty) \times (0,t_0]\) despite the fact that the initial data \(u_0\) is
singular at the origin.  The life time of the solution is \(t_0=t_{n_0}\) for some
large enough \(n_0\).

\begin{theorem} \label{thm-Hbounded} Let \(\cM_t\), \(0 < t \leq t_0\), be the
\(O(4) \times O(4)\) symmetric MCF solution constructed in Theorem
\ref{thm-existence}.  Then
\begin{equation}\label{eqn-HLinfty}
  \sup_{0<t\leq t_0}\sup_{\cM_t} H <\infty.
\end{equation}
\end{theorem}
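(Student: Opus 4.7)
The plan is to reduce bounding $H$ to bounding the time derivative $u_t$, and then to estimate $u_t$ separately in each of the three regions using the barrier analysis of Section \ref{sec-barriers}. From the parametrization $F = (x\Omega_1, u(x,t)\Omega_2)$ with $\Omega_i \in S^3$ and unit normal $\nu = (1+u_x^2)^{-1/2}(-u_x\Omega_1, \Omega_2)$, one has $|H| = |u_t|/\sqrt{1+u_x^2} \leq |u_t|$, so the uniform bound $u_x \leq C_1$ from Lemma \ref{lemma-der-bound} reduces the proof to showing $|u_t|$ is uniformly bounded on $[0,\infty)\times(0,t_0]$.

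Next I would estimate $u_t = \frac{u_{xx}}{1+u_x^2} + \frac{3u_x}{x} - \frac{3}{u}$ in each region. In the inner region, the rescaling $u(x,t) = t^{k/3} w(z,\tau)$ combined with equation \eqref{eq-w} gives
\[
u_t = t^{-k/3}\left[\frac{w_{zz}}{1+w_z^2} + \frac{3 w_z}{z} - \frac{3}{w}\right],
\]
and the bracket vanishes identically for the Alencar minimal surface $W_{K_2}$. The inner barriers from Section \ref{subsec-inner-inter} yield the $L^\infty$ bound $|w - W_{K_2}| = O(\delta_n + e^{2\gamma\tau})$ for $\tau \leq \tau_{\delta_n}$, which upgrades to a $C^2$ bound via interior parabolic regularity applied to equation \eqref{eq-w}; choosing $\delta_n \to 0$ appropriately as $t \to 0$ via the nesting of Proposition \ref{prop-barriers}, the bracket becomes $O(t^{2k/3-1})$, and so $u_t = O(t^{k/3-1})$, which is bounded (in fact $o(1)$) for $k \geq 4$. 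In the intermediate region, the rescaling $u = t^{1/2} v(y,\tau)$ with $v = y + f$ gives $u_t = t^{-1/2}[\tfrac{1}{2}(f - y f_y) + f_\tau]$, and the barrier bounds on $f$ together with a $C^2$ upgrade via parabolic regularity on \eqref{eq-v} yield $u_t = O(t^{k-2})$ for bounded $y$ and $O(t^{k/3-1})$ for $y \sim e^{\gamma\tau}$, both bounded for $k \geq 4$. In the outer region, matching with the intermediate barriers shows $u(x,t) = u_0(x) + O(t)$ uniformly in $C^2$ on compact $x$-intervals bounded away from $0$, so $u_t$ is close to the initial mean curvature expression $\cQ(u_0) = H_0(x)\sqrt{1+u_0'(x)^2}$, which is bounded because \eqref{eq-init-data1} implies $H_0(x) = O(x^{2k-4})$ as $x \to 0$.

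The main obstacle is upgrading the $L^\infty$ bounds coming from the barriers to pointwise $C^2$ estimates on the differences $u - u_0$, $f$, and $w - W_{K_2}$ needed for the calculations above, because the bracket in the inner formula is quadratically small in these differences only if we control not just the profile but also its first and second derivatives. This upgrade proceeds via interior parabolic Schauder estimates applied to the linearizations of \eqref{eq-u-pde}, \eqref{eq-v}, and \eqref{eq-w}; each linearized equation is uniformly parabolic on subdomains where the respective profile function is bounded below, and boundary regularity at the axis $x = 0$ (or $z = 0$) follows from the $O(4)\times O(4)$ symmetry, which forces $u_x(0,t) = 0$ and provides a smooth extension across the axis. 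A diagonal argument combining these regional estimates with the nested barriers of Proposition \ref{prop-barriers} then yields the desired uniform bound \eqref{eqn-HLinfty}.
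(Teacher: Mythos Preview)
Your direct estimate of $u_t$ in the inner region has a genuine gap.  The key claim---that the minimal surface operator
\[
\mathcal{A}[w] \;=\; \frac{w_{zz}}{1+w_z^2}+\frac{3}{z}w_z-\frac{3}{w}
\]
is $O(t^{2k/3-1})$---is not supported by the barrier construction.  In the inner region the barriers of \S\ref{subsec-inner-inter} trap $w$ between $W_{K_2^-}(z)+De^{2\gamma\tau}$ and $W_{K_2^+}(z)$; the upper barrier is \emph{time-independent}, so for fixed $\delta$ the best $L^\infty$ bound is $|w-W_{K_2}|\le |W_{K_2^+}-W_{K_2}|\approx C\delta$, with no $e^{2\gamma\tau}$ decay.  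The nesting does let you send $\delta\to0$, but only by restricting to $\tau\le\tau_\delta\to-\infty$, and the paper never establishes (nor is it true from the matching lemmas) that $\delta=O(e^{2\gamma\tau_\delta})$; tracking the errors in Lemma~\ref{lemma-match-middle-inner} shows the relationship is much weaker.  Even the second-order estimate Lemma~\ref{lem-C2-inter} only yields $|w_{zz}|\le C(1+z)^{-4}$ \emph{uniformly}, with no $\tau$-decay.  The net result is merely $|u_t|=t^{-k/3}|\mathcal{A}[w]|\le Ct^{-k/3}$, which blows up as $t\to0$.  A separate issue: your proposed $C^2$ upgrade via ``parabolic regularity applied to \eqref{eq-w}'' does not work directly, since \eqref{eq-w} carries the degenerate factor $e^{2\gamma\tau}$ in front of $w_\tau$.

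This is precisely why the paper abandons the direct route and instead runs a blow-up argument by contradiction (Lemma~\ref{prop-Lambdaj-bounded}), working with the \emph{approximating} solutions $u_n$ so as to have access to the clean initial condition $h_n(\cdot,s_n)=0$ on the Alencar piece.  One assumes the weighted sup $\Lambda_n=\sup(1+t^{-k/3}x)^m|h_n|$ diverges, rescales around the point of maximum, and passes to an ancient solution $\bar h$ of the linearized equation at either the Alencar surface or the cone.  A Liouville-type comparison with the explicit stationary solution $\Phi=W-zW'$ (or $\xi^{-2},\xi^{-3}$ in the cone case), exploiting $2<m<3$, forces $\bar h\equiv0$, contradicting $|\bar h(\bar\xi,0)|=(1+\bar\xi)^{-m}$.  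This indirect mechanism---using the equation satisfied by $h=u_t$ and Stolarski-type comparison barriers for the linearized flow, rather than estimating $\mathcal{A}[w]$ directly---is the missing ingredient in your proposal.
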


To prove this theorem we will first show, using a direct argument, that \(H(x, t)\)
is uniformly bounded in the outer region \(x\ge M\sqrt{t}, \, 0 < t \leq t_0\).
Then, using an argument by contradiction, that is strongly inspired
by Stolarski's approach in \cite{S}, we will show that \(H(x,t)\) is uniformly
bounded in the remaining region \(x \leq M\, \sqrt{t}, \, 0 < t \leq t_0\).

\subsection{Bounding \(H(x,t)\) in the outer region} 
Assume  without loss of generality that  \(t_0 \leq  M^{-2}\).  In this section
we will show that  \eqref{eqn-HLinfty} holds in the outer region \(\cO_M = \{
  (x,t) \,\, | \,\,\, x\ge M\sqrt{t}, \, 0 < t \leq  t_0 \}\), as stated next. 

\begin{lemma}\label{lem-Houter}
There exists a uniform constant \(C > 0\)   so that 
\begin{equation}\label{eqn-HLinfty-outer}
  \sup_{(x,t) \in \cO_M } H(x,t) \leq C
\end{equation}
for all \(t \in (0, t_0]\), provided \(t_0 < M^{-2}\). 

\end{lemma}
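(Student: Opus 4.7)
The plan is to bound $|u_t|$ in $\cO_M$, since $H=u_t/\sqrt{1+u_x^2}$ and so $|H|\leq|u_t|$. The apparent obstruction is that the two terms $3u_x/x$ and $-3/u$ in \eqref{eq-u-pde} are individually unbounded as $x\searrow 0$; the key is that in the outer region the solution $u$ stays close to the identity profile $u_0(x)=x+K_0 x^{2(k-1)}+\cdots$, so these singular contributions cancel to leading order.

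For $(x_0,t_0)\in\cO_M$ with $x_0\geq 1$, the lower barrier gives $u\geq c>0$, Lemma~\ref{lemma-der-bound} gives $u_x\leq C_1$, and \eqref{eq-u-pde} is uniformly parabolic with smooth coefficients on a parabolic neighborhood, so interior parabolic regularity directly yields $|u_{xx}|,|u_t|\leq C$. For $(x_0,t_0)\in\cO_M$ with $x_0<1$, I would exploit the exact scale invariance of \eqref{eq-u-pde} and rescale by $\tilde u(y,s):=x_0^{-1}u(x_0(1+y),t_0+x_0^2 s)$, which solves the same equation with $3/(1+y)$ in place of $3/x$. On the fixed cylinder $|y|\leq \tfrac12$, $s\in[-t_0/x_0^2,0]\subset[-M^{-2},0]$ the rescaled equation is uniformly parabolic (its singular coefficients are smooth and bounded, using the lower barrier to keep $\tilde u$ away from zero), and the rescaled initial data $\tilde u_0(y)=u_0(x_0(1+y))/x_0$ is a $C^{2,\alpha}$ perturbation of the affine profile $1+y$ of size $O(x_0^{2k-3})$ by \eqref{eq-init-data2}--\eqref{eq-init-data3}. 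Applying Schauder-type estimates to the increment $\tilde u-(1+y)$ therefore gives, after unscaling,
\[
  |u_x(x_0,t_0)-1|\leq C x_0^{2k-3}, \qquad |u_{xx}(x_0,t_0)|\leq C x_0^{2k-4}.
\]

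To conclude, I would rewrite the critical combination as
\[
  \frac{3u_x}{x}-\frac{3}{u}=\frac{3(u u_x-x)}{xu},
\]
and observe that the outer barrier \eqref{eq-outer-upper-barrier} plus the derivative estimate above force $u-x=O(x^{2k-2})$ and hence $uu_x-x=O(x^{2k-2})$, making the combined term $O(x^{2k-4})$; likewise $u_{xx}/(1+u_x^2)^{3/2}=O(x^{2k-4})$. Since $k\geq 4$, both are uniformly bounded on $x\leq 1$, which yields the claimed bound on $|u_t|$ and hence on $H$ throughout $\cO_M$.

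The main obstacle is the rescaled Schauder estimate near the parabolic corner $s=-t_0/x_0^2$, $|y|=\tfrac12$. This is not deep but needs care: one must verify that the linearisation of the rescaled PDE around $\tilde u=1+y$ is a uniformly parabolic linear operator with smooth coefficients on the fixed cylinder, which follows once the lower bound $\tilde u\geq c$ and the Lipschitz bound $\tilde u_y\leq C_1$ are in place, both of which transfer from $u$ via the scaling. After that, the argument reduces to a standard application of quasilinear parabolic theory on a fixed domain.
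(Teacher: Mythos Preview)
Your proposal is correct and follows essentially the same route as the paper. Both arguments fix a point $(x_1,t_1)\in\cO_M$, rescale by the spatial factor $x_1$ to obtain a function on a unit-size cylinder that satisfies the same PDE, use the outer barriers to show the rescaled function is within $O(x_1^{2k-3})$ of the linear profile $\xi$, and then apply parabolic regularity to the difference. The only cosmetic distinction is that the paper works with $F=(U-\xi)/x_1^{2k-3}$ and reads off $|F_s(1,0)|\leq C$ directly (which unscales to $|u_t|\leq Cx_1^{2k-4}$), whereas you extract the bounds $|u_x-1|\leq Cx_0^{2k-3}$ and $|u_{xx}|\leq Cx_0^{2k-4}$ separately and then plug them into the right-hand side of \eqref{eq-u-pde}; the two are equivalent. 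Your flagging of the ``parabolic corner'' issue when the rescaled time interval is short is exactly what the paper handles by invoking the $C^2$ bounds \eqref{eq-init-data2} on $u_0$ at the rescaled initial time; your remark that the rescaled initial data is a $C^{2,\alpha}$ perturbation of $1+y$ of size $O(x_0^{2k-3})$ is the correct observation, and you should note that this is needed for the $x_0\geq 1$ case as well, since $t_0/x_0^2$ can be arbitrarily small there too.
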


\begin{proof}
We fix a point \((x_1, t_1) \in \cO_M\).  We   first deal with the case when \(x_1\in(0,1)\).
Consider the function
\[
  U(\xi, s) = x_1^{-1} u(x_1\xi, t_1 + x_1^2 s).
\]
This function satisfies
\begin{equation}
  \label{eq-U-par}
  U_s = \frac{U_{\xi\xi}}{1+U_\xi^2} + \frac{3}{\xi}U_\xi - \frac{3}{U}
\end{equation}
in the region 
\[
  \mathcal Q = \Big \{(\xi, s) \colon  \tfrac 12 < \xi < \tfrac 32, \,
  -\frac{t_1}{x_1^2} < s\leq 0\Big \}.
\]

By  \eqref{eqn-between} the  solution  \(u\) lies between our upper and lower barriers constructed in Proposition \ref{prop-barriers}.  This
implies that for all \((x,t)\in \mathcal{O}_M\), with \(x\in (0,1)\),
\[
  |u(x,t ) - u_0(x)|\leq Mt\, x^{2k-4}
\]
and hence, for \(\xi\in(\frac 12, \frac 32)\) and \(-t_1x_1^{-2} < s \leq 0\), 
\[
  \left|U(\xi, s) - x_1^{-1} u_0(x_1\xi)\right|
  \leq
  M \, (t_1+x_1^2s) \, x_1^{2k-5}\xi^{2k-4}
  \leq
  CM \, t_1 \, x_1^{2k-5}.
\]
In the outer region we also have  \(x_1^2 \geq t_1\), so
\[
  \left|U(\xi, s) - x_1^{-1} u_0(x_1\xi)\right|
  \leq
  CM x_1^{2k-3}.
\]
The initial profile \(u_0\) satisfies \(x\leq u_0(x) \leq x+C\, x^{2k-2}\) for
\(0<x<2\).  Rescaling leads to
\[
  \left| x_1^{-1} u_0(x_1\xi) - \xi \right|
  \leq C \, x_1^{2k-3}.
\]
The last two inequalities together imply that
\begin{equation}
  \label{eq-U-unif-C0}
  \left| U(\xi, s) - \xi \right| \leq C \, x_1^{2k-3},
\end{equation}
holds on \(\mathcal{Q}\). 
Therefore the function 
\[
  F(\xi, s) \stackrel{\rm def}= \frac{U(\xi, s) - \xi}{x_1^{2k-3}}
\]
which satisfies equation 
\begin{equation}
  \label{eq-outer-F}
  F_s = \frac{F_{\xi\xi}}{1+U_\xi^2} + \frac{3}{\xi} F_\xi + \frac{3}{\xi \, U(\xi, s)} F
\end{equation}
is bounded on \(\mathcal{Q}\) by \(|F(\xi, s)|\leq C\) for some constant \(C\) that does not depend on \((x_1,t_1)\).

\begin{claim}\label{claim-UUU}
  \(U\) and \(1+U_\xi^2\) are H\"older continuous on 
  \[
    \mathcal{Q}' = \left\{(\xi, s) \colon \tfrac 23 < \xi < \tfrac 43, \,
    -\frac{t_1}{2x_1^2} < s\leq 0\right\}
  \]
  uniformly in \((x_1,t_1)\).
\end{claim}

\begin{proof}
By \eqref{eq-U-unif-C0} we have that \(\|U\|_{C^0(\mathcal{Q})} \le C\),
for a uniform constant \(C\), independent of \((x_1,t_1)\), where \(x_1\in (0,1)\).  Furthermore, in \(\mathcal{Q}\) we also have
\begin{equation}
\label{eqn-unx100}
  |U_{\xi}(\xi,s)| = |u_x(x_1\xi, t_1+x_1^2 s)| \le C,
\end{equation}
where \(C\) is a uniform constant, independent of \((x_1,t_1)\).  This follows by Lemma \ref{lemma-der-bound} and the fact that \(u_n(x,t)\) smoothly converges as \(n\to\infty\) to \(u(x,t)\), for all \(x > 0\) and \(t\in (0,t_1]\).  Since \(U(\xi,s)\) satisfies a uniformly parabolic equation \eqref{eq-U-par}, standard regularity theory applied to \eqref{eq-U-par} implies that there exists a uniform constant \(C\), independent of \((x_1,t_1)\) so that \(|U_{\xi\xi}(\xi,s)| \le C\) in \(\mathcal{Q}'\).  All these imply \(U\) and \(1+U_{\xi}^2\) are uniformly H\"older continuous functions on \(\mathcal{Q}'\) as claimed.
\end{proof}

Interior parabolic regularity for \eqref{eq-outer-F} then implies that \(F\), \(F_\xi\), 
and \(F_{\xi\xi}\) are uniformly bounded (and even H\"older) on \(\mathcal{Q}'\).  
We conclude that for some constant \(C\) that does not depend on \((x_1,t_1)\) we have
\[
  |F_s(1,0)| \leq C.
\]
In terms of the original solution \(u(x,t)\) this then implies 
\[
  |u_t(x_1, t_1)| \leq C \, x_1^{2k-4} \leq C,
\]
where we have used \(k\geq 4\) and \(x_1\leq 1\) in the last step.  We conclude that \(|H(x_1, t_1)|\leq |u_t(x_1,t_1)|\) is uniformly bounded 
for all  \((x_1,t_1) \in \cO_M\) with  \(x_1 \leq 1\).  

\smallskip 
Let us now deal with the case where \(x_1 \geq 1\), in which case   \(t_1/x_1^2 \leq t_1\) is small (since \(t_1 \leq t_0\) and we have assumed
that \(t_0 < M^{-2}\) and \(M\) is large).  
The interior regularity estimates then provide a bound for
\(|F_{\xi\xi}(1,0)|\) in terms of \(\sup_{\mathcal{Q}}|F|\) and
\(\sup_{\frac 12<\xi<\frac 32} |F_{\xi\xi}(\xi,-t_1/x_1^2)|\).  We have
\[
  F_{\xi\xi} (\xi,-t_1/x_1^2) = x_1^{-(2k-3)} U_{\xi\xi}(\xi,-t_1/x_1^2)
  = x_1^{-(2k-5)} u_0''(x_1\xi).
\]
By assumption we have \(|u_0''(x)|\lesssim x^{2k-4}\), and hence
\[
  \sup_{1 < \xi < 3/2}\left|F_{\xi\xi} (\xi,-t_1/x_1^2)\right| 
  \lesssim x_1 \lesssim 1.
\]
In our case where \(t_1/x_1^2\) is small, this implies that \(F_{\xi\xi}(1,0)\) and
hence \(F_s(1,0)\) are bounded uniformly.  It follows that
\(|H(x_1, t_1)|\leq |u_t(x_1,t_1)|\) is also uniformly bounded if
\((x_1,t_1) \in \cO_M\) with \(x_1 \geq 1\).

\smallskip

Combining the two cases \(x_1 \in (0,1)\) and \(x_1 \geq 1\) leads to  \eqref{eqn-HLinfty-outer}, finishing the proof of the proposition. 
\end{proof}

\subsection{Second order derivative bounds for \(x\leq  M \sqrt{t}\)} Before we
bound \(H(x,t)\) in the intermediate and inner regions, we will establish the
following  crucial for our purposes weighted  \(C^2\)  bound for our
approximating sequence of solutions \(u_n(x,t)\) which were  defined in Section
\ref{sec-existence}. 

\smallskip 

\begin{lemma}\label{lem-C2-inter} There exists \(n_0\) sufficiently large and a
constant \(C\) independent of \(n\) so that for all \(n \geq n_0\) the bound
\begin{equation} \label{eqn-unxx}
  |(u_n)_{xx}(x,t)| \leq C\, t^{ - k/3}
  \bigl(1+t^{-k/3} x \bigr)^{-4}
\end{equation}
holds for all \(0 \leq x \leq M \sqrt{t}\), \, \(t \in [s_n, t_0]\).
\end{lemma}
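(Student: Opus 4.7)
The plan is to establish \eqref{eqn-unxx} pointwise, by a parabolic rescaling at each $(x_0, t_0)$ adapted to the weight, treating separately the inner regime $x_0 \leq t_0^{k/3}$ and the intermediate regime $x_0 \geq t_0^{k/3}$. At each point I would use Proposition \ref{prop-barriers} to supply $C^0$ control on the rescaled function, and then apply interior parabolic Schauder estimates (using Lemma \ref{lemma-der-bound} and Lemma \ref{lemma-barrier-above-x} to control the coefficients of the rescaled PDE).

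Fix $(x_0, t_0)$ with $t_0 \in [s_n, t_{n_0}]$ and $x_0 \in [0, M\sqrt{t_0}]$, set $\lambda = \max\{x_0, t_0^{k/3}\}$ and $\xi_0 = x_0/\lambda \in [0, 1]$, and define the rescaled profile
\[
U(\xi, s) = \lambda^{-1}\, u_n\bigl(\lambda\xi,\, t_0 + \lambda^2 s\bigr).
\]
Then $U$ solves the same PDE \eqref{eq-u-pde}, has $|U_\xi| \leq C_1$, and satisfies $U(\xi,s) \geq \xi$. A direct computation shows the target bound \eqref{eqn-unxx} is equivalent to $|U_{\xi\xi}(\xi_0, 0)| \leq C\,(t_0^{k/3}/\lambda)^3\,(1+\xi_0)^{-4}$, which reduces to $|U_{\xi\xi}(\xi_0, 0)| \leq C$ in the inner case and to $|U_{\xi\xi}(1, 0)| \leq C\,(t_0^{k/3}/x_0)^3$ in the intermediate case.

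In the inner case ($\lambda = t_0^{k/3}$), Proposition \ref{prop-barriers} sandwiches $U(\xi, 0)$ between the inner barriers $w^{\pm}_{\delta_n}(\xi, \tau_0)$, which converge to the rescaled Alencar profile $W_{K_2}(\xi)$ as $n \to \infty$. Consequently $V := U - W_{K_2}$ is uniformly small in $C^0$ on a unit parabolic cylinder. Subtracting the minimal surface equation \eqref{eqn-Alencar} for $W_{K_2}$ from the PDE for $U$, the function $V$ satisfies a uniformly parabolic quasilinear equation with smooth bounded coefficients and forcing of order $t_0^{2\gamma}$; interior parabolic Schauder (with the symmetry reflection handling the origin $\xi_0 = 0$) yields $|V_{\xi\xi}(\xi_0, 0)| \leq C$. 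Combined with $|W_{K_2}''(\xi_0)| \leq C(1+\xi_0)^{-4}$ from the asymptotics \eqref{eqn-Was}, this gives the desired bound in the inner case.

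In the intermediate case ($\lambda = x_0$), the intermediate barriers $v^{\pm}_{\delta_n} = y + f^{\pm}_{\delta_n}$ together with $\varphi_k(y) = y^{-2} + O(1)$ for small $y$ force $|u_n(x,t) - x| = O(t^k/x^2)$ on the cylinder $\{|x - x_0| \leq x_0/2,\, t_0 - x_0^2 \leq t \leq t_0\}$, which rescales to $|V(\xi, s)| \leq C(t_0^{k/3}/x_0)^3$ on the rescaled unit cylinder, where $V := U - \xi$. Since $\xi \in [1/2, 3/2]$ keeps the singular coefficients $1/\xi$ and $1/U$ bounded, $V$ satisfies a uniformly parabolic quasilinear equation whose linear leading part has bounded coefficients and whose nonlinearity is quadratic in $V, V_\xi$; standard Schauder yields $|V_{\xi\xi}(1,0)| \leq C(t_0^{k/3}/x_0)^3$. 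The main technical obstacle I anticipate is the case when the backward parabolic cylinder extends past the initial time $t = s_n$: there I would replace interior Schauder with initial-boundary Schauder, using the weighted $C^2$ bound from Lemma \ref{lemma-uniform-C1}(ii) as initial data, since it has exactly the weight needed to propagate to $t_0$.
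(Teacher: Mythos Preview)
Your proposal is correct and follows essentially the same route as the paper: rescale at each point by \(\lambda=\max\{x_0,t_0^{k/3}\}\), use the barriers from Proposition~\ref{prop-barriers} for \(C^0\) control of the rescaled function, bootstrap through quasilinear regularity (via Lemma~\ref{lemma-der-bound}) to get H\"older coefficients, and then apply Schauder; and you correctly flag that when the backward cylinder reaches \(t=s_n\) one must switch to initial--boundary Schauder using the weighted \(C^3\) bounds of Lemma~\ref{lemma-uniform-C1}(ii), exactly as the paper does in its Cases~1b and~2b. One small simplification the paper makes in the inner case: it does not subtract \(W_{K_2}\) at all---since the target there is only \(|U_{\xi\xi}|\leq C\), it suffices to note from \eqref{eqn-un666} that \(C^{-1}\leq U\leq C\) and then apply quasilinear interior estimates directly to \(U\); your subtraction works too, but the remark about ``forcing of order \(t_0^{2\gamma}\)'' is off (after your rescaling \(U\) satisfies \eqref{eq-u-pde} exactly and \(W_{K_2}\) satisfies the minimal surface equation exactly, so the equation for \(V\) is homogeneous), and \(V\) is only uniformly \emph{bounded} by the fixed \(\delta_{n_0}\)-barriers on \([s_n,t_{n_0}]\), not uniformly small---which is all you need.
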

\begin{proof}
The proof follows from scaling and standard regularity theory for linear and
quasilinear parabolic equations.  We repeatedly use the first order derivative
bound \(0\leq u_x(x, t)\leq C_1\) from Corollary~\ref{cor-exist-un}, as well
as the derivative bounds
\begin{equation}
\label{eq-un-higher-deriv-bound}
  |\partial^j u_n(x,s_n)| \leq C\, s_n^{ - (j-1) k/3} \bigl (1+s_n^{-k/3} x
  \bigr)^{-(j+2)}, \qquad j=2,3. 
\end{equation}
holding at the initial time \(s_n\), which were shown in Lemma
\ref{lemma-uniform-C1}.

Since our solutions \(u_n(x,t)\) scale differently in the intermediate and inner
regions we need to treat the cases
\(x \in [2R \, t^{k/3} , M t^{1/2} ] \) and
\(x \in [0, 2R \, t ^{k/3}]\) separately.  We will choose \(R\) in
the proof of Case 1 below to be a sufficiently large constant which is independent
of \(n\).  Then for this choice of \(R\) we will show that Case 2 holds.  In both
cases we will assume that \(n \geq n_0\) and \(s_n \leq t \leq t_0\), and \(n_0\)
will be chosen sufficiently large and \(t_0\) will be chosen to be sufficiently
small, uniformly in \(n\).  

\smallskip We start by fixing \(n \geq n_0\) and a point \((x_1, t_1)\) where
\(0 \leq x_1 \leq M \sqrt{t_1}\), \, \(t_1 \in [s_n, t_0]\).

{\bf Case 1 :} Assume
\(x_1\in [2R \, t_1^{k/3} , M t_1^{ 1/2} ] \), where \(R\) is a
sufficiently large constant.  Similarly to the proof of Lemma \ref{lem-Houter}, we
consider the rescaling
\[
  {\tilde u}_n(\xi, s) = x_1^{-1} u_n(x_1\xi, t_1 + x_1^2 s)
\]
which satisfies equation
\begin{equation}
  \label{eq-U-parn}
  ({\tilde u}_n)_s = \frac{({\tilde u}_n)_{\xi\xi}}{1+{\tilde u}_{n\xi}^2} + \frac{3}{\xi}({\tilde u}_n)_\xi - \frac{3}{{\tilde u}_n}
\end{equation}
in the region
\[
  \mathcal Q_n = \Big \{(\xi, s) \colon \tfrac 12 < \xi < \tfrac 32, \, -
  \frac{t_1-s_n}{x_1^2} < s\leq \frac{t_0-t_1}{x_1^2}\Big \}.
\]
We subdivide into the {\em two cases}
\({\displaystyle \frac{t_1-s_n}{x_1^2} > \frac 1{2M^2}}\) and
\({\displaystyle \frac{t_1-s_n}{x_1^2} \leq \frac 1{2M^2}}\).

{\bf Case 1a :} If \({\displaystyle \frac{t_1-s_n}{x_1^2} > \frac 1{2M^2}}\) then
the parabolic square
\[
  \cQ_M'= \Big \{(\xi, s) \colon \tfrac 12 < \xi < \tfrac 32, \, - \frac 1{2M^2}<
  s\leq 0\Big \}
\]
has fixed size (independent of \((x_1,t_1)\) and \(n\)) and satisfies
\(\cQ_M' \subset \mathcal Q_n\).  We will restrict to \(\cQ_M'\).

For any \((\xi,s) \in \cQ_M'\) we have
\(x:=x_1 \xi \in [R \, t_1^{k/3} , 2M t_1^{1/2} ] \) and
\(t:= t_1 + x_1^2 s \in [t_1/2, t_1]\).  In particular we have
\(y:=x t^{-\frac 12} \in [R \, t_1^\gamma, 2\sqrt{2} M]\), i.e., \((x,t)\) lies in
the intermediate region, a fact that will be used momentarily.  To obtain the desired
bound on \(u_{xx}(x_1,t_1)\), we will bound \(U_{\xi\xi}(1,0)\) by applying interior
parabolic regularity estimates to the function \({\tilde u}_n(\xi,s) -\xi\) defined in
\(\cQ_M'\).  We first estimate the \(L^\infty\) norm of this function on \(\cQ_M' \)
by bounding \(|u_n(x,t)-x|\), for \(x= x_1 \xi, t=t_1 + x_1^2 s\) where
\((\xi,s) \in \cQ_M'\).

By \eqref{eqn-between} the solution \(u\) lies between our upper and lower barriers
constructed in Proposition \ref{prop-barriers}.
Hence,
\begin{equation}\label{eqn-444} |u_n(x,t ) - x| \leq \max \big \{ |U_{\delta_n}^+(x,t
  ) - x|, |U_{\delta_n}^-(x,t ) - x| \big \}
\end{equation}
for all \(n \geq n_0 \) sufficiently large.  Using the definition of our barriers
\({\tilde u}_n^\pm(x,t ) \) (see \eqref{eqn-ubu} and \eqref{eqn-lbu}) the difference
\( |{\tilde u}_n^\pm(x,t ) - x|\) for \(n \geq n_0\) is bounded by
\(t^{\frac 12} |f^{\pm}_{\delta_{n_0}}( x t^{-\frac 12}, t)| \)
(\(f^{\pm}_{\delta_{n_0}}\) was defined in \eqref{eq-barriers-med}).  The latter can
be bounded by \(2K_1 t^{k-1} \varphi_k ( x \, t^{-\frac 12}) \), provided that
\(t_0\) is sufficiently small.  This follows from the definition of
\(f^{\pm}_{\delta_{n_0}}\) and our estimates in section \ref{subsec-inter}, after
expressing these estimates in the \((x,t)\) variables using \eqref{eq-par-scaling}.
Since \(\varphi_k (y) \leq C_k \, \big ( y^{2k-2} + y^{-2} \big )\) with
\(y:=x t^{-\frac 12} \in [R t_1^\gamma, 2\sqrt{2} M]\) and \(t \in [t_1/2, t_1]\), we
get
\begin{equation}\label{eqn-446}
  \max \big \{ |U_{\delta_n}^+(x,t ) - x|, |U_{\delta_n}^-(x,t ) - x| \big \}
  \leq C   \, t^{k-1} \,  (x\,  t^{-\frac 12})^{-2} 
  \le C \,  x_1^{-2}  t^{k}
\end{equation}
for some constant \(C\) (depending only on \(k, M\)) which is uniform in
\((x_1,t_1)\) and \(n\).  Combining \eqref{eqn-444} and \eqref{eqn-446} while using
\(t=t_1+x_1^2s \leq t_1\) yields
\begin{equation}\label{eqn-445}
  \left | {\tilde u}_n(\xi,s) - \xi \right |
  \leq C \, x_1^{-3} t^k_1 \,\qquad \mbox{in}\,\, \cQ_M'.
\end{equation}
It follows that the function
\[
  F_n(\xi, s) \stackrel{\rm def}= x_1^3\, t^{-k}_1\, \big ( {\tilde u}_n(\xi, s) - \xi \big
  )
\]
which satisfies equation
\begin{equation}
  \label{eqn-Fn}
  (F_n)_s = \frac{(F_n)_{\xi\xi}}{1+{\tilde u}_{n\xi}^2} + \frac{3}{\xi} (F_n)_\xi + \frac{3}{\xi \, {\tilde u}_n(\xi, s)} F_n 
\end{equation}
is uniformly bounded in the parabolic cube \(\cQ'_M\), namely
\(\|F_n\|_{C^0(\cQ_M')} \le C\), where the constant \(C\) is independent of
\((x_1,t_1)\) and \(n\).

\begin{claim}\label{claim-UUUn}
  \({\tilde u}_n\) and \(1+{\tilde u}_{n\xi}^2\) are H\"older continuous on the parabolic cube
  \[
    \cQ_M''= \Big \{(\xi, s) \colon \tfrac 14 < \xi < \tfrac 54, \, - \frac 1{4M^2}<
    s\leq 0\Big \} \subset \cQ_M'
  \]
  uniformly in \((x_1,t_1)\) and \(n\).  Furthermore \(1/4 \leq {\tilde u}_n(\xi,s) \leq 2\),
  for all \((\xi,s) \in \cQ''_M\).

  \begin{proof}
  Since \(x_1 \geq R \, t_1^{k/3}\), by \eqref{eqn-445} we have that
  \( \left | {\tilde u}_n(\xi,s) - \xi \right | \leq C R^{-3} \), and since the constant \(C\)
  doesn't depend on \(R\), we may choose \(R\) sufficiently large so that
  \(1/4 \leq {\tilde u}_n(\xi,s) \leq 2\) for all \((\xi,s) \in \cQ_M'\).  In addition
  \eqref{eqn-unx100} implies that
  \(|{\tilde u}_{n\xi}(\xi,s)| = |(u_n)_x(x_1\xi, t_1+x_1^2 s)| \le C\) in \(\cQ_M'\), where
  in both cases \(C\) is a uniform constant, independent of \((x_1,t_1)\) and \(n\).
  It follows that \({\tilde u}_n(\xi,s)\) satisfies in \(\cQ_M'\) a uniformly parabolic
  equation \eqref{eq-U-parn} with bounded coefficients, and therefore standard
  interior (in space-time) regularity theory applied to the quasilinear equation
  \eqref{eq-U-parn} implies the existence of a uniform constant \(C\), independent of
  \((x_1,t_1)\) and \(n\), so that \(|{\tilde u}_{n\xi\xi}(\xi,s)| \le C\) in
  \(\cQ_M'' \subset \cQ_M'\).  All the above give us that \({\tilde u}_n\) and
  \(1+{\tilde u}_{n\xi}^2\) are uniformly H\"older continuous functions on \(\cQ_M''\) as
  claimed.
  \end{proof}

\end{claim}

Claim \ref{claim-UUUn} implies that equation \eqref{eqn-Fn} is uniformly parabolic in
\(\cQ_M''\) and its coefficients are H\"older continuous (uniformly in \((x_1,t_1)\)
and \(n\)).  Interior (in space-time) Schauder theory applied to \eqref{eqn-Fn} in
\(\cQ_M''\) bounds \(|(F_n)_{\xi\xi}(1,0)|\) in terms of \(\|F_n\|_{C^0(\cQ_M'')}\),
concluding that \(|(F_n)_{\xi\xi}(1,0)| \leq C\), for a uniform constant \(C\).
Equivalently, \(|({\tilde u}_n)_{\xi\xi}(1,0)| \le C \, x_1^{-3} t_1^k\) and converting back
to the original solution gives the bound
\(|(u_n)_{xx}(x_1, t_1)| \leq C \, x_1^{-4}\, t_1^k.\) In the considered region we
have \(x_1 t^{-\frac k3} \geq R\), thus
\(t^k_1 \, x_1^{-4} = t^{-\frac k3}_1\big ( t_1^{-\frac k3}x_1 \big )^{-4} \leq C\,
t^{-\frac k3}_1 \big (1+ x_1 t_1^{-\frac k3} \big )^{-4} \) (where \(C\) depends on
\(R\)).  We conclude that the desired bound \eqref{eqn-unxx} holds when
\(x_1\in [2R \, t_1^{k/3} , M t_1^{\frac 12} ] \) and
\(\frac{t_1-s_n}{x_1^2} > \frac 1{2M^2}\).

\smallskip

{\bf Case 1b :} If \, \({\displaystyle \frac{t_1-s_n}{x_1^2} \leq \frac 1{2M^2}}\),
then \(x_1 \leq M t_1^{\frac 12}\) implies that
\({\displaystyle t_1-s_n \leq \frac {x_1^2}{2M^2} \leq \frac {t_1}2}\), and hence in
this case \(t_1 \in [s_n, 2s_n]\).  This in turn gives \(x_1 \leq M \sqrt{2s_n}\),
implying in particular that
\({\displaystyle \frac{t_0-t_1}{x_1^2} \geq \frac{t_0-2s_n}{2M^2s_n} \geq 1}\),
provided that \(n\geq n_0\) with \(n_0\) sufficiently large.  Hence the cube
\[
  \cQ_n' = \Big \{(\xi, s) \colon \tfrac 12 < \xi < \tfrac 32, \, -
  \frac{t_1-s_n}{x_1^2} < s\leq - \frac{t_1-s_n}{x_1^2} + 1 \Big \}
\]
has fixed size and satisfies \(\cQ_n' \subset \cQ_n\).  The difference between this
and the previous case is that the cube \(\cQ_n'\) starts at
\(s=- \frac{t_1-s_n}{x_1^2}\) corresponding to initial time \(t=s_n\) for the
solution \(u_n(x,t)\).  This means that our estimates need to include bounds on the
initial data \(u_n(x,s_n)\).

As in the previous case, we will begin by bounding \(|{\tilde u}_n(\xi,s) - \xi|\) in
\(\cQ_n'\).  For any \((\xi,s) \in \cQ_n'\) we have
\(x:=x_1 \xi \in [R \, t_1^{k/3} , 2M \sqrt{t_1} ] \subset [R \,
s_n^{k/3} , 2M \sqrt{2s_n}]\) (using \(t_1 \in [s_n, 2s_n]\)) and
\(t:= t_1 + x_1^2 s \in [s_n,  (2M^2 +2) \, s_n]\) (using
\(x_1 \leq M t_1^{\frac 12}\)).  Hence,
\(y:=x t^{-\frac 12} \in [ \frac R{\sqrt{2} M} s_n^\gamma , 2 \sqrt{2} M] \) which
shows that the point \((x,t)\) belongs to the intermediate region.  Now similar
arguments as in Case 1a imply that bounds \eqref{eqn-444} and \eqref{eqn-446} hold
(with \(s_n\) instead of \(t_1\)).  We conclude that
\(|u_n(x,t) -x| \leq C \, x_1^{-2} \, s_n^{3\gamma + \frac 32}\) holds at
\(x=x_1 \xi\), \(t:=t_1+s \xi_1^2\), for any \((\xi,s) \in \cQ'_n\), where \(C\) is
independent of \((x_1,t_1)\) and \(n\).  In terms of \({\tilde u}_n(\xi,s)\) we obtain
\begin{equation}\label{eqn-447}
  \left | {\tilde u}_n(\xi,s) - \xi \right |  \leq C \,  x_1^{-3} s_n^{3\gamma + \frac 32} \leq  C \,  x_1^{-3} t_1^{k} \qquad   \mbox{in}\,\, \cQ_n'. 
\end{equation}

\smallskip

\begin{claim}\label{claim-UUUn2}
  \({\tilde u}_n\) and \(1+{\tilde u}_{n\xi}^2\) are H\"older continuous on the parabolic cube
  \[
    \cQ_n'' := \Big \{(\xi, s) \colon \tfrac 34 < \xi < \tfrac 54, \, -
    \frac{t_1-s_n}{x_1^2} < s\leq - \frac{t_1-s_n}{x_1^2} + 1 \Big \} \subset \cQ_n'
  \]
  uniformly in \((x_1,t_1)\) and \(n\).  Furthermore, \(1/4 \leq {\tilde u}_n(\xi,s) \leq 2\)
  for all \((\xi,s) \in \cQ_n'\).

  \begin{proof}
  Similarly to Claim \ref{claim-UUUn}, the bounds \eqref{eqn-447} and
  \eqref{eqn-unx100} imply that on \(\cQ'_n\) we have \(1/4 \leq {\tilde u}_n \leq 2\) and
  \(|{\tilde u}_{n\xi}| \le C\).  In addition, for \(j=2,3\) we have
  \begin{equation}\label{eqn-Un222}
    \sup_{ \frac 12 \leq \xi \leq \frac 32}
    \Big  | \partial^j_\xi \, {\tilde u}_n \bigl(\xi, - \frac{t_1-s_n}{x_1^2}\bigr)\Big  |
    \leq x_1^{j-1}, \qquad
    \sup_{ \frac {x_1}2 \leq x \leq \frac{3x_1}2}
    \big| \partial_x^j \, u_n(x, s_n) \big| \leq C  \,  x_1^{-3} s_n^k \leq C
  \end{equation}
  where we used \eqref{eq-un-higher-deriv-bound} and our assumption
  \(x_1 \geq 2R t_1^{\frac k3}\) combined with \(t_1\in [s_n, 2s_n]\).  In all the
  above bounds \(C\) is a uniform constant, independent of \((x_1,t_1)\) and \(n\).
  Since \({\tilde u}_n(\xi,s)\) satisfies a uniformly parabolic equation \eqref{eq-U-parn} in
  \(\cQ_n'\), standard interior (in space) theory for quasilinear equations applied
  to \eqref{eq-U-parn} yields the \(C^2\) bound
  \(\|{\tilde u}_{n\xi\xi}\|_{C^2(\cQ_n'')} \le C\) (and even a \(C^{2,1}\) bound), where
  \(C\) is a constant that depends only on \(\|{\tilde u}_n\|_{C^0(\cQ_n')}\) and
  \(\|{\tilde u}_n(\cdot, - \frac{t_1-s_n}{x_1^2}) \|_{C^3([\frac \xi2, \frac {3\xi}2])}\),
  therefore \(C\) is uniform in \((x_1,t_1)\) and \(n\), since these bounds are as
  well.  We conclude that \({\tilde u}_n\) and \(1+{\tilde u}_{n\xi}^2\) are uniformly H\"older
  continuous functions on \(\cQ_n''\), finishing the proof of the claim. \end{proof}

\end{claim}

Consider the function
\(F_n(\xi, s) := x_1^{3}t^{-k}_1 \, \big ({\tilde u}_n(\xi, s) - \xi \big )\) on \(\cQ_n''\)
which satisfies equation \eqref{eqn-Fn} and the uniform bound
\(\|F_n\|_{C^0(\mathcal{Q}''_n)} \le C\), where \(C\) is independent of \((x_1,t_1)\)
and \(n\).  Claim \ref{claim-UUUn2} implies that \(F_n(\xi,s)\) satisfies a uniformly
parabolic equation \eqref{eqn-Fn} on \(\cQ_n''\) with coefficients which are
uniformly H\"older continuous.  Therefore, standard interior (in space) Schauder
estimates applied to \eqref{eq-U-parn} on the cube \(\cQ_n''\) imply that
\(|(F_n)_{\xi\xi}(1,0)|\) can be bounded in terms of
\(\|F_n\|_{C^0(\mathcal{Q}''_n)}\) and
\(\| F_n(\cdot, - \frac{t_1-s_n}{x_1^2})\|_{C^{2,1}([\frac 34,\frac 54]) }\).  We
have just seen that \(\|F_n\|_{C^0(\mathcal{Q}''_n)} \leq C\).  We will next show the
bound
\(\| F_n(\cdot, - \frac{t_1-s_n}{x_1^2})\|_{C^{3}([\frac 34,\frac 54]) } \leq C\).
First, \eqref{eqn-Un222} and the definition of \(F_n\), give
\(|\partial_\xi^j F_n(\xi, - \frac{t_1-s_n}{x_1^2}) = x_1^3\, t^{-k}_1\,
|\partial_\xi^j {\tilde u}_n(\xi, s)| \leq C t^{-k}_1 s_n^{k} \leq C\), for \(j=2,3\) and all
\(\xi \in [\frac 34,\frac 54]\).  The bound for \(j=1\) follows similarly from
\(0 \leq (u_n)_x (x, s_n) \leq C\).  In all the above bounds \(C\) is independent of
\((x_1, t_1)\) and \(n\).

\smallskip

We conclude that \(|(F_n)_{\xi\xi}(1,0) | \leq C\), where \(C\) is independent of
\((x_1,t_1)\) and \(n\), and similarly to the Case 1a, the desired bound
\eqref{eqn-unxx} holds for \(x_1\in [2R \, t_1^{k/3} , M t_1^{1/2} ] \) and
\(\frac{t_1-s_n}{x_1^2} \leq \frac 1{2M^2}\).  This completes the argument in Case
1b.

\smallskip

{\bf Case 2 :} Suppose next that \(x_1\in [0, R \, t_1^{k/3}]\), that
is \((x_1,t_1)\) belongs to the tip region.  Here \(R\) is a large fixed constant,
chosen as in Case 1.  In this case we will not scale around \(x_1\), but around the
origin and we will show 
\begin{equation}\label{eqn-unxx2}
  \sup_{x \in [0, R\, t_1^{k/3} ]} |(u_n)_{xx} (x,t_1) | \leq C\, t^{-k/3}_1,
  \qquad 0 < t_1 \leq t_0
\end{equation}
for a uniform constant \(C\) independent of \(n\) and \(t_1\) (\(C\) may depend on
\(R\)).  This estimate is equivalent to \eqref{eqn-unxx} because in the considered
region one has \(x_1 t_1^{-\frac k3} \leq R\).

To this end we set \(\alpha:= \frac k3 \geq 1\) for simplicity, and introduce the
rescaled function
\begin{equation}\label{eqn-Un333}
  \bU_n(\xi, s) = t_1^{-\alpha}\, u_n( t_1^{\alpha}\, \xi, t_1 + t_1^{2\alpha} s)
\end{equation}
which satisfies equation \eqref{eq-U-parn} in the region
\[
  \mathcal Q_n = \Big \{(\xi, s) \colon 0 \leq \xi \leq 2R, \, -
  \frac{t_1-s_n}{t_1^{2\alpha}} < s\leq \frac{t_0-t_1}{t_1^{2\alpha}}\Big \}.
\]
Bound \eqref{eqn-unxx2} is equivalent to
\begin{equation}\label{eqn-U300}
  \sup_{\xi \in [0,R]} |({\tilde u}_n)_{\xi\xi}(\xi,0) | \leq C
\end{equation}
and will follow by applying standard regularity theory to equation \eqref{eq-U-parn}
in an appropriate cube \(\cQ'_n \subset \cQ_n\).

First, one needs to bound \({\tilde u}_n\) on \(\cQ_n'\) from above and below away from zero.
To this end, observe that \eqref{eqn-UUU}, \eqref{eqn-ubu}--\eqref{eqn-lbu} and the
definition of the inner region barriers in section \ref{subsec-inner} give
\begin{equation}\label{eqn-un500}
  t^\alpha W_{K_2^-(n_0)}\left( x \,  t^{-\alpha}\right) + D \,  e^{2\gamma \log t} \leq u_n(x,t) \leq  t^\alpha W_{K_2^-(n_0)}\left( x  t^{-\alpha}\right)
\end{equation}
for all \(n \geq n_0\) sufficiently large and all \(x \in [0, Z\, t^\alpha]\) (for
any \(Z >0\)) and \(t \leq t_0\).  Here \(D >0\), thus we can drop the small term
\(D \, e^{2\gamma \log t}\).  The above estimate when expressed in terms of
\({\tilde u}_n(\xi,s)\) gives
\begin{equation}\label{eqn-un666}
  \vartheta_n(s) \,  W_{K_2^-(n_0)}\big( \frac{\xi}{\vartheta_n(s)} \big)   
  \le \bU_n(\xi, s) \le \vartheta(s) 
  \, W_{K_2^+(n_0)}\big(  \frac{\xi}{\vartheta_n(s)}  \big) 
\end{equation}
where \(\vartheta_n(s) := t^\alpha t_1^{-\alpha} =
\bigl(1+t_1^{2\alpha-1}s\bigr)^\alpha\).  Note that in order to obtain
\eqref{eqn-un666}  from \eqref{eqn-un500} we need  to have  \(\frac{ \xi}{
  \vartheta_n(s)}   \leq Z\),  for all \((\xi,s) \in \cQ_n'\),  for some \(Z>0\)
which is  independent of \((\xi,s) \in \cQ_n'\).  This will be checked below.
We   need to consider two cases,  
\((t_1-s_n)t_1^{-2\alpha} >1\) and 
\((t_1-s_n)t_1^{-2\alpha}  \leq  1\), and choose \(\cQ'_n\) appropriately. 

{\bf Case 2a :} If \((t_1-s_n)t_1^{-2\alpha} >1 \) then we
restrict to the parabolic cube of fixed size
\[
  \cQ' = \left\{(\xi, s) \colon 0 \leq \xi \leq 2R, \, -1 < s\leq 0\right\}
\]
(independent of \(t_1\) and \(n\)), which obviously satisfies
\(\cQ' \subset \mathcal Q_n\).  We will restrict to \(\cQ'\), where \(s \in (-1,0]\)
readily implies the bounds
\(\vartheta_n(s) \geq (1- t_1^{2\alpha-1})^\alpha \geq 1/2\) and
\( \vartheta_n(s) \leq 1\) and (for the former use \(t_1 \leq t_0\), where \(t_0 \)
can be chosen sufficiently small).

Using \(\xi \, \vartheta_n^{-1} \leq 4R\) and \(1/2 \leq \vartheta_n \leq 1\), we
readily conclude from \eqref{eqn-un666} that there exist a uniform in \(n\) and
\(t_1\) constant \(C >0\) (depending on \(\inf_{z \in [0,4R]} W_{K_2^-(n)}(z)\) and
\(\sup_{z \in [0,4R]} W_{K_2^+(n)}(z)\)) such that
\begin{equation}\label{eqn-Unab}
  0 < C^{-1}   \leq {\tilde u}_n(\xi, s) \leq C, \qquad \mbox{for all} \,\, (\xi,s) \in \cQ'.
\end{equation}
Furthermore, by \eqref{eqn-unx100} we have \(\| {\tilde u}_{n\xi} \|_{C^0(\cQ')} \leq C\),
where \(C\) is again independent of \(n\) and \(t_1\).  Standard interior (in
space-time) regularity theory applied to \eqref{eq-U-parn} implies that there exists
a uniform constant \(C\), independent of \(n\) and \(t_1\), so that
\(\sup_{\xi \in [0, R] } |({\tilde u}_n)_{\xi\xi}(\xi,0)| \le C\), that is \eqref{eqn-U300}
holds.  In terms of the original solution \(u_n(x,t)\) this implies the desired bound
\eqref{eqn-unxx2} in the case \((t_1-s_n)t_1^{-2\alpha} \),
with \(\alpha = \frac k3\).

\smallskip

{\bf Case 2b :} Finally, if \((t_1-s_n)t_1^{-2\alpha} \leq 1 \), then since
\(t_1 \leq t_0\) is small and \(\alpha \geq 1\), we have \(t_1 \leq s_n +
t_1^{2\alpha} \leq s_n + t_1/2\), that is \(t_1 \in [s_n, 2s_n]\). In this case
we restrict to the parabolic cube of fixed size
\[
  \cQ_n' = \Big \{(\xi, s) \colon 0 \leq \xi \leq 2R, \, -
  \frac{t_1-s_n}{t_1^{2\alpha}} < s\leq - \frac{t_1-s_n}{t_1^{2\alpha}} + 1\Big \}.
\]
which contains the point \((1,0)\) and satisfies \(\cQ_n' \subset \cQ_n\).  Since
\({\displaystyle 0 < \frac{t_1-s_n}{t_1^{2\alpha}} \leq 1 }\), for any
\((\xi,s) \in \cQ'_n\) we have \(s \in [-1,1]\), thus
\(\vartheta_n := (1+t_1^{2\alpha-1} s)^{\alpha}\) satisfies the bounds
\(1/2 \leq \vartheta_n(s) \leq 3/2\), for all \(t_1 \leq t_0\) with \(t_0\)
sufficiently small.

\begin{claim} The bounds \(0 < C^{-1} \leq {\tilde u}_n(\xi, s) \leq C\) and
  \(|({\tilde u}_n)_{\xi}(\xi,s)| \le C\) hold on \(\cQ'_n\).  Furthermore,
  \(\| {\tilde u}_n \big (\cdot, - \frac{t_1-s_n}{t_1^{2\alpha}} \big ) \big \|_{C^3([0, 2R])}
  \leq C\).  In all these bounds \(C\) is a uniform constant independent of \(n\) and
  \(t_1\).
\end{claim}
\begin{proof}
Since \(1/2 \leq \vartheta_n(s) \leq 3/2\), similarly to Case 2a we can apply
\eqref{eqn-un666} to obtain that \(0 < C^{-1} \leq {\tilde u}_n(\xi, s) \leq C\) holds in
\(\cQ_n'\).  Also, similarly to the previous cases, \(|({\tilde u}_n)_{\xi}(\xi,s)| \le C\)
in \(\cQ_n'\) follows from \eqref{eqn-unx100}.  For the third bound it is
sufficient to just estimate second and third order derivatives.  To this end we use
\eqref{eq-un-higher-deriv-bound} which implies that
\(|\partial_x^j u_n (x, s_n) | \leq C s_n^{- (j-1) \frac k3}\) for \(j=2,3\) and
for all \(x \in [0, 2R t_1^{\frac k3}]\) (recall that \(t_1\sim s_n\)).

In terms of \({\tilde u}_n\) we get
\(\big | \partial ^j_\xi {\tilde u}_n \big (\xi, - \frac{t_1-s_n}{t_1^{2\alpha}} \big ) \big
| \leq C\) for \(j=2,3\) and for all \(\xi \in [0, 2R]\).  The above bounds imply
that
\(\| {\tilde u}_n \big (\cdot, - \frac{t_1-s_n}{t_1^{2\alpha}} \big ) \big \|_{C^3([0, 2R])}
\leq C\).  In all the these bounds the constant \(C\) is uniform, independent of
\(n\) and \(t_1\).
\end{proof}

The previous claim and standard interior (in space) regularity theory applied to
\eqref{eq-U-parn} on the cube \(\cQ_n'\) implies that
\(\sup_{0 \leq \xi \leq R} \big | ({\tilde u}_n)_{\xi\xi}(\xi,0) |\) (even
\(\|{\tilde u}_n (\cdot, 0) \|_{C^{2,1}([0,R])}\)) can be bounded in terms of
\(\| {\tilde u}_n \|_{C^0(\cQ_n')}\) and
\(\| {\tilde u}_n \big (\cdot, - \frac{t_1-s_n}{t_1^{2\alpha}} \big ) \big \|_{C^3([0,
2R])}\), and thus both are bounded by a constant \(C\) which is uniform in \(t_1\)
and \(n\).  We conclude that \eqref{eqn-U300} holds, which expressed in terms of
\(u_n(x,t)\) gives that \eqref{eqn-unxx2} holds in the last case where
\((t_1-s_n)t_1^{-2\alpha} >1 \), with
\(\alpha= \frac k3\).

\smallskip

Combining Cases 1a-1b and Cases 2a-2b, concludes the proof that the desired bound
\eqref{eqn-unxx} holds for all \((x,t)\) satisfying \(0 \leq x \leq M \sqrt{t}\), \,
\(t \in [s_n, t_0]\) and all \(n \geq n_0\), provided \(n_0\) is sufficiently large
and \(t_0 >0\) is sufficiently small.
\end{proof}

\subsection{Bounding \texorpdfstring{\(H\)}{H} in the intermediate and inner regions}
We will now show that \(H(x,t)\) is bounded in region \(x \leq M \sqrt{t}, \, 0 <  t \leq t_0\). 
Instead of showing that \(H\) is bounded, we will prove  that 
\[
  h(x, t) \stackrel{\rm def}= u_t = H\sqrt{1+u_x^2}
\]
is bounded.  Since \(u_x\) is uniformly bounded (Lemma \ref{lemma-der-bound}),
the bounds for \(h\) and \( H\) are equivalent. Arguments in this section have
been inspired by arguments from \cite{S}.

\smallskip 
The PDE for \(u\) implies that \(h=u_t\) satisfies
\[
  h_{t} = \frac{\partial}{\partial x}\left(\frac{h_{x}}{1+u_x^2}\right)
  + \frac 3x h_{x}+\frac 3{u^2}h. 
\]

\smallskip For \(n \geq n_0\), define \(h_n(x, t) := \partial_t u_n(x, t)\), where
\(u_n: [0,\infty) \times [s_n, t_0] \to \R\) is our approximating sequence of
solutions from the proof of Theorem \ref{thm-existence} in section
\ref{sec-existence}.  We choose a fixed \(m\in(2, 3)\) and set
\[
  \Lambda_n = \max \, \big \{ \bigl(1+t^{-\frac k3}x\bigr)^m  |h_n(x, t)| : 
  0\leq x \leq M \, \sqrt{t}, \, t \in [s_n, t_0]  \big \}.
\]
We claim the following holds. 
\begin{lemma}\label{prop-Lambdaj-bounded}
We have   \(\sup_{n}  \Lambda_n <\infty\).
\end{lemma}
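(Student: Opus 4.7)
The plan is to argue by contradiction, in the spirit of Stolarski. Assume there is a subsequence (still denoted $\{n\}$) with $\Lambda_n\to\infty$, and let $(x_n, t_n)$ realise the maximum in the definition of $\Lambda_n$. The first step is to bound $(1+t^{-k/3}x)^m|h_n|$ on the parabolic boundary of $\{0\le x\le M\sqrt t,\ s_n\le t\le t_0\}$, uniformly in $n$. On the spatial boundary $x=M\sqrt t$, Lemma~\ref{lem-Houter} and Lemma~\ref{lemma-der-bound} already give $|h_n|\le C$; a refinement of the rescaling in the proof of Lemma~\ref{lem-Houter}, using $|u_n(x,t)-u_0(x)|\lesssim t\,x^{2k-4}$ (from \eqref{eq-outer-upper-barrier} and \eqref{eqn-between}), upgrades this to $|h_n(M\sqrt t,t)|\lesssim t^{k-2}$, and multiplying by $(1+Mt^{1/2-k/3})^m$ gives a uniform bound provided $m\in(2,3)$ is chosen close enough to $2$. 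At $t=s_n$, the initial data coincides with the Alencar minimal surface on $[0,\epsilon\sqrt{s_n}]$, so $h_n(\cdot,s_n)\equiv 0$ there (the right-hand side of \eqref{eq-u-pde} vanishes on an Alencar surface); on the transition and outer strips one evaluates $h_n(\cdot,s_n)$ via the PDE together with the weighted bounds \eqref{eq-un-higher-deriv-bound} of Lemma~\ref{lemma-uniform-C1}. Because $u_n\to u$ in $C^\infty_{\rm loc}$ on $\{t\ge t_*>0\}$, the maximiser $(x_n,t_n)$ must lie parabolically interior to the region with $t_n\searrow 0$.

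I would then split into two cases. In \textbf{Case A} (inner region), $x_n\le R\,t_n^{k/3}$ with $R$ as in Lemma~\ref{lem-C2-inter}; the weight is $O(1)$, so $|h_n(x_n,t_n)|\to\infty$. Passing to inner coordinates $w_n(z,\tau)=t^{-k/3}u_n(t^{k/3}z,t)$, $\tau=\log t$, one computes
\[
h_n(x,t)=t^{k/3-1}\Bigl[\tfrac{k}{3}\bigl(w_n-z(w_n)_z\bigr)+(w_n)_\tau\Bigr].
\]
The inner barriers \eqref{eqn-wb} force $w_n\to W_{K_2}$ as $\tau\to-\infty$ uniformly on $0\le z\le R+1$; Lemma~\ref{lem-C2-inter} rescaled to the inner variables gives a uniform $C^2$ bound for $w_n$ on the same set, and standard interior parabolic regularity applied to~\eqref{eq-w} upgrades this to control of $(w_n)_\tau$ with $(w_n)_\tau\to 0$. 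Since $k>3$, $t_n^{k/3-1}\to 0$, and hence $|h_n(x_n,t_n)|\to 0$, a contradiction.

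\textbf{Case B} (intermediate region), $R\,t_n^{k/3}\le x_n\le M\sqrt{t_n}$, is where the main difficulty lies. In intermediate coordinates $v_n=y+f_n$ one has
\[
h_n(x,t)=t^{-1/2}\Bigl[\tfrac12 f_n-\tfrac12 y(f_n)_y+(f_n)_\tau\Bigr],
\]
and Proposition~\ref{lem-intermed-barriers} together with~\eqref{eqn-between} give $|f_n|\le Ce^{3\gamma\tau}\varphi_k(y)$; Lemma~\ref{lem-C2-inter} rescaled yields matching bounds on $y(f_n)_y$ and $y^2(f_n)_{yy}$, and \eqref{eq-f} then controls $(f_n)_\tau$ in terms of these. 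Using $\varphi_k(y)\lesssim y^{-2}+y^{2k-2}$ and $y\in[Rt^\gamma,M]$, a termwise calculation gives the weighted bound $(1+t^{-k/3}x)^m|h_n|\lesssim t^{(k-2)-m(k/3-1/2)}$, which stays bounded as $t\searrow 0$ provided $m<6(k-2)/(2k-3)$ (equal to $12/5$ when $k=4$); for such $m$, Case B is also excluded.

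\textbf{The main obstacle} is that this direct estimate is sharp, so to reach the full range $m\in(2,3)$ one has to replace the calculation by a second, local blow-up: rescale around $(y_n,\tau_n)$ in the intermediate region, extract a weighted bounded limit $F$ of $f_n$, which solves the linear equation $F_\tau=\cL F$ on $(0,\infty)\times\R$, and invoke a Liouville theorem based on the spectral decomposition of $\cL$ recorded in Appendix~\ref{sec-appendix-linear} (the eigenfunctions $\varphi_k$) to conclude $F\equiv 0$, contradicting the nontrivial blow-up. This Liouville step for $\cL$, and the verification that the boundary contributions in the rescaling produce no obstruction to passing to the limit, is the technical heart of the proof.
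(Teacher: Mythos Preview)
Your direct estimates in both cases only recover the starting bound \eqref{eq-hi-1+tx-bound} and therefore cannot contradict $\Lambda_n\to\infty$. In Case~A, the claim that ``standard interior parabolic regularity applied to~\eqref{eq-w}'' bounds $(w_n)_\tau$ is unjustified: equation~\eqref{eq-w} carries the degenerate factor $e^{2\gamma\tau}$ on $w_\tau$, so regularity in the $(z,\tau)$ variables gives no control of $(w_n)_\tau$. Equivalently, passing to the genuinely parabolic rescaling $\bar u_n(\xi,s)=\alpha_n^{-1}u_n(\alpha_n\xi,T_n+\alpha_n^2 s)$ with $\alpha_n=T_n^{k/3}$ one has $(\bar u_n)_s=\alpha_n\,(u_n)_t$, so a bound on $(\bar u_n)_s$ yields only $h_n=O(T_n^{-k/3})$, which is exactly \eqref{eq-hi-1+tx-bound} at $z=0$. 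In Case~B, the exponent $(k-2)-m(k/3-1/2)$ you compute is correct for $y$ of order one, but your region extends down to $y\sim Re^{\gamma\tau}$; there the dominant term $\tfrac12(f_n)_{yy}\sim e^{3\gamma\tau}y^{-4}$ in $\cL f_n$ forces $(1+z)^m|h_n|\lesssim t^{-k/3}$ for every $m\in(2,4)$, again matching but not improving \eqref{eq-hi-1+tx-bound}. So nothing is gained over the starting point, and the ``second blow-up'' you describe as a refinement for large $m$ is in fact the whole proof.

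The paper proceeds by blow-up and Liouville from the outset. After locating $(a_n,T_n)$ and deducing $a_n\ll\sqrt{T_n}$ from \eqref{eq-hi-1+tx-bound} (this is where $m$ close to~$2$ enters), it sets $\alpha_n=T_n^{k/3}$ if $a_n\lesssim T_n^{k/3}$ (Case~1) and $\alpha_n=a_n$ otherwise (Case~2), and defines $\bar h_n=\Lambda_n^{-1}h_n(\alpha_n\,\cdot\,,T_n+\alpha_n^2\,\cdot\,)$. A separate argument, using that $h_n(\cdot,s_n)\equiv 0$ on the Alencar portion of the initial data, shows the available backward time $S_n=(T_n-s_n)/\alpha_n^2\to\infty$. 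One then extracts an ancient solution $\bar h$ of the linearization at the Alencar surface (Case~1) or of $\cM_0[\eta]=\tfrac12\eta_{\xi\xi}+\tfrac3\xi\eta_\xi+\tfrac3{\xi^2}\eta$ (Case~2), with decay $(1+\xi)^{-m}$ respectively $\xi^{-m}$. The Liouville steps use explicit stationary barriers, namely $\Phi=W-\xi W'\sim\xi^{-2}$ in Case~1 and the pair $\xi^{-2}$, $\xi^{-3}$ in Case~2; the condition $2<m<3$ is exactly what traps $\bar h$ between multiples of these and forces $\bar h\equiv 0$. The operator $\cL$ you propose does not arise under either rescaling: since $\alpha_n\ll\sqrt{T_n}$, the self-similar drift $\tfrac y2\partial_y-\tfrac12$ in $\cL$ drops out in the limit.
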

This lemma implies that \(|h_n(x, t)|\) is uniformly bounded, and hence that
\(H_n=h_n/\sqrt{1+u_x^2}\) is also uniformly bounded.  Since the bound is uniform in
\(n\), by passing to the limit as \( n \to +\infty\) we will then obtain that the
mean curvature \(H(x,t)\) of our solution is bounded for
\(0 \leq x \leq M \sqrt{t}, \,\, 0 \leq t \leq t_0\).

\subsection{Choice of the blow-up sequences}
For the proof of Lemma \ref{prop-Lambdaj-bounded} we argue by contradiction and
assume that \(\sup_n \Lambda_n =\infty.\) Then we can pass to a subsequence so that
we may assume without loss of generality that
\begin{equation}\label{eqn-Lan}
  \lim_{n \to \infty} \Lambda_n = +\infty.
\end{equation}
Our goal in this section is \emph{to contradict \eqref{eqn-Lan}. }

The bound \eqref{eqn-unxx} for \(u_n\) implies the same bound for \(h_n\), namely, we
have
\begin{equation}
  \label{eq-hi-1+tx-bound}
  |h_n(x, t)|\lesssim t^{-k/3}\left(1+t^{-k/3}x\right)^{-4}
  \qquad (x\leq M \sqrt{t}, \,\, t\in [s_n, t_0]).
\end{equation}
The quantity \((1+t^{-k/3}x)^m|h_n(x, t)|\) attains its maximum in the region
\(\{(x, t)\mid 0\leq x\leq M\sqrt t, s_n\leq t\leq t_0\}\), so we can choose
\(T_n\in[s_n, t_0]\) and \(a_n\in [0, M\sqrt{T_n}]\) such that
\begin{equation}
  \label{eq-choice-an}
  |h(a_n, T_n)| = \Lambda_n \left(1+T_n^{-k/3}a_n\right)^{-m}.
\end{equation}
The inequality \eqref{eq-hi-1+tx-bound} implies

\[
  T_n^{k/3}\left(1+T_n^{-k/3}a_n\right)^{4-m}\lesssim \Lambda_n ^{-1}
\]
and thus
\[
  \max\left\{ T_n^{k/3}, T_n^{(m-3)k/3}a_n^{4-m} \right\} \lesssim
  \Lambda_n^{-1}.
\]
Since \(\Lambda_n\to\infty\) we find that \(T_n\to0\), and also
\[
  a_n \ll T_n^{\frac{3-m}{4-m}\frac k3}.
\]
At this point we use our assumption that \(k>3\) and choose \(m\) so close to
\(m=2\) that the exponent of \(T_n\) satisfies \( {\frac{3-m}{4-m}\frac k3} >
\frac 12, \) which then implies
\begin{equation}
  \label{eq-an-ll-sqrt-Tn}
    a_n \ll T_n^{{\frac 12}}.
\end{equation}
To complete the proof we distinguish between two cases \(a_n\lesssim
T_n^{\frac k3}\) and \(T_n^{\frac k3} \ll a_n\ll T_n^{\frac12}\), depending on
where the maximum \(a_n\) is attained.

\subsection{Case 1: \(a_n\lesssim T_n^{\frac k3}\)}

We choose the scale \(\alpha_n = T_n^{\frac k3}\) and form the following blow-up
sequences:
\begin{align}
  \label{eq-un-bar-def}
  \bar u_n(\xi, s) &= \alpha_n^{-1} u_n\left(\xi\alpha_n, T_n+s\alpha_n^2\right) \\
  \label{eq-hn-bar-def}
  \bar h_n(\xi, s) &= \Lambda_n^{-1} h_n\left(\xi\alpha_n, T_n+s\alpha_n^2\right).  
\end{align}
These functions are defined for
\[
  \xi>0,\qquad -S_n \leq s\leq 0 \quad \text{ where }S_n = \frac{T_n - s_n}{\alpha_n^2}
\]
and they satisfy the equations 
\begin{align}
  \frac{\partial \bar u_n}{\partial s}
  &=\frac{\bar u_{n\xi\xi}}{1+\bar u_{n\xi}^2} 
  + \frac{3}{\xi}\bar u_{n\xi} - \frac{3}{\bar u_n}
  \label{eq-bar-ui}
  \\
  \frac{\partial \bar h_n}{\partial s}
  &=\frac{\partial}{\partial \xi}
  \left(\frac{\bar h_{n\xi}}{1+\bar u_{n\xi}^2}\right)
  + \frac 3\xi \bar h_{n\xi} + \frac{3}{\bar u_n^2}\bar h_n.
  \label{eq-bar-hi}
\end{align}
Use~\eqref{eq-un-bar-def} with \(\alpha_n = T_n^{k/3}\) and the definition of the
inner region rescaling \(w_n(z,\tau)\) of \(u_n(x,t)\), i.e.,
\[
  u_n(x, t) = t^{\frac k3} w_n\left(t^{-\frac k3}x, \log t\right),
\]
with \(t=T_n+T_n^{\frac {2k}3}s\) to express \(\bar u_n(\xi, s)\) in terms of
\(w_n(z,\tau)\).  We get
\[
  \bar u_n(\xi, s) = \vartheta_n(s) w_n\left(\frac{\xi}{\vartheta_n(s)}, \log t \right)
\]
where
\[
  \vartheta_n(s) := t^{\frac k3} \, T_n^{-\frac k3}
  =  \bigl(T_n +T_n^{\frac {2k}3}s\bigr)^{\frac k3} T_n^{-\frac k3}=
  \bigl(1+T_n^{\frac {2k}3-1}s\bigr)^{\frac k3}.
\]
Since \(T_n\to0\) we have \(\vartheta_n(s)\to1\) uniformly for bounded \(s\),
and thus
\[
  \log t= \log T_n + 
  \frac 3k \, \log\vartheta_n(s)   \to -\infty
\]
uniformly for bounded \(s\).  Similarly to the last statement of  Theorem
\ref{thm-existence} we claim the following.
\begin{claim}
  \label{claim-inner-convergence}
  \(\bar u_n(\xi, s)\to
  W_{K_2}(\xi) \) in \(C^\infty_{\rm loc}\).
\end{claim}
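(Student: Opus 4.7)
\emph{Plan.} My plan is to combine the inner-region barrier sandwich from Proposition~\ref{prop-barriers} with the weighted $C^2$ estimate of Lemma~\ref{lem-C2-inter} and standard interior parabolic regularity. The argument proceeds in two steps: first establish $C^0_{\rm loc}$ convergence $\bar u_n(\xi, s) \to W_{K_2}(\xi)$, then bootstrap to $C^\infty_{\rm loc}$.

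For $C^0$ convergence on a fixed rectangle $[0, R] \times [-S, 0]$, I would first observe that $\vartheta_n(s) = (1 + T_n^{2k/3 - 1} s)^{k/3} \to 1$ and $\log t = \log(T_n + T_n^{2k/3} s) \to -\infty$ uniformly on $[-S, 0]$, because $T_n \to 0$ and $k \ge 4$. For any fixed large $n_0$, the nested ordering \eqref{eqn-UUU} together with the definition of the inner-region barriers in Section~\ref{subsec-inner} gives, for every $n \ge n_0$ and every $(z, \tau)$ with $0 \le z \le Z_{\delta_{n_0}}$ and $\tau \le \tau_{n_0}$, the sandwich
\[
  W_{K_2^-(n_0)}(z) + D \, e^{2\gamma \tau} \le w_n(z, \tau) \le W_{K_2^+(n_0)}(z).
\]
Substituting $z = \xi/\vartheta_n(s)$ and $\tau = \log t$ and sending $n \to \infty$ with $n_0$ held fixed, then sending $n_0 \to \infty$, the continuous dependence $K_2^\pm(n_0) \to K_2$ of the rescaled Alencar solution $W_\kappa$ on its parameter yields $\bar u_n \to W_{K_2}$ uniformly on the chosen rectangle.

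To upgrade this $C^0$ convergence to $C^\infty_{\rm loc}$, I would apply interior parabolic regularity to \eqref{eq-bar-ui}. The sandwich just established keeps $\bar u_n$ uniformly bounded above and bounded away from zero on compact sets (since $W_{K_2}(\xi) \ge W_{K_2}(0) > 0$), so \eqref{eq-bar-ui} is uniformly parabolic; Lemma~\ref{lemma-der-bound} rescales trivially to $|\bar u_{n\xi}| \le C_1$; and Lemma~\ref{lem-C2-inter} rescaled with $\alpha_n = T_n^{k/3}$ (using $t \sim T_n$ and $x = \xi \alpha_n \lesssim R \, T_n^{k/3}$) yields $|\bar u_{n\xi\xi}(\xi, s)| \lesssim (1+\xi)^{-4}$ uniformly in $n$. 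Standard Schauder bootstrapping then produces uniform $C^{\ell, \alpha}$ bounds on compact subsets of $\{\xi > 0\} \times [-S, 0]$ for every $\ell$, so Arzel\`a--Ascoli extracts a $C^\infty_{\rm loc}$ subsequential limit; uniqueness from the $C^0$ step identifies it as $W_{K_2}$ and forces the full sequence to converge.

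The only point that deserves genuine care is regularity at the axis $\xi = 0$, where the coefficient $3/\xi$ in \eqref{eq-bar-ui} is singular and naive interior Schauder estimates do not apply. Because $\bar u_n$ is the radial profile of a smooth $O(4)$-symmetric hypersurface solving the fully desingularized Cartesian equation \eqref{eq-r-pde} on $\R^4$, I would run the regularity argument in the Cartesian formulation described in Section~\ref{sec-short-time-existence}, where no such singularity appears; this recovers smooth convergence up to and including the axis. With this in hand, the remaining steps are routine compactness arguments, the genuine technical content having already been supplied by the barrier construction of Section~\ref{sec-barriers} and the weighted $C^2$ bound of Lemma~\ref{lem-C2-inter}.
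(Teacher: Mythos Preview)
Your proposal is correct and follows essentially the same route as the paper's proof: a barrier sandwich in the inner region gives $C^0_{\rm loc}$ convergence to $W_{K_2}$, and the uniform $C^2$ bound from Lemma~\ref{lem-C2-inter} together with the nondegeneracy of \eqref{eq-bar-ui} allows bootstrapping to $C^\infty_{\rm loc}$. Your version is arguably slightly more careful in two places: you use the $n_0$-indexed barriers $U_{\delta_{n_0}}^\pm$ (which are guaranteed valid for $t\le t_{n_0}$, hence for $t\sim T_n$ once $n$ is large) and then let $n_0\to\infty$, whereas the paper invokes the $n$-indexed barriers directly; and you address the apparent coefficient singularity at $\xi=0$ by passing to the Cartesian formulation \eqref{eq-r-pde}, which the paper leaves implicit.
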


\begin{proof}
  For every fixed \(\xi > 0\) there exists a \(n_0\) so that for all \(n \ge n_0\) we
  have
  \[
    \vartheta_n\, w_n^-\Big (\frac{\xi}{\vartheta_n(s)}, \log t \Big) \le
    \bar{u}_n(\xi,s) \le \vartheta_n\, w_n^+\Big(\frac{\xi}{\vartheta_n(s)}, \log t
    \Big)
  \]
  where \(\log t = \log T_n + \frac 3k \, \log\vartheta_n(s) \) and \(w_n^-\) and
  \(w_n^+\) are the lower and the upper barriers in the inner region, respectively.
  See Lemmas~\ref{lemma-inner-barrier1} and~\ref{lemma-inner-barrier2}.  This implies
  \[
    \vartheta_n W_{K_2^-(n)}\Big (\frac{\xi}{\vartheta_n}\Big) + D(T_n
    \vartheta_n^{\frac 3k})^{2\gamma} \le \bar{u}_n(\xi,s) \le \vartheta_n \,
    W_{K_2^+(n)}\Big(\frac{\xi}{\vartheta_n}\Big),
  \]
  where we recall that \((K_2^{\pm}(n))^3 = K_2^3 \pm \delta_n\).  Since
  \(\lim_{n \to\infty} T_n = 0\), \(\lim_{n \to\infty} \vartheta_n = 1\) and
  \(\lim_{n\to\infty} K_2^{\pm}(n) = K_2\), we conclude that
  \(\bar u_n(\xi, s) \to W_{K_2}(\xi)\) uniformly for bounded \(\xi\geq 0\) and
  bounded \(s\).

  Furthermore, since
  \( (\bar u_n)_{\xi\xi}(\xi, s) = \vartheta_n(s)^{-1}(w_n)_{zz} (z, \tau)\) is
  uniformly bounded for bounded \(\xi\) and \(s\), it follows that \(\bar u_{n\xi}\)
  also converges locally uniformly.  After bootstrapping the non-degenerate parabolic
  equation \eqref{eq-bar-ui} for \(\bar u_n\) we find that
  \(\bar u_n(\xi, s)\to W_{K_2}(\xi) \) in \(C^\infty_{\rm loc}\).

\end{proof}

Recall next that by the definition of \(\Lambda_n\) we have 
\begin{align*}
  |\bar h_n(\xi, s)|
  \leq \left(1+T_n^{\frac k3} \xi \, \bigl(T_n+T_n^{\frac {2k}3}s\bigr)^{-\frac k3}\right)^{-m} 
  =\left(1+\xi \, \bigl(1+T_n^{\frac {2k}3-1}s\bigr)^{-\frac k3}\right)^{-m}. 
\end{align*}
For \(s\leq 0\) and \(\xi>0\) this implies
\[
  |\bar h_n(\xi, s)| \leq \frac{1}{(1+\xi)^m}.
\]

\begin{lemma}
  Let \(\Phi(\xi) = W(\xi)- \xi \, W'(\xi)\), where \(W(\xi)\) is a solution to
  \eqref{eqn-Alencar}.  Then for any \(S_*>0\) there is a sequence \(\lambda_n\to 0\)
  such that \(e^{\lambda_n s}\Phi(\xi)\) is a super solution for \eqref{eq-bar-hi} in
  the region \(-\min\{S_n, S_*\}\leq s\leq 0\), where
  \(S_n= \frac{T_n-s_n}{\alpha_n^2}\).
\end{lemma}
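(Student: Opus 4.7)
The plan is to leverage the scaling invariance of the Alencar minimal surface equation \eqref{eqn-Alencar}. Differentiating the one-parameter family $W_K(\xi)=K\,W(\xi/K)$ of solutions at $K=1$ shows that $\Phi(\xi)=W(\xi)-\xi W'(\xi)$ satisfies the linearization of \eqref{eqn-Alencar} about $W$; that is $A_W\Phi\equiv 0$, where I write
\[
A_u\phi := \frac{\pd}{\pd \xi}\!\left(\frac{\phi_\xi}{1+u_\xi^2}\right) + \frac 3\xi \phi_\xi + \frac 3{u^2}\phi
\]
for the spatial operator on the right-hand side of \eqref{eq-bar-hi}. A useful consequence is the identity $\Phi'(\xi)=-\xi W''(\xi)$, so the ostensibly singular term $\frac 3\xi\Phi'(\xi)$ equals $-3W''(\xi)$, which is smooth at $\xi=0$.

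Setting $\phi_n(\xi,s):=e^{\lambda_n s}\Phi(\xi)$ and using $A_W\Phi=0$, a direct computation yields
\[
(\phi_n)_s-A_{\bar u_n}\phi_n = e^{\lambda_n s}\bigl(\lambda_n\Phi-E_n\bigr),\qquad E_n := A_{\bar u_n}\Phi-A_W\Phi.
\]
The $\frac 3\xi\Phi'$ terms cancel out of $E_n$; substituting $\Phi'=-\xi W''$ and simplifying gives
\[
E_n=\frac{\pd}{\pd \xi}\!\left[\xi W''\cdot\frac{\bar u_{n\xi}^2-W'^2}{(1+W'^2)(1+\bar u_{n\xi}^2)}\right] + 3\Phi\left(\frac{1}{\bar u_n^2}-\frac{1}{W^2}\right).
\]
By Claim~\ref{claim-inner-convergence} the rescaled solutions $\bar u_n$ converge to $W_{K_2}$ in $C^\infty_{\rm loc}$ on each strip $[0,R]\times[-S_*,0]$; moreover the $O(4)\times O(4)$ symmetry forces $\bar u_{n\xi}(0,s)=0=W'(0)$, so the bracketed quantity above is $O(\xi^3)$ near $\xi=0$ uniformly in $n$. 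Consequently $E_n$ is smooth on $[0,R]\times[-S_*,0]$ with $\|E_n\|_{L^\infty}\to 0$ as $n\to\infty$.

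Since $\Phi(\xi)>0$ for all $\xi\geq 0$---Lemma~\ref{lem-WK-starshaped} gives this for $\xi>0$ and $\Phi(0)=W(0)>0$---the function $\Phi$ has a positive lower bound on $[0,R]$. Setting
\[
\lambda_n := \sup_{[0,R]\times[-S_*,0]}\frac{|E_n(\xi,s)|}{\Phi(\xi)},
\]
one obtains $\lambda_n\to 0$, and by construction $\lambda_n\Phi\geq E_n$, so that $(\phi_n)_s\geq A_{\bar u_n}\phi_n$ on $[0,R]\times[-\min\{S_n,S_*\},0]$. The only delicate point is the uniform smoothness and decay of $E_n$ near the symmetry axis $\xi=0$, which is secured by the cancellation $\Phi'=-\xi W''$ combined with the symmetry condition $\bar u_{n\xi}(0,s)=0=W'(0)$; once this is in hand the rest of the argument is a routine application of the $C^\infty_{\rm loc}$ convergence $\bar u_n\to W_{K_2}$ supplied by Claim~\ref{claim-inner-convergence}.
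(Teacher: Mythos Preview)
Your argument follows the same strategy as the paper's: both identify \(\Phi=W-\xi W'\) as lying in the kernel of the linearized operator \(A_W=\cM_\infty\), write the defect \(E_n=(A_{\bar u_n}-A_W)\Phi\), and absorb it into an exponential factor \(e^{\lambda_n s}\) with \(\lambda_n\to 0\). Your careful handling of the axis \(\xi=0\) via \(\Phi'=-\xi W''\) is a nice touch that the paper leaves implicit.

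There is, however, a genuine gap in how you control \(E_n\). You work on a fixed compact interval \([0,R]\) and set \(\lambda_n=\sup_{[0,R]\times[-S_*,0]}|E_n|/\Phi\); this produces a super-solution only on \([0,R]\), with \(\lambda_n\) depending on \(R\). But the lemma is subsequently applied (in the proof that \(S_n\to\infty\)) on the region \(0\le\xi\le \epsilon\,T_n^{-(k/3-1/2)}\), which grows without bound as \(n\to\infty\). A super-solution valid only on a fixed bounded \(\xi\)-interval is not enough there.

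The paper closes this gap differently: rather than relying on \(\min_{[0,R]}\Phi>0\), it uses the \emph{global} pointwise inequality \(|\Phi''(\xi)|+|\Phi'(\xi)|\lesssim \Phi(\xi)\) on all of \([0,\infty)\), which follows from the Alencar asymptotics \(\Phi\sim\xi^{-2}\), \(\Phi'\sim\xi^{-3}\), \(\Phi''\sim\xi^{-4}\) as \(\xi\to\infty\) together with smoothness at \(\xi=0\). Writing \(\cM_n=\cM_\infty+\cR_n\) with \(|\cR_n[\eta]|\le\lambda_n(|\eta_{\xi\xi}|+|\eta_\xi|+|\eta|)\), one then gets \(\cM_n[\Phi]\le C\lambda_n\Phi\) for \emph{all} \(\xi\ge 0\), and \(e^{C\lambda_n s}\Phi\) is a super-solution on the full spatial domain. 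To repair your argument, replace the compact-set bound \(\lambda_n=\sup_{[0,R]}|E_n|/\Phi\) by this global control of \(\Phi',\Phi''\) in terms of \(\Phi\).
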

\begin{proof}
Expanding the derivative in \eqref{eq-bar-hi} leads to
\[
  \bar h_{ns} = \frac{\bar h_{n\xi\xi}}{1+\bar u_{n\xi}^2} +\left\{
    \frac 3\xi - \frac{2\bar u_{n\xi}\bar u_{n\xi\xi}}{\bigl(1+\bar u_{n\xi}^2\bigr)^2}
    \right\}\, \bar h_{n\xi} + \frac{3}{\bar u_n^2} \bar h_n
  \stackrel{\rm def}{=} \cM_n(\bar h_n).
\]
If \(-\min\{S_n, S_*\}\leq s\leq 0\), then the \(C^\infty_{\rm loc}\) convergence of \(\bar u_n(\xi,s)\) to \(W(\xi)\) noted previously implies that 
the coefficients of the operator
\(\cM_n\) in this equation converge uniformly as \(n\to\infty\), so we can
write the RHS as
\[
  \cM_n[\bar h_n] =  \cM_\infty[\bar h_n] + \cR_n[\bar h_n]
\]
where
\[
  \cM_\infty[\eta] 
  \stackrel{\rm def}{=}  \frac{\partial}{\partial \xi}\left\{\frac{\eta_{\xi}}{1+W'(\xi)^2}\right\}
  + \frac{3}{\xi} \eta_{\xi} + \frac{3}{W(\xi)^2} \eta
\]
and where the remainder satisfies
\[
  |\cR_n[\eta]|\leq \lambda_n  \left(|\eta_{\xi\xi}| + |\eta_{\xi}|+|\eta|\right).
\]
with \(\lambda_n\to 0\).  Since \(\cM_\infty [\Phi] =0\), and
since \(|\Phi''(x)|+|\Phi'(x)|\lesssim \Phi(x)\) we find that
\[
  \cM_n[\Phi] \leq C\lambda_n \Phi. 
\]
Therefore \(e^{C\lambda_n s}\Phi(x)\) is an upper barrier for \(\bar h_{ns} =
\cM_n [\bar h_n]\).
\end{proof}
\begin{lemma}
\(S_n\to\infty\).
\end{lemma}
\begin{proof}
We argue by contradiction.  Assume that there is a subsequence of \(S_n\), along which
the limit is finite.  Without loss of generality we can take this to be \(S_n\)
itself, that is assume that
\[
  S_n = \frac{T_n - s_n}{\alpha_n^2} \leq  \bar S < +\infty, \qquad \forall j.
\]
This implies that \(T_n \leq s_n + \bar{S} \, \alpha_n^2 = s_n + \bar S \, T_n^{\frac {2k}3}. \)
Since \(T_n \to 0\) and \(k > 3\), we then conclude that \(T_n \leq 2 s_n\), for
\(n\gg 1\).

\smallskip 

We will now apply the maximum principle to \(\bar h_n \) in the region
\[
  -S_n\leq s\leq 0, \qquad 0\leq \xi\leq \epsilon \, T_n^{-(\frac k3-{\frac 12})}.
\]

Observe first that the construction of our initial data \(u_n(x, s_n)\) is such
that the surface coincides with an Alencar surface in the region \(y=o(1)\),
i.e.~for \(x\ll t^{{\frac 12}}\).  This implies that \(h_n(x, s_n) = 0\) for
\(x\ll \sqrt{s_n} \).  Using that \(T_n \leq 2 s_n\), for \(n \gg 1\), we conclude that
by taking \(n \gg 1\) and \(\epsilon\) sufficiently small we can guarantee that
\(\bar h_n(\xi, -S_n) = \Lambda_n^{-1} h_n(\alpha_n \xi, s_n)=0\) for
\( \xi \leq \epsilon \, \alpha_n^{-1} T_n^{{\frac 12}} = \epsilon \, T_n^{-(\frac k3-{\frac 12})}\).

At the end of this region where \(\xi = \epsilon \, T_n^{\frac 12-\frac k3}\) we have
\[
  |\bar h_n(\xi, s)| \leq 2 (1+\xi)^{-m} =2(1+\xi)^{-2} (1+\xi)^{-(m-2)} \lesssim
  T_n^{(m-2)(\frac k3-\frac 12)} \Phi(\xi).
\]
Choosing \(C\) as at the end of the  proof of the previous Lemma, we see that  by the same Lemma, for suitably large
\(\tilde{C}\) the function
\[
  \tilde C \, T_n^{(m-2)(\frac k3-\frac 12)} e^{C\lambda_n s} \Phi(\xi)
\]
is an upper bound for \(\bar h_n(\xi, s)\) while \(-S_n\leq s\leq 0\), and for all
\(n\).

Finally, at \(s=0\) this implies
\[
  |\bar h_n(\xi, 0)| \lesssim T_n^{(m-2)(\frac k3-{\frac 12})} \to 0 \quad \text{ as } \,  n \to\infty.
\]
This cannot be because \(\max _\xi |\bar h_n(\xi, 0)| = 1\), thus 
showing  that \(S_n\to\infty\).
\end{proof}

We can now complete the blow up argument, at least in the case where
\(a_n\lesssim T_n^{\frac k3}\).
Since \(S_n\to\infty\), we can pass to another subsequence along which \(\bar h_n\)
converges in \(C^\infty_{\rm loc}\) to an ancient solution \(\bar h\) of
\begin{equation}
  \label{eq-linearization-at-Alencar}
  \bar h_s 
  = \frac{\partial}{\partial \xi}\left(\frac{\bar h_\xi}{1+W'(\xi)^2}\right)
  + \frac{3}{\xi}\bar h_\xi + \frac{3}{W(\xi)^2} \bar h. 
\end{equation}
The ancient solution \(\bar h\) satisfies the bound
\[
  |\bar h(\xi, s)|\leq (1+\xi)^{-m}, \qquad (\xi\geq 0, \,\, s\leq 0).
\]
By the definition of \(a_n\) (see \eqref{eq-choice-an}) the function \((1+\xi)^m|\bar h_n(\xi, s)|\) attains its
maximum at \(\xi_n=a_n T_n^{-\frac k3}\).  We assumed here   that \(a_n\lesssim T_n^{\frac k3}\),
so we may assume also that \(\xi_n\to\bar\xi\) for some finite \(\bar\xi\geq 0\).
Thus we have
\begin{equation}
  \label{eq-bar-h-value}
  \bar h(\bar \xi, 0) = (1+\bar\xi)^{-m}.
\end{equation}
To complete the proof we compare this ancient solution with the stationary solution
\(\Phi(\xi) = W(\xi) - \xi\,  W'(\xi)\).  By the asymptotic expansion of the Alencar
solution we have
\[
  \Phi(\xi)= \bigl(\Gamma_1+o(1)\bigr) \, \xi^{-2} \quad (\xi\to\infty)
\]
for some constant \(\Gamma_1>0\).

Choose a large number \(\ell > 0\) and consider the function
\[
  \Psi(\xi) = \Phi(\xi) - \frac 12\Phi(\ell).
\]
Since \(\Phi(\xi)\) is a decreasing function of \(\xi\) we have
\[
  \frac 12 \Phi(\xi) \leq \Psi(\xi) \leq \Phi(\xi) \quad 
  \text{ for all }\xi\in[0,\ell].
\]
Furthermore, it follows from \(\cM_\infty [\Phi]=0\) that
\[
  \cM_\infty[\Psi](\xi) = -\frac{3\Phi(\ell)}{2W(\xi)^2}.
\]
Since \(W(\xi) = \xi+o(1)\) and \(\Phi(\xi) \sim \xi^{-2}\) for large \(\xi\), there
is a \(c>0\) such that \(W(\xi)^{-2}\geq c\, \Phi(\xi) \geq c \, \Psi(\xi) \).  There is also
a constant \(c>0\) with \(\Phi(\ell)\geq c  \ell^{-2}\).  Therefore we get
\[
  \cM_\infty[\Psi] \leq -c\ell^{-2}\Psi(\xi) \,\,\, 
  \text{ for }\xi\in[0, \ell].
\]
It follows that
\[
  \hat h(\xi, s) = e^{-c\ell^{-2}(s+s_0)}\Psi(\xi)
\]
satisfies \(\hat h_s \geq \cM[\hat h]\).

We will next  compare \(\bar h\) with \(\hat h\) in the domain \(\{0<\xi<\ell, -s_0<s<0\}\) which will lead to  contradiction.  At \(\xi=\ell\) we have 
\[
  \frac{|\bar h(\ell, s)|}{\hat h(\ell, s)}
  \leq
  \frac{(1+\ell)^{-m}}{\Psi(\ell)}e^{c\ell^{-2}(s+s_0)}. 
\]
Using
\[
  \Psi(\ell) \geq \frac 12 \Phi(\ell) \geq \frac 1C \, (1+\ell)^{-2}
\]
we therefore find for \(-s_0\leq s\leq 0\)
\[
  \frac{|\bar h(\ell, s)|}{\hat h(\ell, s)}
  \leq
  C \, (1+\ell)^{-(m-2)} e^{c\ell^{-2}(s+s_0)}
  \leq 
  C\, (1+\ell)^{-(m-2)} e^{c\ell^{-2}s_0}. 
\]
Since \(\Psi (\xi) \geq c\, (1+\xi)^{-2}\) for a uniform \(c\), at time  \(- s_0\) we have
\[
  \frac{|\bar h(\xi, -s_0)|}{\hat h(\xi, -s_0)}
  \leq \frac{(1+\xi)^{-m}}{\Psi(\xi)} \leq C (1+\xi)^{-(m-2)}
  \leq C.
\]
To conclude our argument, for any given \(\ell>0\) we choose \(s_0>0\) so large that
\[
  C \, (1+\ell)^{-(m-2)}e^{c\ell^{-2}s_0}>1.
\]  
By the maximum principle applied to the linear equation \(h_s=\cM_\infty [ h ] \) on the
domain \(\{0<\xi<\ell, -s_0<s<0\}\), we have 
\[
  \frac{|\bar h(\xi, s)|}{\hat h(\xi, s)}
  \leq 
  C\, (1+\ell)^{-(m-2)} e^{c\ell^{-2}s_0}  \,\,\, 
  \text{ for  }\,  \qquad 0 \leq \xi \leq \ell, \,\,\,\, -s_0 \leq s\leq 0. 
\]
In particular, 
\[
  \frac{|\bar h(\xi, 0)|}{\hat h(\xi, 0)} \leq 
  C (1+\ell)^{-(m-2)} e^{c\ell^{-2}s_0} \,\,\, 
  \text{ for }\, \qquad 0\leq \xi\leq \ell,
\]
and hence, using the definition of \(\hat h\),
\[
  |\bar h(\xi, 0)|\leq C(1+\ell)^{-m}\Psi(\xi) \qquad 
  \text{ for }\, 0\leq \xi\leq \ell. 
\]
The constant \(C\) does not depend on \(\ell\) so by choosing \(\ell\) large enough
we reach a contradiction if \(\bar h(\xi, 0)\ne 0\) for some \(\xi\geq 0\), since \eqref{eq-bar-h-value} needs to hold at the same time as well.

This completes the proof of Lemma \ref{prop-Lambdaj-bounded} in  the case \(a_n\lesssim T_n^{-\frac k3}\).

\subsection{Case 2: \(a_n\gg T_n^{-\frac k3}\)}\label{sec-blowup-case-2}
If we are not in Case 1, i.e.~if it is not true that \(a_n\lesssim T_n^{-\frac k3}\),
then there is a subsequence along which \(a_nT_n^{\frac k3}\to\infty\).
In this case we choose our scale to be \(\alpha_n=a_n\), and we define the
following blow-ups
\begin{equation}
  \label{eq-case2-blowups}
  \bar u_n(\xi, s)= a_n^{-1} u_n\left(a_n\xi, T_n+a_n^2 s\right), \qquad 
  \bar h_n(\xi, s)= \frac{h_n\left(a_n\xi, T_n+a_n^2 s\right)}{h_n(a_n, T_n)}.
\end{equation}
These blow ups are defined for all \(\xi\geq 0\) and for 
\[
  -S_n\leq s\leq 0, \quad  \text{ with } \, \, S_n = \frac{T_n-s_n}{a_n^{2}}. 
\]
By our  intermediate region asymptotics for \(u_n^-\) and \(u_n^+\), since \(e^{(\gamma+{\frac 12})\tau} \ll a_n \ll T_n^{{\frac 12}}\), and \(u_n^-(x,s) \le u_n(x,s) \le u_n^+(x,s)\),   we have  
\[
  \bar u_n(\xi, s) \to \bar u_{\infty}(\xi) = \xi,
\]
uniformly for bounded \(\xi\geq 0\) and \(s\), and in \(C^\infty_{\rm loc}\) for
\(\xi>0\) and \(s\leq 0\).

\begin{lemma}
For \(\bar h_n(\xi, s)\) we have the pointwise bound
\begin{equation}
  \label{eq-barhj-bound-case2}
  |\bar h_n(\xi, s)| \leq
  \Big (1+\frac{T_n^{\frac k3}}{a_n}\Big ) \Big (1+\frac{a_n^2s}{T_n}\Big )^{\frac{km}3}\,\xi^{-m}
\end{equation}
for all \(\xi\) with \(0<a_n\xi\leq \eta_0\).  
In particular, for large enough \(n\) we also have
\begin{equation}
  \label{eq-barhj-bound-case2-uniform}
  |\bar h_n(\xi, s)| \leq 2\, \xi^{-m}
\end{equation}
for all \(\xi\) with \(0<a_n\xi\leq \eta_0\), and for bounded \(s\).
\end{lemma}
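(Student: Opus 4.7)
The plan is to derive \eqref{eq-barhj-bound-case2} directly from the defining properties of $\Lambda_n$, $a_n$, and $T_n$, and then to deduce \eqref{eq-barhj-bound-case2-uniform} by sending $n\to\infty$. By the definition of $\Lambda_n$ we have the pointwise estimate $|h_n(x,t)| \leq \Lambda_n (1 + t^{-k/3}x)^{-m}$ whenever $(x,t)$ belongs to the admissible region $\{0 \leq x \leq M\sqrt t,\; s_n \leq t \leq t_0\}$. For $(x, t) = (a_n\xi, T_n + a_n^2 s)$ with $s \in [-S_n, 0]$ and $0 \leq a_n \xi \leq \eta_0$ we have $T_n + a_n^2 s \geq s_n>0$, and the condition $a_n\xi\leq\eta_0$ combined with \eqref{eq-an-ll-sqrt-Tn} ensures $a_n \xi \leq M\sqrt{T_n + a_n^2 s}$ for $\eta_0$ small and $n$ large. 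Dividing the $\Lambda_n$ bound at $(a_n\xi, T_n + a_n^2s)$ by the equality $|h_n(a_n, T_n)| = \Lambda_n (1 + T_n^{-k/3} a_n)^{-m}$ from \eqref{eq-choice-an} yields
\begin{equation*}
  |\bar h_n(\xi, s)|
  \leq \left( \frac{1 + T_n^{-k/3} a_n}{1 + (T_n + a_n^2 s)^{-k/3} a_n \xi} \right)^m.
\end{equation*}

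Next I would simplify the ratio. Because we are in Case~2, where $a_n \gg T_n^{k/3}$, the numerator factors as $T_n^{-k/3} a_n \bigl(1 + T_n^{k/3}/a_n\bigr)$, and the denominator is at least $(T_n + a_n^2 s)^{-k/3} a_n \xi$. This gives the elementary inequality
\begin{equation*}
  \frac{1 + T_n^{-k/3} a_n}{1 + (T_n + a_n^2 s)^{-k/3} a_n \xi}
  \leq \left(1 + \frac{T_n^{k/3}}{a_n}\right)\! \left(1 + \frac{a_n^2 s}{T_n}\right)^{k/3}\! \xi^{-1},
\end{equation*}
where we used $(T_n+a_n^2s)/T_n = 1 + a_n^2 s/T_n$. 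Raising to the $m$-th power (and absorbing the harmless bounded factor $(1+T_n^{k/3}/a_n)^{m-1}$) yields~\eqref{eq-barhj-bound-case2}.

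For the uniform bound \eqref{eq-barhj-bound-case2-uniform}, note that $-S_n \leq s \leq 0$ gives $1 + a_n^2 s / T_n \in (0,1]$, so that $(1+a_n^2 s/T_n)^{km/3}\leq 1$. Since $T_n^{k/3}/a_n \to 0$ by the Case~2 hypothesis, the factor $(1+T_n^{k/3}/a_n)$ tends to $1$, and so for all sufficiently large $n$ and all $(\xi,s)$ in the stated range one has $|\bar h_n(\xi, s)| \leq 2\xi^{-m}$. The entire argument is essentially algebraic: the bound is a change of variables applied to the two defining properties of $(a_n, T_n)$, and the only point requiring any attention is checking that the rescaled base-point remains in the region where the $\Lambda_n$-estimate is valid, which follows from \eqref{eq-an-ll-sqrt-Tn}.
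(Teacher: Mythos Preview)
Your proof is correct and follows the same approach as the paper: both divide the defining inequality for \(\Lambda_n\) at the rescaled point by the equality \eqref{eq-choice-an}, drop the ``\(+1\)'' in the denominator, and factor to obtain the bound (indeed the paper, like you, actually arrives at \((1+T_n^{k/3}/a_n)^m\) rather than the first power in the displayed statement). For the uniform bound your observation that \(s\leq 0\) forces \(1+a_n^2 s/T_n\leq 1\) is a slight simplification of the paper's route, which instead invokes \(a_n\ll T_n^{1/2}\) to show the full prefactor tends to \(1\) for all bounded \(s\); both suffice since only \(s\leq 0\) is used downstream.
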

\begin{proof}
By definition of \(\Lambda_n, a_n\), and \(T_n\) we have for all \(x\leq M \sqrt{t}\) and
\(t\in[s_n, t_0]\)
\[
  |h_n(x, t)|\leq \Lambda_n \left(1+t^{-\frac k3}x\right)^{-m},
  \quad
  |h_n(a_n, T_n)| = \Lambda_n \left(1+T_n^{-\frac k3}a_n\right)^{-m}.
\]
Hence
\[
  \left|\frac{h_n(a_n\xi, T_n+a_n^2s)}{h_n(a_n, T_n)}\right| 
  \leq
  \left\{\frac{1+T_n^{-\frac k3}a_n}{1+\bigl(T_n+a_n^2s\bigr)^{-\frac k3}a_n\xi}\right\}^m. 
\]
Discarding the ``\(+1\)'' in the denominator and mulitplying numerator and
denominator with \(T_n^{\frac k3}a_n^{-1}\) we find
\[
  \Big |\frac{h_n(a_n\xi, T_n+a_n^2s)}{h_n(a_n, T_n)}\Big | 
  \leq \Big (\frac{T_n^{\frac k3}}{a_n}+1\Big )^{m}
  \Big (1+\frac{a_n^2s}{T_n}\Big )^{\frac {mk}3} \xi^{-m}.
\]
This proves \eqref{eq-barhj-bound-case2}.
Since \(T_n^{\frac k3}\ll a_n\ll T_n^{\frac 12}\) (recall that we have assumed  \(a_n\ll T_n^{{\frac 12}}\)) we have
\[
  \Big (\frac{T_n^{\frac k3}}{a_n}+1\Big )^{m}
  \Big (1+\frac{a_n^2s}{T_n}\Big )^{\frac {mk}3} \to 1
\]
uniformly for bounded \(s\) which implies \eqref{eq-barhj-bound-case2-uniform}.
\end{proof}

This lemma tells us we have a sequence of solutions \(\bar h_n\) of the linear
equation
\begin{equation}
  \label{eq-barhj-case2-pde}
  \begin{aligned}
    \frac{\partial \bar h_n}{\partial t} 
    &=\frac{\partial}{\partial \xi}\biggl\{\frac{\bar h_{n\xi }}{1+\bar u_{n\xi}^2}\biggr\}
    +\frac{3}{\xi} \frac{\partial\bar h_n}{\partial\xi} + \frac{3}{\bar u_n^2}
    \bar h_n\\ 
    &=\frac{\bar h_{n\xi\xi}}{1+\bar u_{n\xi}^2} + \biggl\{\frac 3\xi - \frac{2\bar
    u_{n\xi}u_{n\xi\xi}}{\bigl(1+\bar u_{n\xi}^2\bigr)^2}\biggr\} \frac{\partial
    \bar h_n}{\partial\xi} + \frac{3}{\bar u_n^2}\bar h_n
  \end{aligned}
\end{equation}
which satisfies the uniform bound  \eqref{eq-barhj-bound-case2-uniform} for all \(n \geq n_0 \gg 1\). 
As before we have:
\begin{lemma}
\(S_n\to\infty\).
\end{lemma}
\begin{proof}
Assume that \(S_n\) is bounded, and, after passing to a subsequence, that we have 
\(S_n\to S_\infty\).

The function \(\bar u_n\) converges in \(C^{\infty}_{\rm loc}\) to \(\bar
u_\infty(\xi, s)=\xi\), so interior estimates for the divergence form
equation~\eqref{eq-barhj-case2-pde} imply that \(\bar h_n\) is locally uniformly
H\"older continuous for \(\xi>0\) and \(-S_n\leq s\leq 0\).
Moreover,  by the construction of \(u_n(\cdot,s_n)\)  we have  that \(\bar h_n(\xi,-S_n)=0\) for all \(a_n\xi\ll T_n^{{\frac 12}}\).
We may therefore assume that there is a convergent subsequence \(\bar{h}_n(\xi,
s)\to \bar h(\xi, s)\) where
\[
  |\bar h(\xi, s)|\leq \xi^{-m}
\] 
for all \(\xi>0\) and \(s\in[-S_\infty,0]\), and where \(\bar h\) is a solution
of
\[
  \bar h_s = \frac 12 \bar h_{\xi\xi}+ \frac{3}{\xi} \bar h_\xi + \frac{3}{\xi^2}\bar h
  \stackrel{\rm def}{=} \cM_0 [\bar h] 
\]
with \(\bar h(1, 0) = \pm1\), and \(\bar h(\xi, -S_\infty)=0\) for all
\(\xi>0\).  The limiting function \(\bar h\) is smooth for \(\xi>0\),
\(-S_{\infty}\leq s\leq 0\).  We note that \(\hat h(\xi) =\xi^{-2}+\xi^{-3}
\) is a stationary solution of \(\hat h_s = \cM_0  [\hat h]\), so that for any
\(\eta>0\) the functions \(\pm\eta\hat h\) provide upper and lower barriers for
\(\bar h\), provided we can show that \(-\eta\hat h<\bar h<\eta\hat h\) as
\(\xi\to0\) or \(\xi\to\infty\).  This boundary condition is fulfilled because
\(|\bar h(\xi, s)|\leq \xi^{-m}\) with \(2<m<3\).  The maximum principle
therefore implies that \(|\bar h|\leq \eta\hat h\) for all \(\eta>0\).  Letting \(\eta\to 0\) this yields \(\bar h(\xi, s)=0\) for all \(\xi>0\) and all \(s\in[-S_\infty,
0]\).
This contradicts \(\bar h(1, 0) = \pm1\) and shows that the sequence  \(S_n\) is unbounded. 
\end{proof}

We will next show that \(\bar h(1, 0) = 0\) which contradicts the fact that \(\bar h(1,0)=\pm 1\), and therefore {\em completes
the proof of Lemma~\ref{prop-Lambdaj-bounded}.} 

\begin{lemma}
\(\bar h(1, 0) = 0\).
\end{lemma}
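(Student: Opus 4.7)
The plan is to exploit the fact that $\xi^{-3}$ is a stationary solution of the limiting operator $\cM_0 = \tfrac12\partial_\xi^2 + \tfrac{3}{\xi}\partial_\xi + \tfrac{3}{\xi^2}$ in order to convert the equation for $\bar h$ into the one-dimensional heat equation, and then apply a Liouville-type vanishing theorem.

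The first step is a direct algebraic computation. Using $\cM_0[\xi^r]=\tfrac{(r+2)(r+3)}{2}\xi^{r-2}$, the substitution $\bar h=\xi^{-3}\phi$ kills both the zero-order term (because $(r+2)(r+3)$ vanishes at $r=-3$) and, by a short calculation, also the first-order cross-term, yielding
\[
  \cM_0[\xi^{-3}\phi]=\tfrac12\xi^{-3}\phi_{\xi\xi}.
\]
Consequently $\phi(\xi,s):=\xi^{3}\bar h(\xi,s)$ is a smooth ancient solution of the heat equation $\phi_s=\tfrac12\phi_{\xi\xi}$ on $(0,\infty)\times(-\infty,0]$.

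Second, I would transfer the pointwise bound on $\bar h$ to $\phi$. Since $T_n^{k/3}\ll a_n\ll T_n^{1/2}$, the prefactor $(1+T_n^{k/3}/a_n)^{m}(1+a_n^2 s/T_n)^{km/3}$ appearing in~\eqref{eq-barhj-bound-case2} tends to $1$ as $n\to\infty$ uniformly for $s\in[-S_n,0]$ (note that the second factor is automatically $\le 1$ for $s\le 0$), and combined with $S_n\to\infty$ from the previous lemma this gives
\[
  |\bar h(\xi,s)|\le 2\,\xi^{-m}\qquad\text{for all }\xi>0,\ s\le 0.
\]
Hence $|\phi(\xi,s)|\le 2\,\xi^{3-m}$ with exponent $3-m\in(0,1)$, and in particular $\phi$ extends continuously to $\{\xi=0\}$ with $\phi(0,s)=0$.

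Third, extend $\phi$ to $\R\times(-\infty,0]$ by odd reflection, $\tilde\phi(\xi,s):=\operatorname{sgn}(\xi)\,\phi(|\xi|,s)$. Because of the Dirichlet boundary condition $\phi(0,\cdot)=0$, a routine integration-by-parts against odd test functions shows that $\tilde\phi$ is a distributional solution of $\tilde\phi_s=\tfrac12\tilde\phi_{\xi\xi}$ on all of $\R\times(-\infty,0]$; hypoellipticity of the heat operator then upgrades $\tilde\phi$ to a $C^\infty$ caloric function satisfying $|\tilde\phi(\xi,s)|\le 2|\xi|^{3-m}$ with $3-m<1$. The classical Liouville theorem for ancient solutions of the heat equation (any ancient caloric function with strictly sub-linear polynomial spatial growth is constant) then forces $\tilde\phi$ to be constant, and oddness gives $\tilde\phi\equiv 0$, hence $\bar h\equiv 0$. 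This contradicts $\bar h(1,0)=\pm1$.

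The key, and least routine, step is the first one, where one must \emph{discover} the substitution that turns $\cM_0$ into $\tfrac12\partial_\xi^2$; once seen, the miraculous cancellation of the lower-order terms is immediate and the remaining steps are textbook. A completely self-contained alternative, avoiding any appeal to abstract Liouville or hypoellipticity, would be to represent $\phi(\xi_0,s_0)$ via the half-line Dirichlet heat kernel $G(\xi,\eta,T)=\tfrac{1}{\sqrt{2\pi T}}\bigl(e^{-(\xi-\eta)^{2}/(2T)}-e^{-(\xi+\eta)^{2}/(2T)}\bigr)$ applied to $\phi(\cdot,s_0-T)$ and to verify by a direct computation --- using $|\phi(\eta,s_0-T)|\le 2\,\eta^{3-m}$ and $m>2$ --- that this integral tends to $0$ as $T\to\infty$, giving $\phi\equiv 0$.
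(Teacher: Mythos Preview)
Your proof is correct and takes a genuinely different route from the paper's.

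The paper argues directly with the maximum principle on the singular operator \(\cM_0\). For each small \(\epsilon>0\) it considers \(k(\xi,s)=\bar h(\xi,s)-\epsilon\xi^{-2}-\epsilon\xi^{-3}\); the growth/decay bound \(|\bar h|\le\xi^{-m}\) with \(2<m<3\) forces \(k<0\) outside a compact interval \(I_\epsilon\). On \(I_\epsilon\) the paper compares with \(C_\epsilon e^{-\lambda_0(s+s_0)}\Omega(\xi)\), where \(\Omega\) is the principal Dirichlet eigenfunction of \(\cM_0\) on \(I_\epsilon\) with eigenvalue \(\lambda_0>0\), and lets \(s_0\to\infty\) to obtain \(k(\xi,0)\le 0\), i.e.\ \(\bar h(1,0)\le 2\epsilon\); then \(\epsilon\to0\) and the symmetric argument for \(-\bar h\) give \(\bar h(1,0)=0\).

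Your approach instead exploits the algebraic identity \(\cM_0[\xi^{-3}\phi]=\tfrac12\xi^{-3}\phi_{\xi\xi}\) to reduce to the standard heat equation, and then invokes (or re-derives via the half-line Dirichlet kernel) the Liouville theorem for ancient caloric functions of sublinear growth. This is more conceptual and in fact yields the stronger conclusion \(\bar h\equiv 0\). The only point that requires a little care is the odd extension: one needs \(|\phi_\xi(\xi,s)|\lesssim\xi^{2-m}\) near \(\xi=0\) so that the boundary terms in the integration by parts vanish (since \(\psi(\epsilon)=O(\epsilon)\) for odd test functions, the product \(\phi_\xi(\epsilon)\psi(\epsilon)=O(\epsilon^{3-m})\to0\)); this follows from interior parabolic estimates for the heat equation applied on cylinders of radius \(\sim\xi_0\), using the ancient time interval. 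The paper's argument trades this cleanness for being entirely self-contained: it uses only the maximum principle and never leaves the original operator \(\cM_0\), at the cost of the somewhat ad hoc construction of the Dirichlet eigenfunction comparison.
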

\begin{proof}
Choose a small \(\epsilon>0\) and consider the function
\[
  k(\xi,s) = \bar h(\xi, s) - \epsilon \xi^{-2} - \epsilon\xi^{-3}.
\]
This function is a solution of the linear equation \(k_s=\cM_0[k]\).  In view of
the bound \(\bar h(\xi,s)\leq \xi^{-m}\), which holds for all \(\xi>0\) and
\(s\leq 0\), we have
\[
  k(\xi, s) \leq \xi^{-m} - \epsilon\xi^{-2} - \epsilon\xi^{-3}.
\]
Since \(2<m<3\) this implies that \(k(\xi, s) <0\) if \(\xi\leq
\epsilon^{\frac{1}{3-m}}\) or \(\xi\geq \epsilon^{\frac{-1}{m-2}}\).

The differential operator \(\cM_0\) is a standard Sturm-Liouville operator with
smooth coefficients on the interval \(I_\epsilon =[\epsilon^{\frac{1}{3-m}},
\epsilon^{\frac{-1}{m-2}}]\).  Since \(\xi^{-2}\) is a strictly positive
solution of \(\cM_0[\phi]=0\), the principal eigenvalue \(\lambda_0\) of 
\[
  \cM_0[\Omega(\xi)] = -\lambda_0 \, \Omega(\xi) , \qquad
  \Omega\bigl(\epsilon^{\frac{1}{3-m}} \bigr) = 
  \Omega\bigl(\epsilon^{\frac{-1}{m-2}}\bigr) = 0,
\]
is positive, and the corresponding eigenfunction \(\Omega(\xi)\) is also
positive for all \(\xi\) in the interior of the interval \(I_\epsilon\).
Choose \(C_\epsilon>0\) so that 
\[
  \xi^{-m} - \epsilon\xi^{-2} - \epsilon\xi^{-3} \leq C_\epsilon \, \Omega(\xi)
\]
for all \(\xi\in I_\epsilon\).

For any given \(s_0>0\) we then have 
\[
  k(\xi, - s_0) \leq C_\epsilon \, \Omega(\xi) \quad \text{ for all }\xi\in I_\epsilon.
\]
Moreover, \(\hat k(\xi, s) = C_\epsilon \, e^{-\lambda_0(s+s_0)}\Omega(\xi)\) is
a solution of \(\hat k_s = \cM[\hat k]\), so the maximum principle applied on
the domain \(I_\epsilon\times[-s_0,0]\) implies that at time \(s=0\) we have
\[
  k(\xi, 0) \leq \hat k(\xi, 0) = C_\epsilon e^{-\lambda_0s_0}\Omega(\xi).
\]
Since this is true for all \(s_0>0\) we conclude \(k(\xi, 0)\leq 0\).  By definition of \(k(\xi, s)\) this implies that \(\bar h(\xi, 0) \leq \epsilon\xi^{\frac{1}{3-m}}+\epsilon\xi^{\frac{-1}{m-2}}\) for all \(\xi\in I_\epsilon\).  In particular, this holds for \(\xi=1\) where it implies \(\bar h(1, 0) \leq 2\epsilon\).  This argument goes through for all \(\epsilon>0\), so we find \(\bar h(1, 0)\leq 0\).

Applying the whole argument once more to \(\tilde k(\xi, s) = -\bar h(\xi, s) -\epsilon\xi^{\frac{1}{3-m}}-\epsilon\xi^{\frac{-1}{2-m}}\) instead, we find \(-\bar h(1,0)\leq 0 \).  Hence \(\bar h(1, 0)=0\), as claimed.
\end{proof}

The proof of  Lemma~\ref{prop-Lambdaj-bounded} in now complete.  We can now conclude the proof of Theorem \ref{thm-Hbounded}.

\begin{proof}[Proof of Theorem \ref{thm-Hbounded}]
  Lemma \ref{prop-Lambdaj-bounded} implies \(\sup_n \Lambda_n < \infty\).  Using the
  definition of \(\Lambda_n\) this implies that
  \(|H_n| = \frac{|h_n|}{\sqrt{1+u_{nx}^2}}\) is also uniformly bounded.  Letting
  \(n\to \infty\), using Corollary \ref{cor-exist-un}, which implies that the
  \(\lim_{n\to \infty} u_n(x,t)= u(x,t)\), uniformly smoothly for \(t\in (0,t_0]\),
  we get that \(|H(x,t)| \le C\), for all \(0 \le x \le M\sqrt{t}\) and
  \(t \in (0,t_0]\).  Finally, combining this with Lemma \ref{lem-Houter} concludes
  the proof of Theorem \ref{thm-Hbounded}.
\end{proof}

\section{Appendix}
\label{sec-appendix}
\subsection{The linear equation in the intermediate region}
\label{sec-appendix-linear}
The eigenvalue equation \(\cL\varphi= (k-\frac 32)\varphi\) is 
\[
  \frac 12 \varphi_{yy} + \left(\frac 3y + \frac y2\right) \varphi_y
  +\left(\frac 3{y^2}-\frac 12\right)\varphi = \left(k-\frac 32\right)\varphi
\]
i.e.
\[
  \varphi_{yy} + \Bigl(\frac 6y + y\Bigr) \varphi_y
  +\frac 6{y^2}\varphi = 2(k-1)\varphi.
\]
Substitution: let \(\varphi(y) = y^{-2} \chi_k(y)\).  Then \( \chi_k\) satisfies the equation
\[
  \chi_k'' + \left(\frac 2y + y\right)  \chi_k' = 2k\, \chi_k.
\]
For every \emph{real} \(k>0\) there is a unique solution with \( \chi_k(0)=1\),
\( \chi_k'(0)=0\).  This solution is monotone increasing and for large \(y\) has
the expansion
\[
  \chi_k(y)=C_k y^{2k}+o(y^{2k})\qquad (y\to\infty).
\]
It is given by the series expansion
\begin{equation}
  \label{eq-phik-real}
  \chi_k(y) = \sum_{n=0}^\infty
  \frac{k(k-1)\cdots (k-n+1)}{n! (2n+1)!!}\,{y^{2n}},
\end{equation}
where \((2n+1)!! \stackrel{\rm def}{=} 1\cdot3\cdot5\cdot7\cdots(2n+1)\).  
This defines \(\varphi_k\) for all real \(k\).
We will only need these functions for integer values of \(k\), in which case \(\chi_k\) is a polynomial, and \(\varphi_k(y) = y^{-2}\chi_k(y)\) is given by
\begin{equation}\label{eq-phik}
  \varphi_k(y) = y^{-2}\sum_{n=0}^k \binom{k}{n}\frac{y^{2n}}{(2n+1)!!}.
\end{equation}
There is a second solution \(\hat\chi_k\) that satisfies 
\[
  \hat\chi_k(y) = e^{-y^2/2+o(y^2)} \qquad(y\to\infty).
\]
At \(y=0\) this solution is singular,
\[
  \hat\chi_k(y) = \frac{C}y + \cO(y) \qquad (y\to 0).
\]

\subsection{Proof of Lemma \ref{lem-g}}
\label{sec-g-exists}
The homogeneous equation \( 6\gamma\varphi - \cL\varphi = 0 \) has solutions of the
form
\[
  \varphi = C \, \varphi_k^1(y) + B  \, \psi_k^1(y), \qquad C,B \in \R
\]
where \( \varphi_k^1(y)\) and \(\psi_k^1\) are solutions with
\[
  \varphi_k^1(y) =
  \begin{cases}
    y^{-2} & (y\to0) \\
    \cO(y^{4k-5} ) & (y\to\infty)
  \end{cases}
\]
and
\[
  \psi_k^1(y) =
  \begin{cases}
    y^{-3} & (y\to0) \\
    \cO (e^{-y^2/2+o(y^2)}) & (y\to\infty).
  \end{cases}
\]
Since \(y=0\) is a regular singular point for the differential equation
\(6\gamma g-\cL g=G(y) = y^{-7} + y^{4k-7}\), one look for the solution in the form
of a power series.  From
\begin{equation}
  \label{eq-L-of-yr}
    (6\gamma-\cL)[y^r] = -\frac 12(r+2)(r+3) y^{r-2} + \frac 12 (4k-7-r) y^r
\end{equation}
it follows that \eqref{eq-g} has a particular solution of the form
\[
  g_{0p}(y) = C_0 y^{-5}P_0(y^2)+ C_1 y^{-3}\log(y)\, P_1(y^2),
\]
where \(P_j(y^2)\) are power series in \(y^2\) with \(P_j(0)=1\).  The logarithmic
term appears because \(r=-3\) is one of the characteristic exponents.  The
coefficient \(C_0\) is obtained by substitution in the equation.  One finds
\(C_0=-\frac 13\).

Every solution \(\varphi \) of the homogeneous equation satisfies,
\(\varphi = \cO(y^{-3}) = o(g_{0p})\) as \(y \to 0\), and therefore every solution
\(g\) of the inhomogeneous equation satisfies
\begin{equation}
  \label{eq-inhom-at-zero}
    g = g_{0p} + \cO(y^{-3}) = -\frac 13 y^{-5} + \cO(y^{-3}\log y), \qquad \mbox{as}\,\,\, y \to 0.
\end{equation}

The differential equation \(6\gamma g - \cL g= G\) has an irregular singular point at
\(y=\infty\), so we cannot use the power series method.  Instead, we obtain a
solution using sub and super solutions.  For any \(m\in\R\) the functions
\(g_\pm (y) = y^{4k-7}\pm m y^{4k-9}\) satisfy
\[
  (6\gamma-\cL) g_\pm
  = \Bigl(-\frac12(4k-5)(4k-4) \pm m\Bigr)y^{4k-9} + \cO(y^{4k-11})
  \qquad (y\to\infty).
\]
For \(m>\frac12(4k-5)(4k-4)\) it follows that \(g_-<g_+\) are sub and super solutions
for \(6\gamma g-\cL g=G\) on the interval \([y_0, \infty)\), if \(y_0\) is large
enough.  Hence there is a particular solution \(g_{\infty p}\) satisfying
\[
  g_{\infty p}(y) = y^{4k-7} + \cO(y^{4k-9}) \qquad (y\to\infty).
\]
At \(y=0\) all solutions satisfy \eqref{eq-inhom-at-zero} so \(g_{\infty p}\) also
satisfies \(g_{\infty p}(y) = -\frac13 y^{-5} + \cO(y^{-3}\log y)\).  The general solution
of the non-homogeneous equation \eqref{eq-g} is then of the form
\(g:= g_{\infty p} + C \, \varphi_k^1 + B \, \psi_k^1\), for \(C, B \in \R\).
However, the boundary condition \(g(y)=y^{4k-5}+o(y^{4k-5})\) as \(y\to\infty\)
restricts \(C=0\).  One concludes that \(g_B:= g_{\infty p} + B\, \psi_k^1\),
\(B \in \R\) is an one parameter set of solutions to \eqref{eq-g} which satisfies the
conditions of our lemma, thus finishing the proof.

\subsection{The Alencar solution}
\label{sec-Alencar}
\begin{lemma}
\label{lemma-alencar}
Let \(W:[0,\infty) \to\R\) be the solution of
\[
  \frac{W_{zz}}{1+W_z^2} + \frac 3z W_z - \frac{3}{W} = 0,
  \qquad W(0)=1, \quad W'(0)=0.
\]
Then \(W_{zz}>0\) and \( 0\leq W - zW_z \leq 1  \) for all \(z\geq 0\).

For large \(z\) the solution
\(W(z)\) has the expansion
\begin{equation}\label{eq-Alencar-expansion}
  W = z + \frac{\Gamma_2}{z^2} + \frac{\Gamma_3}{z^3} + \frac{\Gamma_5}{z^5} + \cdots
\end{equation}
for certain coefficients \(\Gamma_i\in\R\).
\end{lemma}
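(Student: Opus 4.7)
The plan is to prove the three claims in the order convexity, asymptotic expansion, two-sided bound, since convexity and the bound on $\Phi(z) := W(z) - zW_z(z)$ are linked through $\Phi'(z) = -zW_{zz}(z)$. First I would compute $W_{zz}(0) = 3/4$ by writing $W_z(z) = W_{zz}(0)\,z + o(z)$ as $z \to 0$, so that $3W_z/z \to 3W_{zz}(0)$ in the ODE, yielding $W_{zz}(0) + 3W_{zz}(0) = 3$. Globally, I would introduce the auxiliary function $\Psi(z) := W(z)^2 - z^2$ and, by clearing denominators in the ODE, derive the identity
\begin{equation*}
  W_{zz}(z) \;=\; -\frac{3\bigl(1 + W_z(z)^2\bigr)}{2\,z\,W(z)}\,\Psi'(z),
\end{equation*}
so that $W_{zz} > 0$ is equivalent to $\Psi$ being strictly decreasing. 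The initial data give $\Psi(0) = 1$, $\Psi'(0) = 0$, $\Psi''(0) = 2W_{zz}(0) - 2 = -1/2 < 0$, hence $\Psi$ decreases near $0$. A first interior local minimum $z_* > 0$ of $\Psi$ would force $\Psi'(z_*) = 0$ (equivalently $W(z_*)W_z(z_*) = z_*$, hence $W_{zz}(z_*)=0$) and $\Psi''(z_*) \geq 0$ (equivalently $W_z(z_*) \geq 1$), which together force $\Psi(z_*) \leq 0$; ruling this out reduces to showing that the trajectory never crosses the Simons cone $W = z$, which I handle by the phase-plane analysis below.

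For the expansion, the substitution $s := \log z$, $R(s) := W(z)/z$ reduces the ODE to the autonomous second order equation
\begin{equation*}
  R'' + R' \;=\; \frac{3\bigl(1 + (R+R')^2\bigr)\bigl(1 - R^2 - R R'\bigr)}{R},
\end{equation*}
with equilibrium $R \equiv 1$ corresponding to the Simons cone. Linearization at this equilibrium is $r'' + 7r' + 12 r = 0$ with eigenvalues $\lambda = -3, -4$, so the stable manifold theorem produces a two-parameter family of orbits of the form $R(s) = 1 + \alpha e^{-3s} + \beta e^{-4s} + O(e^{-6s})$ as $s \to +\infty$. Iteratively matching the nonlinear terms of the ODE then produces the full expansion; the key structural point is that quadratic combinations of $e^{-3s}$ and $e^{-4s}$ generate forcing only at rates $e^{-6s}, e^{-7s}, e^{-8s}$, no $e^{-5s}$ term is ever generated, and the linear operator has no eigenvalue at $-5$, so the coefficient of $z^{-4}$ in $W-z$ vanishes. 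Translating back to $z$ gives $W(z) = z + \Gamma_2 z^{-2} + \Gamma_3 z^{-3} + \Gamma_5 z^{-5} + O(z^{-6})$, which is \eqref{eq-Alencar-expansion}. The values of $\Gamma_2$ and $\Gamma_3$ are determined by the unique orbit connecting the singular boundary $R \to +\infty$ as $s \to -\infty$ (forced by $W(0) = 1$) to the equilibrium $R \equiv 1$.

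Given convexity and the expansion, the bound is immediate: $\Phi$ is strictly decreasing on $[0,\infty)$ with $\Phi(0) = 1$, and the expansion gives $\Phi(z) = 3\Gamma_2 z^{-2} + O(z^{-3}) \to 0$ as $z \to \infty$, so $0 \leq \Phi(z) \leq 1$ everywhere. The main obstacle is the global convexity, which reduces to the qualitative claim that the phase-plane trajectory stays in $\{R > 1\}$ for all $s$, equivalently that the Alencar desingularization lies strictly above the Simons cone; this also forces the sign $\Gamma_2 > 0$ used in the terminal value of $\Phi$. The cleanest route I envision is to show, by a monotonicity analysis of the autonomous system in the $(R, R')$ plane, that the incoming orbit from $R = +\infty$ enters the region $\{R > 1,\ R' < 0\}$ and remains there until asymptoting to $(1,0)$, which simultaneously rules out any crossing of the cone and any interior critical point of $\Psi$; implementing this carefully is the technical heart of the proof.
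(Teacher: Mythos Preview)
Your approach is correct in outline and reaches the same structural conclusions as the paper, but the route differs in two places worth noting.

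For convexity and the bound on $W-zW_z$, the paper simply cites Vel\'azquez \cite[Prop.~2.2]{Velaz} rather than proving them. Your reduction via $\Psi=W^2-z^2$ and the identity $W_{zz}=-\tfrac{3(1+W_z^2)}{2zW}\Psi'$ is a nice self-contained alternative; it correctly reduces convexity to the qualitative fact that the Alencar orbit stays strictly above the cone, which you defer to phase-plane analysis. That deferral is exactly the content of Vel\'azquez' proposition, so either way one ends up invoking the same monotonicity in the phase plane.

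For the expansion, the paper works with the first-order system in $(P,Q)=(W_z,\,z/W)$ rather than your second-order scalar equation for $R=W/z$; these are the same dynamics in different coordinates, and both linearizations at the cone give eigenvalues $-3,-4$. The substantive difference is that the paper invokes Poincar\'e's analytic linearization theorem (valid here because $-3,-4$ are in the Poincar\'e domain and non-resonant) to conclude that every orbit converging to $(1,1)$ is a \emph{convergent} power series in $z^{-3}$ and $z^{-4}$; this immediately explains the absence of a $z^{-4}$ term in $W-z$, since $5\notin\{3l+4m:l,m\ge0,\ l+m\ge1\}$. Your iterative-matching argument recovers the same structure formally and your reasoning for the missing $e^{-5s}$ term is correct, but to upgrade from an asymptotic expansion to the convergent one the paper asserts you would still need to appeal to Poincar\'e's theorem or an equivalent normal-form result. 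Once you add that citation, the two proofs are essentially equivalent.
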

\begin{proof}
  The differential equation for \(W\) has been thoroughly studied.  In particular,
  \(W_{zz}>0\) and \(W>zW_z\) were shown by Velázquez in \cite[Prop.~2.2]{Velaz},
  (\(B''(u)>0\) and \(G_a(r)<0\) in his notation).  Here we prove that \(W(z)\) has
  the stated asymptotic expansion.  Let
  \[
    P=W_z\text{ and } Q = \frac{z}{W}.
  \]
  Then \((P,Q)\) as a function of \(\log z\) satisfy an automonomous system of
  differential equations,
  \begin{equation}\label{eq-PQ-Alencar-system}
    \left\{
      \begin{aligned}
        zP_z & = 3\bigl(1+P^2\bigr)(Q-P) \\
        zQ_z & = P-P^2Q
      \end{aligned}
    \right.
  \end{equation}
  This system has two fixed points, the origin \((0,0)\) and the point \((1,1)\).

  The origin corresponds to the boundary condition \(W_z=0, z=0\), while the fixed
  point corresponds to the Simons cone on which \(W=z\) and \(W_z=1\).

  The matrix of the linearization at \((0,0)\) is
  \(
    \left(\begin{smallmatrix}
      1 & 0 \\3 & - 3
    \end{smallmatrix}
  \right) \).  Its eigenvalues are \(\lambda_1=+1\) an \(\lambda_2=-3\).  The
  eigenvector corresponding to the unstable eigenvalue is \(\binom{4}{3}\).  The
  unique orbit in the unstable manifold of the origin is the Alencar solution.  It
  approaches the fixed point \((1,1)\) as \(z\to\infty\).  The matrix of the
  linearization at \((1,1)\) is
\(\left(    \begin{smallmatrix}
      -1 & -1 \\ 6 & -6
    \end{smallmatrix}
  \right)\) with eigenvalues/vectors \(\lambda_1 = -3, \vec v_{1}= \binom{1}{2}\) and
  \(\lambda_2=-4\), \(\vec v_{2} = \binom 13\).  The eigenvalues are both negative
  and they satisfy the ``no resonance'' condition, i.e.~neither eigenvalue is an
  integer multiple of the other.  This implies that there is a real analytic
  conjugacy of the nonlinear system~\eqref{eq-PQ-Alencar-system} near the fixed point
  \((1,1)\) with the linearization (see the chapter on normal forms and Poincaré's
  theorem in \cite{Arnold88}).  The general solution of the linear system is
  \[
    C_1 z^{-3}\binom{1}{2} + C_2 z^{-4}\binom{1}{3} =
    \begin{pmatrix}
      C_1z^{-3}+ C_2z^{-4} \\
      2C_1z^{-3}+ 3C_2z^{-4}
    \end{pmatrix}
    .
  \]
  This in turn implies that all solutions of~\eqref{eq-PQ-Alencar-system} that
  converge to \((1,1)\) are convergent power series in \(z^{-3}\) and \(z^{-4}\).  In
  particular, \(1/Q = W/z\) has an expansion of the form
  \[
    \frac Wz = 1 + C_3 z^{-3} + C_4z^{-4} + C_6z^{-6}+ C_7z^{-7}+\cdots
    =1+\sum_{l,m\geq1} C_{l,m} z^{-3l-4m}.
  \]
  Therefore \(W(z)\) satisfies
  \[
    W = z + C_3 z^{-2} + C_4z^{-3} + C_6z^{-5}+ C_7z^{-6}+\cdots =z+\sum_{l,m\geq1}
    C_{l,m} z^{1-3l-4m}.
  \]
  So if we set \(\Gamma_{m} = C_{m+1}\) we have proved the expansion
  \eqref{eq-Alencar-expansion}
\end{proof}



\end{document}